\newtheorem{theoremalph}{Theorem}
\newtheorem*{Main Theorem}{Main Theorem}
\newtheorem{Theorem}{Theorem}[section]
\newtheorem*{Theorem A}{Theorem A}
\newtheorem*{Theorem A'}{Theorem A'}
\newtheorem*{Theorem B'}{Theorem B'}
\newtheorem*{Thm}{Theorem}
\newtheorem{Definition}[Theorem]{Definition}
\newtheorem{Proposition}[Theorem]{Proposition}
\newtheorem{Lemma}[Theorem]{Lemma}
\newtheorem{Question}{Question}
\newtheorem{Remark}{Remark}
\newtheorem{Remark-numbered}{Remark}
\newtheorem*{Claim}{Claim}
\newtheorem{Claim-numbered}{Claim}
 \def\NN{{\mathbb N}} 
\def\PP{{\mathbb P}}
 \def\RR{{\mathbb R}} 
\def\TT{{\mathbb T}}
  \def\cG{{\cal G}}  
\def\cB{{\cal B}}   \def\cN{{\cal N}} 
\def\cC{{\cal C}}    \def\cU{{\cal U}}
\def\cE{{\cal E}}    
\def\cF{{\cal F}}   \def\cR{{\cal R}} \def\cX{{\cal X}}
\newcommand{\sing}{{\operatorname{Sing}}}
\newcommand{\orb}{\operatorname{Orb}}
\def\dim{\operatorname{dim}}
\def\ind{\operatorname{ind}}
\def\Sing{\operatorname{Sing}}
\def\orb{\operatorname{Orb}}
\def\ud{\operatorname{d}}
\def\e{{\varepsilon}}
\def\det{\operatorname{det}}
\def\wh{\widehat}
\begin{document}

\title{{On the notions of singular domination and (multi-)singular hyperbolicity}}

\author{Sylvain Crovisier, Adriana da Luz,  Dawei Yang and Jinhua Zhang\footnote{S.C was  partially supported by  the ERC project 692925 \emph{NUHGD}.
D.Y  was partially supported by NSFC 11671288, 11822109,  11790274. J. Z was partially supported by the starting grant from Beihang University and by  the ERC project 692925 \emph{NUHGD}.}}


\maketitle

\begin{flushright}
\it To the memory of Shantao Liao.
\end{flushright}

\begin{abstract}	
The properties of uniform hyperbolicity and dominated splitting have been introduced to study the stability of the dynamics of diffeomorphisms.
One meets difficulties when one tries to extend these definitions to vector fields and Shantao Liao has shown that it is more relevant to consider the
linear Poincar\'e flow rather than the tangent flow in order to study the properties of the derivative.

In this paper we define the notion of singular domination, an analog of the dominated splitting for the linear Poincar\'e flow
which is robust under perturbations.
Based on this, we give a new definition of multi-singular hyperbolicity which is equivalent to the one recently introduced by Bonatti-da Luz in~\cite{BdL}.
The novelty of our definition is that  it does not involve the blowup of the singular set and the renormalization cocycle of the linear flows. 
\end{abstract}

\section{Introduction}
The stability and the robust properties of a dynamical system are often associated to invariant structures on the tangent bundle.
For instance the \emph{uniform hyperbolicity}~\cite{A,S} characterizes the structural stability~\cite{M2,H},
but various forms of hyperbolicity have been proposed to investigate other robust properties, such as Liao's star property~\cite{liao-star} or robust transitivity.

One of the weakest hyperbolicity is the notion of \emph{dominated splitting} that appeared in the works of Liao~\cite{liao-domination}, Ma\~n\'e~\cite{M1}, Pliss~\cite{P}.
For diffeomorphisms this occurs once the system is robustly transitive~\cite{BDP}.
In the flow case there exist robustly transitive systems which do not admit any dominated splitting of the tangent flow (see Proposition~\ref{p.robust-transitive-example}),
but for another linear flow, defined by Liao~\cite{liao-poincare}, and called \emph{linear Poincar\'e flow}.

For flows admitting singularities, a direct generalization of the hyperbolicity or domination may not exist, or may not persist under small perturbations (see Proposition~\ref{p.robust-domination-imply-singular domination} below). In dimension 3, \cite{MPP} have defined  \emph{singular hyperbolicity} to make the hyperbolicity of singularities coherent with the hyperbolicity of the periodic orbits, as it occurs inside the Lorenz attractor. In higher dimension, the singular hyperbolicity is not compatible with the coexistence of singularities with different stable dimensions. In order to characterize star systems, such as the $5$-dimensional example~\cite{dL}, a more general property called \emph{multi-singular hyperbolicity}
has been recently introduced by Bonatti and da Luz~\cite{BdL}: the definition (presented in section~\ref{s.equi-definition}) involves in blow up at the singularities,
extended linear Poincar\'e  flow and rescalings by certain dynamical cocyles.

The aim of this text is to give an alternative definition of the multi-singular hyperbolicity without using cocycles and blowup.
To that purpose, we first define and investigate the notion of singular domination for the linear Poincar\'e flow. 
\medskip

Throughout this paper, we consider the set $\cX^1(M)$  of $C^1$-vector fields on a closed manifold $M$.
Given $X\in\cX^1(M)$, we denote by $(\varphi_t)_{t\in\RR}$ the flow generated by $X$ and by $\sing(X)$ the set of singularities.
Its derivative induces a linear flow $(D\varphi_t)$ on the tangent bundle $TM$.
One can also consider the normal bundle $\cN|_{M\setminus\sing(X)}$
obtained as the quotient of the tangent bundle by the flow direction $\RR.X$ on $M\setminus \sing(X)$:
the tangent flow induces the linear Poincar\'e flow $(\Psi_t)$ on the normal bundle (see also section~\ref{ss.poincare}).

The notion of dominated splitting may be defined in the general setting of linear flows:
let us consider a linear bundle $\cB\to \Lambda$ over a space $\Lambda$
and a linear flow $(A_t)$ on $\cB$ which extends a flow $(\varphi_t)$ on $\Lambda$.
An invariant splitting $\cB=\cE\oplus \cF$ into linear subbundles with constant dimension  is dominated for $(A_t)$
if there exist $\eta,T>0$ such that
$$\|A_t|_{\cE(x)}\|\cdot\|A_{-t}|_{\cF(\varphi_t(x))}\|<e^{-\eta t} \quad \textrm{ for any $x\in \Lambda$ and $t>T$}.$$ 
The dimension $i=\dim(\cE)$ is called \emph{index} of the splitting.

The bundle $\cE$ is \emph{uniformly contracted} by $(A_t)$ if there are $\eta,T>0$
such that
$$\|A_{t}|_{\cE(x)}\|<e^{-\eta t} \quad \textrm{ for any $x\in \Lambda$  and $t>T$}.$$
And $\cF$ is \emph{uniformly expanded} if there are $\eta,T>0$ such that
$\|A_{-t}|_{\cF(x)}\|<e^{-\eta t}$ for $x\in \Lambda$, $t>T$.

If the tangent flow has a dominated splitting
$TM|_\Lambda=E^{ss}\oplus F$ with $E^{ss}$ uniformly contracted over an invariant set $\Lambda$,
it is well-known that any point $x\in \Lambda$ admits a well defined stable manifold $W^{ss}(x)$ tangent to $E^{ss}(x)$.
Similarly for a splitting $TM|_\Lambda=E\oplus E^{uu}$ with $E^{uu}$ uniformly expanded,
any point of $\Lambda$ admits an unstable manifold $W^{uu}(x)$ tangent to $E^{uu}(x)$.

\paragraph{a-- Singular domination.}
It is classical that the existence of a dominated splitting for the linear Poincar\'e flow on a non-singular invariant compact  set is a robust property~\cite[Appendix B.1]{BDV}.
But the domination for linear Poincar\'e flow over a set containing singularities may not be preserved after small perturbations.
This motivates the following stronger notion.

 \begin{Definition}\label{d.singular-domination}
Let $X\in \cX^1(M)$ and $\Lambda$ be an invariant compact set.
A $(\Psi_t)$-invariant decomposition $\cN|_{\Lambda\setminus\Sing(X)}=\cN_1\oplus\cN_2$ is a \emph{singular dominated splitting} if 
\begin{enumerate}[i.]
 	\item $\cN_1\oplus\cN_2$ is dominated\footnote{Since $\Lambda\setminus\Sing(X)$ is not compact, one needs to specify a metric on
	$\cN|_{\Lambda\setminus\Sing(X)}$ for which the definition of domination holds: one considers here the quotient metric induced by any Riemannian metric on $M$.};
 	\item  at each singularity $\sigma\in\Lambda\cap\Sing(X),$
 		\begin{itemize}
 		\item either there exists a dominated splitting of the form $T_\sigma M=E^{ss}\oplus F$ for $(D\varphi_t)_{t\in\RR}$ such that $E^{ss}$ is uniformly contracting,
		$\dim(E^{ss})=\dim(\cN_1)$ and $W^{ss}(\sigma)\cap \Lambda=\{\sigma \}$,
 		\item or there exists a dominated splitting of the form $T_\sigma M=E\oplus E^{uu}$ for $(D\varphi_t)_{t\in\RR}$ such that $E^{uu}$ is uniformly expanding,
		$\dim(E^{uu})=\dim(\cN_2)$ and $W^{uu}(\sigma)\cap \Lambda=\{\sigma \}$.
 		\end{itemize}
 \end{enumerate}
\end{Definition}
The links between singular domination of the linear Poincar\'e flow and domination of the tangent flow are discussed in Section~\ref{s.tangent}:
the existence of a dominated splitting for the tangent flow correspond to the special case where one of the bundles is uniformly contracted or expanded
(see Propositions~\ref{p.uniform-bundle} and~\ref{p.domination-criterion}).

Any dominated splitting for a continuous linear cocycle over a compact space is robust under perturbations.
Due to the singularity, this is not always true for arbitrary dominated splitting of the linear Poincar\'e flow.
However the next result shows that the existence of a singular domination is a robust property.
\begin{theoremalph}~\label{p.robust-singular-domination}
Let $X\in \cX^1(M)$ and $\Lambda$ be a compact invariant  set admitting a singular dominated splitting of index $i$. Then there exist neighborhoods $\cU$ of $X$ and $U$ of $\Lambda$ such that for any $Y\in\cU$, the maximal invariant set in $U$ admits a singular dominated splitting of index $i$.
\end{theoremalph}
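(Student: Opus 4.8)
The plan is to combine the classical robustness of dominated splittings away from singularities with a separate, quantitative control near each singularity coming from the hyperbolicity hypothesis in item (ii) of Definition~\ref{d.singular-domination}. First I would fix $X$, $\Lambda$ and the singular dominated splitting $\cN|_{\Lambda\setminus\Sing(X)}=\cN_1\oplus\cN_2$ of index $i$. Since $M$ is closed and $\Lambda$ is compact, $\Sing(X)\cap\Lambda$ is finite; write it as $\{\sigma_1,\dots,\sigma_k\}$. For each $\sigma_j$ the hypothesis provides a hyperbolic dominated splitting of $(D\varphi_t)$ at $\sigma_j$ (of the $E^{ss}\oplus F$ or $E\oplus E^{uu}$ type) together with the transversality condition $W^{ss}(\sigma_j)\cap\Lambda=\{\sigma_j\}$ (resp. $W^{uu}(\sigma_j)\cap\Lambda=\{\sigma_j\}$). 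Because hyperbolicity of a rest point is an open condition, for $Y$ close to $X$ each $\sigma_j$ continues to a hyperbolic singularity $\sigma_j(Y)$ with a splitting of the same type and the same dimensions, depending continuously on $Y$.

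The second step is to build the neighborhoods. Choose small pairwise-disjoint closed balls $B_j$ around the $\sigma_j$ and a compact neighborhood $U$ of $\Lambda$ such that $\Lambda$ is the maximal invariant set in $U$ (using an isolating block, shrinking if necessary). On the ``non-singular part'' $K:=U\setminus\bigcup_j \Int(B_j)$ one has a compact set on which the linear Poincaré flow of $X$ is a genuine continuous linear cocycle admitting a dominated splitting of index $i$ over $\Lambda\cap K$; by the standard robustness of dominated splittings for continuous cocycles over compact invariant sets \cite[Appendix B.1]{BDV}, after shrinking $U$ and $\cU$, for every $Y\in\cU$ the maximal invariant set $\Lambda_Y$ of $Y$ in $U$ has, on $\Lambda_Y\setminus\bigcup_j\Int(B_j)$, a dominated splitting $\cN_1^Y\oplus\cN_2^Y$ of index $i$ for the linear Poincaré flow $(\Psi_t^Y)$, with uniform constants $\eta,T$. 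The issue is that orbits in $\Lambda_Y$ may pass through the balls $B_j$, where the linear Poincaré flow blows up; one must propagate the domination across these passages.

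The core step is exactly this propagation. Near a singularity $\sigma_j$ of, say, $E^{ss}\oplus F$ type, an orbit segment of $Y$ entering $B_j$ must stay close to the local stable manifold upon entry and close to a suitable set on exit; the transversality condition $W^{ss}(\sigma_j)\cap\Lambda=\{\sigma_j\}$ forces that such a segment, if its endpoints lie in $\Lambda_Y\cap K$, spends only a uniformly bounded time in $B_j$ unless it accumulates the singularity, and in the latter case the uniform contraction of $E^{ss}$ at $\sigma_j(Y)$ (which dominates $F$) compensates the blow-up of the Poincaré flow: one checks that the ratio $\|\Psi_t^Y|_{\cN_1^Y}\|\cdot\|\Psi_{-t}^Y|_{\cN_2^Y}\|$ accumulated along the passage is bounded by $e^{-\eta' t}$ for a uniform $\eta'>0$, because on the normal bundle the relevant rates are the differences of Lyapunov-type exponents of $D\varphi_t$, and $E^{ss}$ sits strictly below $F$. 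Here it is convenient to compare the linear Poincaré flow with the tangent flow via the projections of Propositions~\ref{p.uniform-bundle} and~\ref{p.domination-criterion}: on $B_j$ one uses the splitting of $T_\sigma M$ and its continuous extension to get candidate bundles $\cN_1^Y,\cN_2^Y$ that glue with the ones produced on $K$. Once the candidate invariant splitting is defined on all of $\Lambda_Y\setminus\Sing(Y)$ and domination is verified both on $K$ and across the $B_j$, uniformity of the constants on the whole set follows by a standard covering/concatenation argument (a cocycle is dominated iff the product of the relevant norms is $<1$ after a uniform time, and such estimates concatenate). Finally item (ii) of Definition~\ref{d.singular-domination} holds for $Y$ by the persistence of the hyperbolic splittings at $\sigma_j(Y)$ and the persistence of the transversality $W^{ss/uu}(\sigma_j(Y))\cap\Lambda_Y=\{\sigma_j(Y)\}$ (again an open condition once $U$ is an isolating block and the $\sigma_j$ are the only singularities in $U$, shrinking $\cU$ if needed).

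The main obstacle is the core step: controlling the linear Poincaré cocycle along orbit segments that come arbitrarily close to a singularity. This is precisely the phenomenon that makes plain domination of the linear Poincaré flow non-robust, so one must use the full strength of item (ii) — both the hyperbolicity and, crucially, the no-return conditions $W^{ss}(\sigma)\cap\Lambda=\{\sigma\}$ and $W^{uu}(\sigma)\cap\Lambda=\{\sigma\}$ — to rule out the bad passages and to obtain the compensating exponential estimate on the remaining ones. I expect the cleanest implementation to pass through the tangent-flow comparison results quoted above, together with a Poincaré-flow version of the ``hyperbolic times near a singularity'' estimate.
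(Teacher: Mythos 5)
Your outline (classical robustness away from the singularities, a separate control near them, persistence of the conditions at the rest points) points in the right direction, but the step you yourself call the core is not actually carried out, and parts of its justification are wrong. First, the dichotomy ``a segment with endpoints in $\Lambda_Y\cap K$ spends only a uniformly bounded time in $B_j$ unless it accumulates the singularity'' is false: the hypothesis $W^{ss}(\sigma)\cap\Lambda=\{\sigma\}$ does not bound the time spent near $\sigma$ (orbits of $\Lambda_Y$ may enter along center-stable directions and linger arbitrarily long), so arbitrarily long passages are unavoidable. Second, the claimed bound $\|\Psi^Y_t|_{\cN_1^Y}\|\cdot\|\Psi^Y_{-t}|_{\cN_2^Y}\|\le e^{-\eta' t}$ along a passage does not follow from ``$E^{ss}$ sits strictly below $F$'': whether the linear Poincar\'e flow is dominated along a passage depends on the position of the flow line during the passage. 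If $\RR Y(x)$ approaches $E^{ss}(\sigma)$, the normal spaces pick up directions of $F(\sigma)$ and domination fails --- this is exactly the mechanism of the non-robust example following Proposition~\ref{p.robust-domination-imply-singular domination}. The real content of the theorem is therefore to show that, for every $Y$ close to $X$, the flow directions $\RR Y(x)$ over points of $\Lambda_Y$ accumulating on $\sigma$ converge only into the projectivization of $F(\sigma)$ (resp. $E(\sigma)$). This requires first making the escaping property $W^{ss}(\sigma_Y)\cap\Lambda_Y=\{\sigma_Y\}$ robust (you assert it is ``an open condition'', but it needs the compactness/continuity-of-local-strong-stable-manifolds argument of Lemma~\ref{l.robust-escaping}), and then a strong-stable cone argument; you never establish it, and without it neither your exponential estimate nor the ``gluing'' of $\cN_1^Y,\cN_2^Y$ across $B_j$ can be justified (an invariant splitting is determined along orbits, so it cannot be glued by hand; one needs invariant cone fields or a limit argument). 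Note also that $\Lambda_Y\setminus\bigcup_j\Int(B_j)$ is not invariant, so the robustness statement of \cite[Appendix B.1]{BDV} does not apply to it as written.

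The paper's proof avoids all passage estimates by compactifying: it works with the extended linear Poincar\'e flow $(\wh\Psi_t)$ on $\cG^1$ and the sets $K_Y$ obtained by adding to the closure of $\{\RR Y(x): x\in\Lambda_Y\setminus\sing(Y)\}$ the projectivizations $K^{\pm}(\sigma)$ of $F(\sigma)$ or $E(\sigma)$ at the singularities. The escaping hypotheses, robustified by Lemma~\ref{l.robust-escaping}, are used through a strong-stable cone argument to prove that $K_Y$ is compact, invariant and varies upper semicontinuously in $Y$; then the classical robustness of dominated splittings for continuous linear cocycles over compact sets applies in one stroke. If you wish to salvage your direct route, the statement you must prove is precisely the directional control described above, and at that point you will essentially have reproduced the paper's Claim.
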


Conversely, one will show (see Proposition~\ref{p.robust-domination-imply-singular domination} below)
that if a vector field robustly admits a domination in a compact region for the linear Poincar\'e flow, then (under a very mild assumption)
the definition of singular domination holds on that compact region.

\paragraph{b-- Multisingular hyperbolicity.} We also introduce the following notion.

\begin{Definition}~\label{Def:multi-singular-hyperbolic}
Let $X\in \cX^1(M)$. An invariant compact set $\Lambda$ is \emph{multi-singular hyperbolic} if:
\begin{enumerate}[i.]
\item $\Lambda$ admits a singular dominated splitting $\cN^s\oplus \cN^u$;
\item there exist $\eta,T>0$ and a compact  isolating neighborhood $V$ of $\Lambda\cap\Sing(X)$ such that
$$ \|\Psi_t|_{\cN^s(x)}\|<e^{-\eta t} \textrm{ and } \|\Psi_{-t}|_{\cN^u(\varphi_t(x))}\|<e^{-\eta t}
	\text{ whenever $x,\varphi_t(x)\in \Lambda\setminus V$ and $t>T$.}$$
\item each singularity $\sigma\in\Lambda\cap\Sing(X)$ admits a dominated splitting $T_{\sigma} M=E^{ss}\oplus E^{c}\oplus E^{uu}$
with $\dim(E^{ss})=\dim(\cN^s)$, $\dim(E^{uu})=\dim(\cN^u)$ such that if $\rho^{ss}$, $\rho^{uu}$ denote the spectral radii of $D\varphi_1|_{E^{ss}}$ and
$D\varphi_{-1}|_{E^{uu}}$ and if $\rho^c$ is the eigenvalue of $D\varphi_1$ along $ E^c$, then
$$ \max(\rho^{ss},\rho^{uu})<\min(\rho^c,1/\rho^{c})<1.$$
\end{enumerate}
The dimension of $\dim(\cN^s)$ is uniquely defined and is called \emph{index} of $\Lambda$.
\end{Definition}
\begin{Remark}\rm
\begin{enumerate}
\item The multi-singular hyperbolicity we define here is literally different from the notion defined in~\cite{BdL}.
In fact we will see in section~\ref{s.equi-definition} that the two notions coincide (under some very mild assumption), so that we can keep using the same name.
\item Singularities satisfying (iii) are exactly the Lorenz-like singularities (see Section~\ref{s.preliminaries}.e).
In fact, in an invariant compact set satisfying the two first properties of the Definition~\ref{Def:multi-singular-hyperbolic},
the third property holds under a mild condition (see Proposition~\ref{p.lorenz-like-for-non-isolated-sing}).
\item For singularities $\sigma\in\Lambda$ such that $E^c(\sigma)$ is attracting  and   $W^u(\sigma)\cap \Lambda\setminus \{\sigma\}\neq \emptyset$,
the singular domination implies $W^{ss}(\sigma)\cap\Lambda=\{\sigma\}$. An analogous property holds when $E^c(\sigma)$ is expanding.
\end{enumerate} 	
\end{Remark}
 
The multi-singular hyperbolicity is an open property. 
\begin{theoremalph}~\label{thm.robustness-of-multi-singular-hyperbolicity}
Let $X\in \cX^1(M)$ and $\Lambda$ be a multi-singular hyperbolic set. Then there exist a $C^1$-neighborhood $\cU$ of $X$ and a neighborhood $U$ of $\Lambda$ such that the maximal invariant set of $Y\in\cU$ in $U$ is multi-singular hyperbolic.
 \end{theoremalph}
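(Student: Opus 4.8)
The plan is to deduce Theorem~\ref{thm.robustness-of-multi-singular-hyperbolicity} from the robustness of singular domination (Theorem~\ref{p.robust-singular-domination}) together with the robustness of the two additional conditions (ii) and (iii) in Definition~\ref{Def:multi-singular-hyperbolic}. Fix $X$ and a multi-singular hyperbolic set $\Lambda$ of index $i$. By Theorem~\ref{p.robust-singular-domination} there are neighborhoods $\cU_0$ of $X$ and $U_0$ of $\Lambda$ such that for every $Y\in\cU_0$ the maximal invariant set $\Lambda_Y$ of $Y$ in $U_0$ carries a singular dominated splitting $\cN^s_Y\oplus\cN^u_Y$ of index $i$; moreover the inspection of the proof of that theorem gives that these bundles vary continuously with $Y$ (in the appropriate sense over the non-singular part), and over $\Lambda_X=\Lambda$ they coincide with $\cN^s\oplus\cN^u$. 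The remaining task is to shrink $\cU_0$ and $U_0$ so that properties (ii) and (iii) persist.

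For property (iii): each singularity $\sigma\in\Lambda\cap\Sing(X)$ is Lorenz-like in the sense of the hyperbolicity of $D\varphi_t$, with the spectral gap $\max(\rho^{ss},\rho^{uu})<\min(\rho^c,1/\rho^c)<1$. This is a finite condition on finitely many hyperbolic (hence robust) singularities, each depending continuously on $Y$ by the implicit function theorem; the dominated splitting $E^{ss}\oplus E^c\oplus E^{uu}$ at $\sigma$ and the three quantities $\rho^{ss},\rho^{uu},\rho^c$ then vary continuously, so the strict inequalities, and the dimension matchings $\dim E^{ss}=\dim\cN^s_Y$, $\dim E^{uu}=\dim\cN^u_Y$, survive on a $C^1$-neighborhood. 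One must also handle the case where $\Lambda_Y$ contains a singularity not in $\Lambda$ (a singularity of $Y$ near $U_0$ that was not a limit of $X$): here one appeals to Proposition~\ref{p.lorenz-like-for-non-isolated-sing}, which says that under the two first properties the third is automatic under a mild condition — so it suffices to arrange (i) and (ii) robustly and invoke that proposition. Alternatively one simply notes that if $\Lambda$ has no singularity near which such new singularities could appear, the singular set of $\Lambda_Y$ stays in a small neighborhood of $\Sing(X)\cap\Lambda$.

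For property (ii): fix $\eta,T>0$ and the isolating neighborhood $V$ of $\Sing(X)\cap\Lambda$ witnessing (ii) for $X$. Choose a slightly smaller isolating neighborhood $V'\Subset V$ which is still an isolating neighborhood of $\Sing(Y)\cap\Lambda_Y$ for all $Y$ close to $X$ (possible since the singularities of $Y$ near $\Lambda$ lie near $\Sing(X)\cap\Lambda$). The content of (ii) is the uniform contraction of $\Psi^Y_t$ on $\cN^s_Y$ and uniform expansion on $\cN^u_Y$ along pieces of orbit that stay outside $V'$. Such a uniform-hyperbolicity estimate along an \emph{open, relatively compact} region of the normal bundle, for a \emph{continuous} linear cocycle, is a standard perturbation statement: it can be phrased as uniform hyperbolicity over the compact quotient space $(\Lambda_Y\setminus\Int V')$, which varies upper-semicontinuously with $Y$; the linear Poincaré flow $\Psi^Y$ and the bundles $\cN^s_Y,\cN^u_Y$ depend continuously on $Y$ there (being bounded away from $\Sing(Y)$), so the exponential estimates with constants $\eta/2$ and $2T$, say, hold for all $Y$ in a $C^1$-neighborhood, by the usual argument that hyperbolicity of a cocycle over a compact invariant set is robust under $C^0$-perturbations of the cocycle and upper-semicontinuous perturbations of the base. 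Taking $\cU=\cU_0\cap(\text{neighborhoods from (ii), (iii)})$ and $U=U_0$ (possibly shrunk) finishes the proof.

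The main obstacle is the bookkeeping around the singular set of the perturbed system: one must ensure that $\Sing(Y)\cap\Lambda_Y$ stays close to $\Sing(X)\cap\Lambda$, that no extra singularities spoil the index or the splitting dimensions, and that the ``staying outside $V$'' condition in (ii) can be transferred to a condition ``staying outside $V'$'' for the perturbation without losing the exponential control near $\partial V$; this last point requires that the domination (i), which holds on all of $\Lambda_Y\setminus\Sing(Y)$ including inside $V\setminus V'$, be used to bridge the estimates — precisely the interplay between (i) and (ii) that makes multi-singular hyperbolicity an open condition. All the other ingredients are continuity of hyperbolic continuations and robustness of uniform hyperbolicity over compact sets, which are classical.
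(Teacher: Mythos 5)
The reduction of property (i) and property (iii) is essentially fine (and matches the paper: Theorem~\ref{p.robust-singular-domination} gives (i), and since every singularity of $Y$ in a small neighborhood of $\Lambda$ is the hyperbolic continuation of a singularity of $X$, the spectral inequalities in (iii) persist by continuity). The gap is in your treatment of property (ii). You phrase it as ``uniform hyperbolicity of a continuous cocycle over the compact set $\Lambda_Y\setminus\Int V'$'' and invoke the standard robustness of hyperbolicity over compact invariant sets. But $\Lambda_Y\setminus\Int V'$ is not invariant: condition (ii) concerns orbit segments whose two endpoints lie outside $V$ but which may enter $V$ and spend an arbitrarily long time arbitrarily close to the singularities. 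Along such a passage the fibrewise contraction of $\cN^s$ by $\Psi_t$ is not available and can genuinely fail pointwise: near a singularity $\sigma$ whose center $E^c$ is expanding, the directions in $\Lambda$ accumulate on the projectivization of $E^{ss}\oplus E^c$, and over those directions only the rescaled quantity $\|D\varphi_t|_{L}\|\cdot\|\wh\Psi_t|_{\cN^s(L)}\|$ contracts (this is exactly what the Lorenz-like condition (iii) buys, cf.\ Proposition~\ref{Prop:lorenz-cocycle}); the unrescaled $\|\Psi_t|_{\cN^s}\|$ may grow during the passage. So no ``standard perturbation statement'' applies, and your closing remark that the domination (i) ``bridges the estimates'' inside $V\setminus V'$ is not an argument: domination alone cannot produce contraction across a passage near a singularity — one needs the spectral condition (iii) together with a quantitative control of the growth of $\|X\|$ (equivalently of $\|D\varphi_t|_{\RR X}\|$) between entry and exit of the neighborhood of $\sigma$.

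This is precisely where the paper does its real work: it argues by contradiction, taking $X_n\to X$, times $t_n\to\infty$ and points $x_n,\varphi_{t_n}(x_n)\notin V$ with $\|\Psi^{X_n}_{t_n}|_{\cN^s(x_n)}\|\ge e^{-t_n/n}$, passes to a limit invariant measure $\wh\mu$ for the extended flow on $\cG^1$, shows via a subadditivity lemma that $\int\log\|\wh\Psi_\tau|_{\cN^s}\|\,d\wh\mu\ge 0$, and then decomposes $\wh\mu$ into a regular part (handled by Proposition~\ref{p.hyperbolicity-of-erogodic-measure-in-multi-sing}), a part over singularities with escaping strong stable manifold (where $\cN^s$ coincides with $E^{ss}$), and a part over singularities with escaping strong unstable manifold, where the Lorenz-like condition plus an estimate showing that $\frac1{t_n}\sum_i\int_{I^i_n}\log\|D\varphi^{X_n}_\tau|_{\RR X_n}\|\,ds$ is almost nonnegative yields the contradiction. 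Your proposal contains no substitute for this step, so as written it does not prove the theorem; to repair it you would either have to reproduce such a measure-theoretic (or rescaled-cocycle) argument for the passages through $V$, or quote a statement that already encapsulates it.
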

 
One then naturally defines a \emph{multi-singular hyperbolic vector field} as a vector field whose chain-recurrent set is the union of muti-singular hyperbolic sets
and hyperbolic singularities.

From Theorem~\ref{thm.robustness-of-multi-singular-hyperbolicity}, the set of multi-singular vector fields is $C^1$-open. Moreover the definition implies easily that each periodic orbit is hyperbolic
(see Proposition~\ref{p.hyperbolicity-of-erogodic-measure-in-multi-sing}). As a consequence, a multi-singular hyperbolic vector field $X$ has the \emph{star property}, i.e. any vector field in a $C^1$-neighborhood of $X$
has all its periodic orbits and singularities hyperbolic.

The Section~\ref{s.equi-definition} below compares Definition~\ref{Def:multi-singular-hyperbolic}
with the definition of multi-singular hyperbolicity in~\cite{BdL}: in most of the cases they coincide
(see Theorem~\ref{thm.our-imply-bonatti-da-luz} and~\ref{thm.bonatti-da-luz-implies-our}).
It allows us to restate the results from~\cite{BdL} using Definition~\ref{Def:multi-singular-hyperbolic}.

\begin{Thm}[Bonatti-Da Luz]~\label{thm.bonatti-daluz}
The set of multi-singular hyperbolic vector fields is open and dense in the space of star vector fields (for the $C^1$-topology).
 \end{Thm}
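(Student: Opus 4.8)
The plan is to derive both halves of the statement from the results of Bonatti and da Luz~\cite{BdL}, combined with the comparison between Definition~\ref{Def:multi-singular-hyperbolic} and the definition of~\cite{BdL} carried out in Section~\ref{s.equi-definition}. For openness, let $X$ be multi-singular hyperbolic, so that $\cR(X)$ is a union of hyperbolic singularities and multi-singular hyperbolic sets; by compactness $\cR(X)$ is covered by finitely many pairwise disjoint isolating compact neighborhoods $U_1,\dots,U_k$, each containing either a hyperbolic singularity or a multi-singular hyperbolic maximal invariant set, and $M$ carries a filtration adapted to this decomposition. Since filtrations persist under $C^1$-perturbation, for every $Y$ in a suitable $C^1$-neighborhood $\cU$ of $X$ one has $\cR(Y)\subset\bigcup_j U_j$, and the maximal invariant set of $Y$ in each $U_j$ stays close to that of $X$; by persistence of hyperbolic singularities and by Theorem~\ref{thm.robustness-of-multi-singular-hyperbolicity} it is again a hyperbolic singularity or a multi-singular hyperbolic set. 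As any compact invariant subset of a multi-singular hyperbolic set inherits the singular dominated splitting and the uniform estimates of Definition~\ref{Def:multi-singular-hyperbolic}, the chain-recurrent set $\cR(Y)$ decomposes as required, so $Y$ is multi-singular hyperbolic; hence the class is $C^1$-open, in particular open inside the space of star vector fields (these vector fields being star by Proposition~\ref{p.hyperbolicity-of-erogodic-measure-in-multi-sing}).

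For density, fix a star vector field $X$ and a $C^1$-neighborhood $\cV$ of $X$. By the density theorem of~\cite{BdL}, $\cV$ contains a vector field $Z$ that is multi-singular hyperbolic in the sense of~\cite{BdL}. If the mild genericity condition occurring in Theorem~\ref{thm.bonatti-da-luz-implies-our} holds for $Z$, then that theorem shows $Z$ is multi-singular hyperbolic in the sense of Definition~\ref{Def:multi-singular-hyperbolic}. Otherwise one first perturbs $Z$ inside $\cV$ to $Z'$ satisfying this condition: the degenerate configurations it excludes --- typically spurious or non-transverse intersections of strong invariant manifolds of singularities with the chain-recurrent set --- can be removed by an arbitrarily $C^1$-small perturbation, and since multi-singular hyperbolicity in the sense of~\cite{BdL} is itself $C^1$-open, $Z'$ still has that property; then Theorem~\ref{thm.bonatti-da-luz-implies-our} applies to $Z'$. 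In all cases $\cV$ contains a multi-singular hyperbolic vector field, which gives density.

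The main difficulty is not in the above argument but in what it relies on, namely the equivalence of the two definitions established in Section~\ref{s.equi-definition}, and --- within the density step --- the perturbation that moves a~\cite{BdL}-multi-singular hyperbolic vector field into the regime where Definition~\ref{Def:multi-singular-hyperbolic} applies, without leaving the~\cite{BdL} class and without creating new degeneracies. This last point uses a connecting-lemma/transversality argument performed inside the invariant structures carried by the singular dominated splitting; once it and the comparison theorems are available, the theorem is a short synthesis of them with the (substantially harder) density result of~\cite{BdL} and with the robustness statement Theorem~\ref{thm.robustness-of-multi-singular-hyperbolicity}.
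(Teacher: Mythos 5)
Your argument is essentially the paper's own reading of this theorem: the paper states it as a restatement of the density result of~\cite{BdL}, justified by the comparison Theorems~\ref{thm.our-imply-bonatti-da-luz} and~\ref{thm.bonatti-da-luz-implies-our}, with openness coming from Theorem~\ref{thm.robustness-of-multi-singular-hyperbolicity} exactly as you argue (the paper asserts this openness right after Theorem~\ref{thm.robustness-of-multi-singular-hyperbolicity}, at the same level of detail as your filtration sketch). The one place where you deviate is the ``otherwise'' branch of your density step, which you even single out as the main difficulty: you treat the hypothesis of Theorem~\ref{thm.bonatti-da-luz-implies-our}, namely that $W^s(\sigma)\cap\Lambda\setminus\{\sigma\}$ and $W^u(\sigma)\cap\Lambda\setminus\{\sigma\}$ are nonempty, as a degeneracy to be removed by a connecting-lemma/transversality perturbation. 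That step is unnecessary, and as sketched it is the only shaky part of your write-up. It is unnecessary because for the sets relevant to the definition of a multi-singular hyperbolic vector field (chain recurrence classes) the hypothesis is automatic: if a hyperbolic singularity $\sigma$ lies in a chain class $\Lambda\neq\{\sigma\}$, a standard shadowing argument in a small neighborhood of $\sigma$ (an $\e$-pseudo-orbit with small jumps leaving, resp.\ entering, a small ball around $\sigma$ must exit near $W^u_{loc}(\sigma)$, resp.\ enter near $W^s_{loc}(\sigma)$, and the limit points of these exit/entry points belong to the class) shows both intersections are nonempty; and if $\Lambda=\{\sigma\}$, this trivial class is directly allowed as a hyperbolic singularity in the definition of a multi-singular hyperbolic vector field. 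It is shaky because the failure of an invariant manifold of $\sigma$ to meet $\Lambda$ is not a transversality defect, and a connecting-lemma surgery forcing such an intersection modifies the chain class itself, so nothing in your sketch guarantees that the perturbed field still satisfies either definition; fortunately that branch is vacuous and can simply be deleted.
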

 
The following question remains open:
 
 \begin{Question}[\cite{BdL}, Question 1]
Is any star vector field multi-singular hyperbolic?
\end{Question}

\paragraph{c-- Uniform and singular hyperbolicities.} We recall some classical notions.
\begin{Definition}~\label{Def:uniform-hyperbolicity}
Let $X\in \cX^1(M)$. An invariant compact set $\Lambda$ is \emph{uniformly hyperbolic} if:
\begin{enumerate}[i.]
\item $\Lambda$ admits a dominated splitting $TM|_{\Lambda}=E^s\oplus (\RR X) \oplus E^u$ for the tangent flow;
\item $E^s$ is uniformly contracted and $E^u$ is uniformly expanded.
\end{enumerate}
\end{Definition}
\begin{Definition}~\label{Def:singular-hyperbolicity}
Let $X\in \cX^1(M)$. An invariant compact set $\Lambda$ is \emph{singular hyperbolic} if:
\begin{itemize}
\item either $\Lambda$ admits a dominated splitting $TM|_{\Lambda}=E^s\oplus E^{cu}$ for the tangent flow,
$E^s$ is uniformly contracted and $E^{cu}$ is sectionally expanded: there are $\eta,T>0$
such that
$$|\operatorname{Jac}(D\varphi_{-t}|_F)| <e^{-\eta t}  \quad \textrm{ for any $x\in \Lambda$, any $t>T$ and any $2$-plane $F\subset E^{cu}(x)$}.$$
\item or $\Lambda$ admits a dominated splitting $TM|_{\Lambda}=E^{cs}\oplus E^{u}$ such that
$E^{cs}$ is sectionaly contracted and $E^{u}$ is uniformly expanded.
\end{itemize}
\end{Definition}

The multi-singular hyperbolicity generalizes these notions in the following sense
(the first property goes back to~\cite[Proposition 1.1]{D}).

\begin{theoremalph}\label{t.comparison}
Let $X\in\cX^1(M)$ and let $\Lambda$ be an invariant compact set such that   each singularity 
$\sigma\in\Lambda$ is hyperbolic and both $W^s(\sigma)\cap\Lambda\setminus \{\sigma\}$ are $W^u(\sigma)\cap\Lambda\setminus \{\sigma\}$ are non empty. Then:
\begin{enumerate}
\item $\Lambda$ is uniform hyperbolic if and only if $\Lambda$ is multi-singular hyperbolic and does not contain any singularity;
\item $\Lambda$ is singular hyperbolic if and only if $\Lambda$ is multi-singular hyperbolic and all singularities have the same index.
\end{enumerate}
\end{theoremalph}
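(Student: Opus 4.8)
The plan is to prove Theorem~\ref{t.comparison} by reducing both equivalences to the basic dictionary between singular domination of the linear Poincar\'e flow and domination of the tangent flow established in Section~\ref{s.tangent} (Propositions~\ref{p.uniform-bundle} and~\ref{p.domination-criterion}), together with the hypothesis that every singularity $\sigma\in\Lambda$ is accumulated from both sides, i.e. $W^s(\sigma)\cap\Lambda\setminus\{\sigma\}\neq\emptyset$ and $W^u(\sigma)\cap\Lambda\setminus\{\sigma\}\neq\emptyset$. This accumulation hypothesis is the crucial constraint: it forces, in the multi-singular hyperbolic case, that no singularity can satisfy the isolation conditions $W^{ss}(\sigma)\cap\Lambda=\{\sigma\}$ or $W^{uu}(\sigma)\cap\Lambda=\{\sigma\}$ appearing in Definition~\ref{d.singular-domination} unless the relevant strong (un)stable manifold is one-dimensional smaller than expected — so the dimensions $\dim(E^{ss})$, $\dim(E^{uu})$ of the splitting at $\sigma$ are pinned down, and in particular $\dim(E^c)=1$ with $E^c$ the flow direction $\RR X(\sigma)$ being impossible since $\sigma$ is a singularity, forcing a nontrivial center eigenvalue.

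For part (1): if $\Lambda$ is uniformly hyperbolic it contains no singularity (a hyperbolic singularity whose unstable set meets $\Lambda$ would contradict the continuity of the dominated splitting $E^s\oplus\RR X\oplus E^u$ near $\sigma$, where $X$ vanishes); then $TM|_\Lambda=E^s\oplus\RR X\oplus E^u$ projects to a dominated splitting $\cN^s\oplus\cN^u$ of the linear Poincar\'e flow with $\cN^s$ uniformly contracted and $\cN^u$ uniformly expanded, which is vacuously a singular dominated splitting (no singularities to check) and trivially satisfies conditions (ii) and (iii) of Definition~\ref{Def:multi-singular-hyperbolic} with $V=\emptyset$. Conversely, if $\Lambda$ is multi-singular hyperbolic with no singularity, then $\Sing(X)\cap\Lambda=\emptyset$, so the exceptional set $V$ can be taken empty, condition (ii) gives uniform contraction of $\cN^s$ and uniform expansion of $\cN^u$ over all of $\Lambda$, and by the standard correspondence (Proposition~\ref{p.uniform-bundle}) uniform contraction/expansion of a subbundle of the normal bundle lifts to a dominated splitting $E^s\oplus\RR X\oplus E^u$ of the tangent flow with $E^s=\cN^s$ uniformly contracted and $E^u$ uniformly expanded — this is the compact non-singular case, well understood; hence $\Lambda$ is uniformly hyperbolic.

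For part (2): assuming all singularities in $\Lambda$ have the same index $i$, I would first use condition (iii) of Definition~\ref{Def:multi-singular-hyperbolic} at each $\sigma$ to read off that the splitting $T_\sigma M=E^{ss}\oplus E^c\oplus E^{uu}$ has $E^c$ attracting or repelling (since $\max(\rho^{ss},\rho^{uu})<\min(\rho^c,1/\rho^c)<1$ forces $|\rho^c|<1$, i.e. $E^c$ contracted, or $|\rho^c|>1$, i.e. $E^c$ expanded by $D\varphi_{-1}$), so that every singularity is \emph{Lorenz-like} of a fixed type determined by the common index; by the accumulation hypothesis and Remark~3 after Definition~\ref{Def:multi-singular-hyperbolic}, $E^c$ attracting gives $W^{ss}(\sigma)\cap\Lambda=\{\sigma\}$ so $E^s:=E^{ss}$ has the right dimension $i$, while the $(i+1)$-dimensional bundle $E^{ss}\oplus E^c$ together with $E^{uu}$ patches with $\cN^s\oplus\cN^u$ over regular points to give a tangent dominated splitting $E^s\oplus E^{cu}$ (handling the opposite Lorenz-like type symmetrically). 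Then condition (ii) gives uniform contraction of $E^s$ away from a neighborhood $V$ of $\Sing(X)$, which extends to all of $\Lambda$ because near each $\sigma$ the tangent splitting already has $E^{ss}$ uniformly contracted; for sectional expansion of $E^{cu}$, I would combine the uniform expansion of $\cN^u$ away from $V$ (condition (ii)) with the eigenvalue inequality $\max(\rho^{ss},\rho^{uu})<\rho^c$ at singularities, exactly as in the original Bonatti--da Luz computation relating rescaled Poincar\'e expansion to Jacobian expansion of $2$-planes. The converse (singular hyperbolic $\Rightarrow$ multi-singular hyperbolic, same index) runs the same dictionary backwards: a dominated splitting $E^s\oplus E^{cu}$ with $E^s$ uniformly contracted and $E^{cu}$ sectionally expanded projects to $\cN^s\oplus\cN^u$, which is a singular dominated splitting (the isolation of $W^{ss}(\sigma)$ follows from $E^{cu}$ sectional expansion forbidding an extra contracted direction at $\sigma$), satisfies (ii) by projecting the uniform contraction of $E^s$ and extracting uniform expansion of $\cN^u$ from sectional expansion of $E^{cu}$ (a standard cocycle argument), and satisfies (iii) with the common index at each singularity.

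The main obstacle I anticipate is the careful bookkeeping at the singularities, namely verifying that the subbundles $E^{ss}$, $E^c$, $E^{uu}$ at each $\sigma$ glue continuously with the limit of $\cN^s\oplus\cN^u$ along sequences of regular points converging to $\sigma$ — this is where the accumulation hypothesis is used to pin down which of the two alternatives in Definition~\ref{d.singular-domination}(ii) occurs and to rule out the flow direction degenerating, and it is also where the passage between "uniform expansion of $\cN^u$ off $V$" and "sectional expansion of $E^{cu}$ everywhere" must be made with uniform constants. The implications not passing through singularities (the non-singular case of part (1), and the regular-point estimates) are routine given the results already quoted from Sections~\ref{s.tangent} and the properties listed in the Remark after Definition~\ref{Def:multi-singular-hyperbolic}.
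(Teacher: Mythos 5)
Your overall dictionary (translate between tangent-flow and Poincar\'e-flow splittings, use the accumulation hypothesis at singularities) is the paper's strategy, but the two steps you treat as bookkeeping are exactly the substantive ones, and as written they are gaps. In the direction ``singular hyperbolic $\Rightarrow$ multi-singular hyperbolic'' you project $E^{s}\oplus E^{cu}$ to the normal bundle without first proving that $X(x)\in E^{cu}(x)$ for every $x\in\Lambda$. This is not automatic: if the flow direction had a component in $E^{s}$ at some point, intersecting $E^{s}\oplus\RR X$ and $E^{cu}$ with $\cN$ would not produce an invariant splitting of $\cN$ of index $\dim E^{s}$, and your one-line justification of $W^{ss}(\sigma)\cap\Lambda=\{\sigma\}$ (``sectional expansion forbids an extra contracted direction'') also presupposes it. The paper opens this part of the proof with an $\alpha$-limit-set/Lyapunov-exponent argument establishing precisely $X\in E^{cu}$, and everything else (the index of the projected splitting, the escape of $W^{ss}(\sigma)$, the identification of a one-dimensional stable center in $E^{cu}(\sigma)$ giving item (iii)) hinges on it. In the converse direction, your claim that $E^{ss}\oplus E^{c}$, $E^{uu}$ at the singularities ``patch'' with $\cN^{s}\oplus\cN^{u}$ into a tangent dominated splitting is the crux, not a formality: the paper first proves that $\cN^{s}$ is uniformly contracted by $(\Psi_t)$ along \emph{all} orbit segments, by splitting an orbit into pieces inside and outside the isolating neighborhood $V$ and using $W^{ss}(\sigma)\cap\Lambda=\{\sigma\}$ to see that inside $V$ the direction $\RR X$ stays close to $E^{c}\oplus E^{uu}$, hence $\cN^{s}$ stays close to $E^{ss}(\sigma)$; only then does Proposition~\ref{p.domination-criterion} yield $TM|_\Lambda=E^{ss}\oplus F$. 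For the sectional expansion of $F$ you defer to ``the original Bonatti--da Luz computation'' with the renormalization cocycle, which is the machinery this paper deliberately avoids; the paper instead gets it from ergodic measures, via Proposition~\ref{p.hyperbolicity-of-erogodic-measure-in-multi-sing} for regular measures and the Lorenz-like inequality of Definition~\ref{Def:multi-singular-hyperbolic}(iii) at the singularities.

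Two smaller points in part (1). For the backward implication you invoke Proposition~\ref{p.uniform-bundle}, but that proposition goes the other way (a tangent domination forces one bundle to be uniform) and assumes chain transitivity, which Theorem~\ref{t.comparison} does not; the correct tool is again Proposition~\ref{p.domination-criterion}, applied twice to get $TM=E^{s}\oplus F=E\oplus E^{u}$, followed by the finest-dominated-splitting argument identifying the one-dimensional middle bundle with $\RR X$. For the forward implication, a singularity whose unstable set alone meets $\Lambda$ does not by itself contradict continuity of the splitting: the limit of $\RR X$ along such an orbit can sit as an invariant line inside $E^{u}(\sigma)$ compatibly with the domination. The contradiction comes from using \emph{both} $W^{s}(\sigma)\cap\Lambda\setminus\{\sigma\}\neq\emptyset$ and $W^{u}(\sigma)\cap\Lambda\setminus\{\sigma\}\neq\emptyset$, which force incompatible stable dimensions at $\sigma$ --- this is precisely where the theorem's standing hypothesis is consumed, and your write-up never uses the stable side.
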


\paragraph{d-- Several singular dominations.}
We may consider an invariant compact set $\Lambda$ where the linear Poincar\'e flow admits a dominated splitting $\cN=\cN_1\oplus\dots\oplus \cN_\ell$ into more than two bundles:
such that each splitting $(\cN_1\oplus\dots\oplus\cN_k)\oplus(\cN_{k+1}\oplus\dots\oplus\cN_\ell)$ is singular dominated.
As a direct consequence from Definition~\ref{d.singular-domination} we get:

\begin{Remark}
Let $\Lambda$ be a compact set with a singular dominated splitting $\cN=\cN_1\oplus\dots\oplus \cN_\ell$.
Then each hyperbolic singularity $\sigma\in\Lambda$ admits a dominated splitting
$$T_\sigma M=E^s_1\oplus \dots \oplus E^s_k\oplus E^c\oplus E^u_{k+2}\oplus\dots\oplus E^u_\ell,$$
such that
\begin{itemize}
\item each $E^s_i$ is stable and has the same dimension as $\cN_i$,
\item each $E^u_i$ is unstable and has the same dimension as $\cN_i$,
\item $E^c$ has dimension equal to $1+\dim(\cN_{k+1})$,
\item there is a stable manifold $W^{ss}(\sigma)$ tangent to $E^s_1\oplus \dots \oplus E^s_k$ with $W^{ss}(\sigma)\cap\Lambda=\{\sigma\}$,
\item there is an unstable manifold $W^{uu}(\sigma)$ tangent to $E^u_{k+2}\oplus \dots \oplus E^u_\ell$ with $W^{uu}(\sigma)\cap\Lambda=\{\sigma\}$.
\end{itemize}
Moreover the splitting is unique if $W^{s}(\sigma)\cap\Lambda\setminus \{\sigma\}\neq\emptyset$ and $W^{u}(\sigma)\cap\Lambda\setminus \{\sigma\}\neq \emptyset$.
\end{Remark}

\section{Preliminaries}\label{s.preliminaries}
 This section  collects classical notions and properties used in this paper.
\paragraph{a-- Chain recurrence.} Consider a continuous flow $(\varphi_t)_{t\in\RR}$ on a compact metric space $(K,\ud)$. For $\e>0$, a sequence $x_1,\cdots,x_n$ in $K$ is an $\e$-pseudo orbit if for each $1\leq i\leq n-1$, there exists $t_i\geq 1$ such that  $\ud(\varphi_{t_i}(x_i),x_{i+1})<\e$.
One says that $x$ is \emph{chain attainable from $y$}   if for any $\e>0$, there exists an $\e$-pseudo orbit $\{x_i\}_{i=0}^n$ with $x_0=y$, $x_n=x$ and $n\geq 1$.
A set $\Lambda$ is \emph{chain transitive} if for any pair $(x,y)\in\Lambda\times\Lambda$, the first point $x$ is chain attainable from the second point $y$.

A point $x$ is \emph{chain recurrent} if $x$ is chain attainable from itself. The set of chain recurrent points is denoted as $\cR$. 
For $x\in\cR$, we define the \emph{chain recurrence class} of $x$ as the union of the chain transitive sets containing $x$. 
By definition, the chain recurrence classes define a partition of the chain recurrent set into invariant compact sets.
 
\paragraph{b-- The linear Poincar\'e flow and its extension.}\label{ss.poincare}
Given a vector field $X$ on a Riemannian manifold $M$, one defines the normal bundle $\cN$ on the complement
of the singular set $\sing(X)$ in the following way:
 $$\cN|_{M\setminus\sing(X)}=\bigcup_{x\in M\setminus \sing(X)} \big\{v\in T_x M:  <v, X(x)>=0\big\},$$
 where $<\cdot,\cdot>$ denotes the inner product. 
One then defines the \emph{linear Poincar\'e flow}  $(\Psi_t)_{t\in\RR}$ in the following way: for each vector $v\in\cN(x)$ with $x\in M\setminus\sing(X)$ and any $t\in \RR$,
$$\Psi_t(v)=D\varphi_t(v)-\frac{<D\varphi_t(v), X(\varphi_t(x))>}{\|X(\varphi_t(x))\|^2} \cdot X(\varphi_t(x)).$$
 
Following~\cite{LGW}, the linear Poincar\'e flow may be compactified at the singularities as a linear flow $(\wh\Psi_t)_{t\in\RR}$ called \emph{extended linear Poincar\'e flow}.
Consider the projective bundle 
 $$\cG^1=\big\{L\subset T_xM: \textrm{ $x\in M$ and $L$ is a one dimensional linear space in $T_xM $}\big\}.$$
The set $M\setminus\sing(X)$ embeds naturally in $\cG^1$ by the map $x\mapsto \RR X(x)$.
The tangent flow induces a continuous flow $(\wh\varphi_t)_{t\in\RR}$ on $\cG^1$ which extends $(\varphi_t)$: for $L=\RR u$ in $\cG^1$ one defines
$$\wh\varphi_t(\RR u)=\RR D\varphi_t(u).$$
One introduces a normal bundle over $\cG^1(M)$ which extends $\cN|_{M\setminus\sing(X)}$:
for $L\in\cG^1(x)$, $$\cN(L)=\big\{v\in T_x M: \textrm{ $v$ is orthogonal to the linear space $L$}\big \}.$$ 
One defines $(\wh\Psi_t)_{t\in\RR}$ on $\cN$ in the following way: for each $L=\RR u\in \cG^1(x)$ and $v\in\cN(L)$  
$$\wh\Psi_t(v)=D\varphi_t(v)-\frac{<D\varphi_t(v), D\varphi_t(u)>}{\|D\varphi_t(u)\|^2} \cdot D\varphi_t(u).$$
When $x$ is a regular point and $L=\RR X(x)$, then for any $v\in\cN(L)=\cN(x)$ one has $\wh\Psi_t(v)=\Psi_t(v)$.

\paragraph{c-- Lyapunov exponents.} Consider $X\in \cX^1(M)$ and an invariant probability measure $\mu$. The measure is \emph{regular} if $\mu(\sing(X))=0$.

We recall Oseledec theorem. 
For $\mu$-almost every $x\in M$, there are $k=k(x)$ numbers $\lambda_1(x)<\lambda_2(x)<\cdots<\lambda_k(x)$ and a splitting 
$T_xM=E_1(x)\oplus E_2(x)\oplus\cdots\oplus E_k(x)$ such that for any $1\le i\le k$ and any unit vector $v\in E_i$,
		$$\lim_{t\to\pm\infty}\frac{1}{t}\log\|D\varphi_t(v)\|=\lambda_i.$$
If $\mu$ is ergodic, then $k$ and $\lambda_1,\cdots,\lambda_k$ are constants on a full $\mu$-measure set.

A regular ergodic measure is \emph{hyperbolic} if  it has only one vanishing Lyapunov exponent (which is given by the flow direction). 
Equivalently, there exists a measurable splitting of the normal bundle $\cN=\cN_1\oplus\dots\oplus\cN_\ell$ defined on a set with full $\mu$-measure
which is invariant under the linear Poincar\'e flow and non-zero numbers $\lambda'_1(x)<\cdots<\lambda'_\ell(x)$ such that for $\mu$-almost every point $x$,
any $1\le j\le \ell$ and any unit vector $v\in \cN_j$, the quantity
$\frac{1}{t}\log\|\Psi_t(v)\|$ converges to $\lambda'_j$ as $t\to\pm\infty$.
The numbers $\lambda'_j$ coincide with the non-zero Lyapunov exponents $\lambda_i$ of $\mu$.

\paragraph{d-- Dynamics above hyperbolic singularities.}
The \emph{index} $\ind(\sigma)$ of a hyperbolic singularity is the dimension of its stable space.
The following fundamental result allows to exile the strong stable manifold of a singularity from a compact invariant set containing the singularity.
It comes from~\cite{LGW}. 
\begin{Proposition}~\label{p.strong-stable-outside}
Consider $X\in\cX^1(M)$, a hyperbolic singularity $\sigma$
and a $(\widehat \varphi_t)$-invariant compact set $\wh\Lambda$ in the projective tangent space $\cG^1(\sigma)$.
If $\wh\Lambda$ admits a dominated splitting $\wh\cN|_{\wh\Lambda}=\cN_1\oplus \cN_2$ for $(\wh\Psi_t)$ of index $i<\ind(\sigma)$ and
if $\wh\Lambda$ intersects the projective space of $E^u(\sigma)$, then 
	\begin{itemize}
		\item $E^s(\sigma)$ has a finer dominated splitting $E^{ss}(\sigma)\oplus E^{cs}(\sigma)$ for $(D\varphi_t)_{t\in\RR}$ with $\dim(E^{ss}(\sigma))=i$;
		\item any line $L\subset E^{ss}(\sigma)\oplus E^u(\sigma)$
		which is not contained in $E^{ss}(\sigma)\cup E^u(\sigma)$ is disjoint from $\wh\Lambda.$
	\end{itemize}
\end{Proposition}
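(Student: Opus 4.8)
The plan is to reduce everything to linear algebra on the fixed space $T_\sigma M$. Put $A:=DX(\sigma)$ and $A_t:=D\varphi_t|_{T_\sigma M}=e^{tA}$; then $(\wh\varphi_t)$ on $\cG^1(\sigma)=\PP(T_\sigma M)$ is the projective action of $(A_t)$, and, identifying $\wh\cN(L)=L^{\perp}$ with $T_\sigma M/L$, the flow $(\wh\Psi_t)$ is the cocycle $T_\sigma M/L\to T_\sigma M/(A_tL)$ induced by $(A_t)$. All auxiliary metrics (quotient metrics, orthogonal projections) are uniformly comparable on the compact sets considered, so exponential rates are unambiguous. Set $d:=\ind(\sigma)=\dim E^{s}(\sigma)$; the case $i=0$ is vacuous, so assume $1\le i<d$. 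I will freely use the standard facts that over a compact invariant set a continuous linear cocycle admits a finest dominated splitting, of which every dominated splitting is a grouping (so two dominated splittings of indices $i<j$ are compatible, the first refining the second), and that a dominated splitting of prescribed index is unique.

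\textbf{First assertion.} Work over the nonempty compact invariant set $K:=\wh\Lambda\cap\PP(E^{u}(\sigma))$. For $L\subset E^{u}(\sigma)$, $T_\sigma M/L$ splits $(\wh\Psi_t)$-invariantly as $\cE^{s}\oplus\cE^{u}$, where $\cE^{s}$ is the image of the \emph{fixed} subspace $E^{s}(\sigma)$ (so $(\wh\Psi_t)|_{\cE^{s}}$ is the constant, uniformly contracted map $A_t|_{E^{s}(\sigma)}$) and $\cE^{u}(L)=E^{u}(\sigma)/L$ (uniformly expanded). Thus $\cE^{s}\oplus\cE^{u}$ is a dominated splitting over $K$ of index $d>i$, so $\cN_1|_K\subset\cE^{s}$, $\cE^{u}\subset\cN_2|_K$, and $\cE^{s}=\cN_1\oplus\cN_2'$ with $\cN_2':=\cN_2\cap\cE^{s}$ of positive dimension $d-i$. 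Fix $L_0\in K$ and nonzero $v\in\cN_1(L_0)$, $w\in\cN_2'(L_0)$; using $(\wh\Psi_t)|_{\cE^{s}}=A_t|_{E^{s}(\sigma)}$, the domination inequality at $L_0$ reads $\|A_tv\|\cdot\|w\|\le Ce^{-\eta t}\|v\|\cdot\|A_tw\|$ for large $t$. Letting $t\to+\infty$ and writing $\varrho(v):=\lim_t\|A_tv\|^{1/t}$ (a genuine limit for the constant flow $A_t|_{E^{s}(\sigma)}$, with $\|A_tv\|$ equal to $\varrho(v)^t$ up to polynomial factors) forces $\varrho(v)<\varrho(w)$. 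By the structure of the eigenvalue-modulus filtration of $A|_{E^{s}(\sigma)}$, the only $i$-dimensional subspace that can be separated this way from a complementary $(d-i)$-dimensional one is the sum $E^{ss}(\sigma)$ of the generalized eigenspaces of $A|_{E^{s}(\sigma)}$ with the $i$ smallest moduli; in particular $A|_{E^{s}(\sigma)}$ has a modulus gap at level $i$, and, writing $E^{cs}(\sigma)$ for the sum of the remaining generalized eigenspaces, $E^{s}(\sigma)=E^{ss}(\sigma)\oplus E^{cs}(\sigma)$ is a dominated splitting for $(D\varphi_t)$ with $\dim E^{ss}(\sigma)=i$. The same computation gives $\cN_1\equiv E^{ss}(\sigma)$ and $\cN_2\equiv E^{cs}(\sigma)\oplus\cE^{u}$ throughout $K$.

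\textbf{Second assertion.} Suppose for contradiction that some line $L\subset W:=E^{ss}(\sigma)\oplus E^{u}(\sigma)$ with $L\not\subset E^{ss}(\sigma)$ and $L\not\subset E^{u}(\sigma)$ lies in $\wh\Lambda$. Since $E^{ss}(\sigma)\subset E^{s}(\sigma)$ is contracted and $E^{u}(\sigma)$ expanded, $\wh\varphi_tL\to\PP(E^{u}(\sigma))$ as $t\to+\infty$ and $\wh\varphi_tL\to\PP(E^{ss}(\sigma))$ as $t\to-\infty$; hence $Z:=\overline{\{\wh\varphi_tL:t\in\RR\}}\subset\wh\Lambda\cap\PP(W)$ is compact invariant with $\emptyset\neq\omega(L)\subset K$ and $\emptyset\neq\alpha(L)\subset\wh\Lambda\cap\PP(E^{ss}(\sigma))$. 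As $W$ and $E^{cs}(\sigma)$ are $A$-invariant and complementary, over $Z$ one has the $(\wh\Psi_t)$-invariant splitting $\wh\cN|_Z=[W/\,\cdot\,]\oplus[E^{cs}]$, where $[W/\ell]$ is the image of $W$ in $T_\sigma M/\ell$ and $[E^{cs}]$ (the image of $E^{cs}(\sigma)$) carries the constant cocycle $A_t|_{E^{cs}(\sigma)}$, uniformly contracted at rates $\ge\log\nu_{i+1}>\log\mu$, with $\mu$ (resp. $\nu_{i+1}$) the largest modulus of $E^{ss}(\sigma)$ (resp. the smallest of $E^{cs}(\sigma)$). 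The crucial step is $\cN_1|_Z\subset[W/\,\cdot\,]$: if some $v\in\cN_1(x)$, $x$ on the orbit of $L$, had a nonzero $[E^{cs}]$-component, then --- that component being exactly the $[E^{cs}]$-part of $\wh\Psi_tv$, and the splitting having uniformly bounded angle over $Z$ --- one would get $\liminf_t\tfrac1t\log\|\wh\Psi_tv\|\ge\log\nu_{i+1}$; but the forward orbit of $x$ eventually stays in any prescribed neighborhood of $\omega(L)\subset K$, where $\cN_1$ is contracted at a rate arbitrarily close to $\log\mu$, so $\limsup_t\tfrac1t\log\|\wh\Psi_tv\|\le\log\mu<\log\nu_{i+1}$ --- a contradiction. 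Hence $\cN_1|_Z\subset[W/\,\cdot\,]$; in particular $\cN_1(\ell)\subset[W/\ell]=[E^{ss}/\ell]\oplus[E^{u}]$ for $\ell\in\alpha(L)$, where over $\wh\Lambda\cap\PP(E^{ss}(\sigma))$ one has the invariant splitting $\wh\cN=[E^{ss}/\,\cdot\,]\oplus[E^{cs}]\oplus[E^{u}]$ with $[E^{u}]$ the constant, uniformly expanded cocycle $A_t|_{E^{u}(\sigma)}$. Then $([E^{ss}/\,\cdot\,]\oplus[E^{cs}])\oplus[E^{u}]$ is a dominated splitting of index $d-1\ge i$ over $\wh\Lambda\cap\PP(E^{ss}(\sigma))$, so by compatibility (or uniqueness when $i=d-1$) $\cN_1\subset[E^{ss}/\,\cdot\,]\oplus[E^{cs}]$ there; intersecting with $\cN_1(\ell)\subset[E^{ss}/\ell]\oplus[E^{u}]$ gives $\cN_1(\ell)\subset[E^{ss}/\ell]$, which is impossible since $\dim\cN_1=i>i-1=\dim[E^{ss}/\ell]$. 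This contradiction proves the second assertion.

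\textbf{Main obstacle.} The hard part is the step $\cN_1|_Z\subset[W/\,\cdot\,]$: it requires a careful, uniform comparison of the decay rate of $\wh\Psi_tv$ with the contraction rate of $\cN_1$ as the orbit repeatedly enters neighborhoods of $\omega(L)$, keeping track of the polynomial factors produced by a possible nilpotent part of $A$. Once this is in place the rest is bookkeeping with the cited general facts about finest dominated splittings and the growth of constant linear flows.
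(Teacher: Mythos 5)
Your proof is correct. Your treatment of the first item is in substance the paper's: over lines $L\in\wh\Lambda\cap\PP(E^u(\sigma))$ the $E^s(\sigma)$-part of $\wh\Psi_t$ is the constant cocycle $D\varphi_t|_{E^s(\sigma)}$, and a domination of index $i<\ind(\sigma)$ then forces a splitting of $E^s(\sigma)$ at level $i$; the paper phrases this via the constancy of the invariant sub-bundles along the orbit of $L$, while you identify $\cN_1$ with the sum of the generalized eigenspaces with the $i$ smallest moduli through the Lyapunov filtration of $A|_{E^s(\sigma)}$ (your parenthetical claim that ``the same computation'' gives $\cN_2\equiv E^{cs}\oplus\cE^u$ over $K$ actually needs the uniqueness of dominated splittings of a given index rather than the rate comparison alone, but you invoked that fact in your setup and only $\cN_1\equiv E^{ss}$ over $K$ is used later). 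For the second item your route is genuinely different. The paper works at the single line $L_0=\RR(v^{ss}+v^u)$: it chooses the explicit normal vector $v^{ss}\|v^u\|^2-v^u\|v^{ss}\|^2\in\cN(L_0)$, computes $\wh\Psi_t$ of it in closed form, and reaches a contradiction from asymptotic directions (if the vector lies in $\cN_1(L_0)$ its backward images align with $E^u(\sigma)$, transverse to $\cN_1$ over the $\alpha$-limit where $\cN_2\supset E^u(\sigma)$; if not, domination aligns its forward images with $\cN_2$ over the $\omega$-limit, whereas the computation aligns them with $E^{ss}(\sigma)=\cN_1$). You instead establish the structural inclusion $\cN_1\subset[W/\cdot]$, $W=E^{ss}(\sigma)\oplus E^u(\sigma)$, along the orbit closure, by comparing the rate at least $\log\nu_{i+1}$ forced by a nonzero $[E^{cs}]$-component with the rate at most $\log\mu+\e$ obtained because the forward orbit eventually stays near $\omega(L)\subset\PP(E^u(\sigma))$, where $\cN_1=E^{ss}(\sigma)$; you then conclude by a dimension count at the $\alpha$-limit using compatibility with the dominated splitting $[E^s/\cdot]\oplus[E^u]$ over $\wh\Lambda\cap\PP(E^{ss}(\sigma))$. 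The paper's computation is shorter and entirely avoids uniformity issues and polynomial (Jordan) corrections; your argument is softer and more conceptual, needs no clever choice of normal vector, and makes explicit where each hypothesis enters, at the price of the ``crucial step'' you flag, whose sketch (fixed large time $T$, continuity of $\cN_1$ on a neighborhood of $\omega(L)$, and the fact that the forward orbit eventually remains in that neighborhood) is indeed sound.
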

\proof
Up to changing the metric, one can assume that the bundles in the hyperbolic splitting $E^s(\sigma)\oplus E^u(\sigma)$ are orthogonal to each other.
In particular over points of $\widehat \Lambda$ contained in the projective space of $E^{s}$, the bundle $\cN_2$ contains $E^u(\sigma)$

Consider $L\in\wh\Lambda$ which is contained in the projective space of $E^u(\sigma)$. As $E^s(\sigma)$ is orthogonal to the 1-dimensional linear space given by $L$, one has $\wh\Psi_t(L)|_{ E^s(\sigma)}=D\varphi_t|_{ E^s(\sigma)}$. By the domination $\cN_1\oplus \cN_2$, the space $E^s(\sigma)$ splits into two dominated sub-bundles $E^{ss}=\cN_1$ and $E^{cs}$ over the orbit of $L$.
Since the cocycle $\wh\Psi_t(L)|_{ E^s(\sigma)}$ is constant over the orbit of $L$, these bundles are constant as well.
This implies that $E^s(\sigma)$ admits a dominated decomposition $E^{ss}\oplus E^{cs}$ for $(D\varphi_t)$.

Up to changing the metric, we will further assume that $E^{ss}(\sigma)$ is orthogonal to $E^{cs}(\sigma).$ 

In order to prove the second property, one will suppose by contradiction that there exist a non-vanishing vector $v^{ss}\in E^{ss}(\sigma)$ and a non-vanishing vector $v^u\in E^u(\sigma)$ such that the line $L_0=\RR(v^{ss}+v^u)$ belongs to $\wh\Lambda$.
 Then $v^{ss}\cdot \|v^u\|^2-v^u\cdot \|v^{ss}\|^2$ belongs to  
 $\cN_{L_0}$. Now, we look at the orbit of $v^{ss}\cdot \|v^u\|^2-v^u\cdot \|v^{ss}\|^2\in \cN_{L_0}$ under the extended linear Poincar\'e flow.
 By definition,
 \begin{eqnarray}\label{e.lpf}
&& \wh\Psi_t\big(v^{ss}\cdot \|v^u\|^2-v^u\cdot \|v^{ss}\|^2\big)\nonumber\\
&&\quad\quad =D\varphi_t(v^{ss})\|v^u\|^2-D\varphi_t(v^u)\|v^{ss}\|^2\nonumber\\
&&\quad\quad\quad\quad\quad\quad\quad\quad -\frac{\|D\varphi_t(v^{ss})\|^2\|v^u\|^2-\|D\varphi_t(v^u)\|^2\|v^{ss}\|^2}{\|D\varphi_t(v^{ss})\|^2+\|D\varphi_t(v^u)\|^2}\cdot \big(D\varphi_t(v^{ss})+D\varphi_t(v^u)\big)\nonumber\\
&&\quad\quad =\frac{\|v^{ss}\|^2+\|v^{u}\|^2}{\|D\varphi_t(v^{ss})\|^2+\|D\varphi_t(v^u)\|^2}\cdot \big(\|D\varphi_t(v^u)\|^2 D\varphi_t(v^{ss})- \|D\varphi_t(v^{ss})\|^2 D\varphi_t(v^u)\big).
 \end{eqnarray}

Let us first assume that $v^{ss}\cdot \|v^u\|^2-v^u\cdot \|v^{ss}\|^2$ belongs to $\cN_1|_{L_0}$.
As $\wh\varphi_t(L_0)$ accumulates on a subset  $\alpha$ of the projective space of $E^{ss}(\sigma)$ when $t$ tends to $-\infty$, the invariance of the bundle $\cN_1$ implies that  $\RR\cdot\widehat \Psi_t(v^{ss}\cdot \|v^u\|^2-v^u\cdot \|v^{ss}\|^2)$ accumulates in $\cN_1|_{\alpha}$;  however from~\eqref{e.lpf},
the lines $\RR\cdot\widehat \Psi_t(v^{ss}\cdot \|v^u\|^2-v^u\cdot \|v^{ss}\|^2)$ accumulate on a linear subspace in $E^u(\sigma)$ when $t$ tends to $-\infty$. This is a contradiction since $\cN_2|_{\alpha}$ contains $E^u(\sigma)$
and intersects $\cN_1|_{\alpha}$ trivially.

We are now reduced to the case where $v^{ss}\cdot \|v^u\|^2-v^u\cdot \|v^{ss}\|^2$ does not belong to $\cN_1|_{L_0}$. When $t$ tends to $+\infty$, the lines $\wh\varphi_t(L_0)$ accumulate on a subset $\omega$ of the projective space of $E^{u}(\sigma)$ and (from the domination) $\RR\cdot\Psi_t(v^{ss}\cdot \|v^u\|^2-v^u\cdot \|v^{ss}\|^2)$ accumulates inside a linear space in $\cN_2|_{\omega}$; however from~\eqref{e.lpf}, the lines $\RR\cdot\Psi_t(v^{ss}\cdot \|v^u\|^2-v^u\cdot \|v^{ss}\|^2)$ accumulate  in the linear space $E^{ss}(\sigma)$ which is a contradiction. This proves the second item.
\endproof

\paragraph{e-- Lorenz-like singularities.}\label{ss.lorenz-like} A hyperbolic singularity is Lorenz-like if its hyperbolic splitting $T_\sigma M=E^s\oplus E^u$,
its smallest positive Lyapunov exponent $\lambda^u$ and
its largest negative Lyapunov exponent $\lambda^s$ satisfy one of the following properties:
\begin{itemize}
\item either there exists a dominated splitting $E^s=E^{ss}\oplus E^c$ with $\dim(E^c)=1$ and $\lambda^s+\lambda^u>0$,
\item or  there exists a dominated splitting $E^u=E^{c}\oplus E^{uu}$ with $\dim(E^c)=1$ and $\lambda^s+\lambda^u<0$.
\end{itemize}
Note that this is equivalent to the property stated in the third item of Definition~\ref{Def:multi-singular-hyperbolic}.

\paragraph{f-- Star vector fields.}
A vector field $X\in\cX^1(M)$ is \emph{star} if
for any vector field $Y$ in a $C^1$-neighbor\-hood of $X$, all the periodic orbits and singularities of $Y$ are hyperbolic.

\begin{Theorem}[Liao~\cite{liao-star}]\label{thm:fundamental-property}
For any star vector field $X\in\cX^1(M)$,  there exist $\eta,T>0$ and a $C^1$ neighborhood $\cU$ of $X$ with the following properties. For any $Y\in\cU$ and  any periodic orbit $\gamma$ of $Y$ with period $\pi(\gamma)$ larger than $T$, let us denote $\cN_{\gamma}=\cN^s\oplus\cN^u$ the hyperbolic splitting of the linear Poincar\'e flow $(\Psi_t^Y)_{t\in\mathbb{R}}$ associated to $Y$. Then for each $p\in\gamma$, one has 
	$$\|\Psi^Y_t|_{\cN^s(p)}\|\cdot\|\Psi^Y_{-t}|_{\cN^u(\varphi_t(p))}\|<e^{-2t\eta}\textrm{ for each $t\geq T$};$$
	$$\prod_{i=0}^{[\pi(\gamma)/T]-1}\|\Psi^Y_T|_{\cN^s(\varphi^Y_{iT}(p))}\|\leq e^{-\eta\pi(\gamma)} \text{ and }
	\prod_{i=0}^{[\pi(\gamma)/T]-1}\|\Psi^Y_{-T}|_{\cN^u(\varphi^Y_{iT}(p))}\|\leq e^{-\eta\pi(\gamma)}.$$
\end{Theorem}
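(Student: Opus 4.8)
\emph{Proof idea.} The plan is to deduce both displayed estimates from a single robust property of the periodic orbits, using a Franks-type perturbation lemma together with the star hypothesis, which forbids producing a non-hyperbolic periodic orbit by a $C^1$-small perturbation. Each estimate will be proved by contradiction: assume there are $Y_n\to X$ and periodic orbits $\gamma_n$ of $Y_n$ with $\pi(\gamma_n)\to\infty$ along which it fails for $\eta=1/n$, and derive a non-hyperbolic periodic orbit of a vector field arbitrarily $C^1$-close to $X$. The basic tool is the flow analogue of Franks' lemma for the linear Poincar\'e flow: given $Y$, a periodic orbit $\gamma$, finitely many pairwise disjoint subintervals of $\gamma$ of bounded length, and linear maps of the normal bundle that are $C^1$-close to the identity and supported over those intervals, there is a vector field $Z$ which is $C^1$-close to $Y$, equal to $Y$ outside an arbitrarily thin tube around $\gamma$, still has $\gamma$ as a periodic orbit, and whose linear Poincar\'e flow along $\gamma$ is the original one post-composed with the prescribed maps. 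The crucial feature is that a perturbation which is small \emph{on each interval} stays $C^1$-small even when inserted over a number of intervals comparable to $\pi(\gamma)$.

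The first step is to establish a \emph{robust hyperbolicity} of the periodic orbits: there are a $C^1$-neighborhood $\cU$ of $X$ and an integer $N$ such that for every $Y\in\cU$ and every periodic orbit $\gamma$ of $Y$ with $\pi(\gamma)\ge N$, the Poincar\'e return map of $\gamma$ is hyperbolic, with a uniform lower bound for the angle between its stable and unstable bundles and a uniform spectral gap away from the unit circle; consequently the induced $(\Psi_t^Y)$-invariant splitting $\cN_\gamma=\cN^s\oplus\cN^u$ is dominated with uniform constants, and, after replacing $T$ by a large multiple of $N$, this already yields the first displayed inequality. If robust hyperbolicity failed along a sequence $(Y_n,\gamma_n)$, the perturbation lemma would let me rotate $\cN^s$ into $\cN^u$ over a bounded number of returns, or rescale a nearly neutral direction, producing inside $\cU$ a periodic orbit whose return map has an eigenvalue of modulus one --- contradicting the star property. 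This is the standard ``robust hyperbolicity forces domination'' mechanism (Ma\~n\'e, Bonatti--D\'iaz--Pujals for diffeomorphisms; Liao--Gan--Wen for flows).

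It remains to obtain the uniform contraction of $\cN^s$ over pieces of length $T$, the expansion of $\cN^u$ being symmetric (apply the same argument to the reversed flow). Suppose this estimate fails, so along $\gamma_n$ the average of $\log\|\Psi^{Y_n}_T|_{\cN^s(\varphi_{iT}(p_n))}\|$ over the $[\pi(\gamma_n)/T]$ sample points tends to $0$. Using the perturbation lemma I would insert, over disjoint length-$T$ intervals of $\gamma_n$, small perturbations of $\Psi^{Y_n}_T|_{\cN^s}$ whose accumulated effect raises the product of the norms over a full period to at least $1$; each perturbation stays $C^1$-small because the total logarithmic defect is small and spread over a proportion of order $\pi(\gamma_n)$ of the orbit. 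Combined with the uniform domination from the previous step --- which survives such a small perturbation and pins down $\cN^u$ --- the resulting orbit is a non-hyperbolic periodic orbit of a vector field in $\cU$, the desired contradiction.

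The main obstacle is the bookkeeping in this last step. It is not enough to raise the product of the \emph{norms} of $\Psi^s$ over a period to $1$: one must choose the locations and sizes of the perturbations --- a Pliss-type or Liao-type selection along the long orbit --- so that the accumulated effect is felt at the level of the \emph{spectral radius} of the perturbed return map, not merely its norm, which is what makes the produced orbit genuinely non-hyperbolic, all while keeping the perturbation $C^1$-small and a well-defined dominated splitting in force. Making these estimates uniform over the whole neighborhood $\cU$ is precisely the content of Liao's argument.
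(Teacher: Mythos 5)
This statement is quoted by the paper as a classical theorem of Liao (the paper gives no proof at all, only the citation to \cite{liao-star}), so there is no internal argument to compare with; what can be judged is whether your sketch would amount to a proof of Liao's theorem, and it does not. Your outline correctly identifies the standard strategy (a Franks-type lemma for the linear Poincar\'e flow, a contradiction scheme with $Y_n\to X$, $\eta=1/n$, and a selection argument turning a failure of the averaged estimate into a non-hyperbolic periodic orbit violating the star property), but the two steps that carry all the content are asserted rather than proved. First, ``robust hyperbolicity of periodic orbits forces a dominated splitting with uniform constants'' is not an off-the-shelf fact you can invoke here: making the angle bound, the domination constants and the time $T$ uniform over a whole $C^1$-neighborhood $\cU$ and over all long periods is exactly the Liao/Ma\~n\'e estimate that the theorem packages, so citing ``the standard mechanism'' is essentially circular. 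Second, for the averaged contraction of $\cN^s$, passing from ``the product of the norms $\|\Psi^{Y_n}_T|_{\cN^s}\|$ over a period is close to $1$'' to ``a $C^1$-small perturbation produces a return map with an eigenvalue of modulus one'' requires aligning most-expanded directions step by step (norm of a product versus product of norms), keeping the total perturbation $C^1$-small over $\sim\pi(\gamma_n)/T$ blocks, keeping $\gamma_n$ periodic, and keeping the perturbed field inside the star neighborhood; you name this obstacle yourself and then defer it (``precisely the content of Liao's argument''), which concedes that the proof is not given.

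There is also a flow-specific difficulty your sketch ignores: for star flows with singularities, the periodic orbits $\gamma_n$ may pass arbitrarily close to $\sing(X)$, where the flow speed degenerates, the tubular neighborhoods in which the Franks-type perturbation is performed become arbitrarily thin, and the time-$T$ transition maps of the linear Poincar\'e flow are not uniformly comparable to local Poincar\'e return maps. Handling this is precisely why Liao's proof goes through his tubular family theorem and carefully chosen cross-sections, and no diffeomorphism-style argument transplants verbatim. So the proposal is a reasonable roadmap of the known proof, but as written it has genuine gaps at the uniform-domination step, at the norm-versus-spectral-radius perturbation step, and at the near-singularity control, and these are the heart of the theorem rather than routine bookkeeping.
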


\paragraph{g-- Connecting lemma.} This flow version of the connecting lemma comes from~\cite{WX,W}.
\begin{Theorem}~\label{thm.connecting-lemma}
Let $X\in \cX^1(M)$. For any $C^1$-neighborhood $\cU$ of $X$, there exist $T>0$, $\rho\in(0,1)$ and $d_0>0$ such that for any point $x\in M$ which is non-periodic and non-singular under the flow $(\varphi_t^X)_{t\in\RR}$ generated by $X$, one has the following property.

For any $d\in(0,d_0)$, and any points $p,q\notin \Delta_T(x,d):=\cup_{t\in[1,T]} \varphi_t^X(B_d(x))$, if the forward orbit of $p$ and the backward orbit of $q$ intersect $B_{\rho\cdot d}(x)$, then there exists $Y\in\cU$ such that $q$ is on the forward orbit of $p$ under the flow $(\varphi_t^Y)_{t\in\RR}$. Moreover, $Y(z)=X(z)$ for $z\in M\setminus\Delta_T(x,d)$.
\end{Theorem}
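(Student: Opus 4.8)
The plan is to follow Hayashi's $C^1$ connecting lemma in the form adapted to flows by Wen and Xia~\cite{WX,W}: one produces the connecting orbit from a \emph{finite} family of arbitrarily $C^1$-small perturbations, each supported in a small flow box contained in the tube $\Delta_T(x,d)$, using the local product structure of the flow near the non-singular point $x$. Throughout, fix $\varepsilon>0$ so that every vector field which is $\varepsilon$-$C^1$-close to $X$ lies in $\cU$, and fix uniform bounds for $X$ and $DX$ on the compact manifold $M$. The constants $T$, $\rho\in(0,1)$, $d_0>0$ (and an auxiliary integer $N=N(\varepsilon,\dim M)$) will be chosen depending only on these data.

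\emph{Flow box and sub-boxes.} Since $x$ is non-singular, for every $d\in(0,d_0)$ the set $\bigcup_{t\in[0,T]}\varphi_t\big(B_d(x)\big)$ is, up to the flow-box theorem, modelled on a product $\Sigma\times[0,T]$, where $\Sigma$ is an $(n-1)$-dimensional disk of radius comparable to $d$ and $X$ corresponds to the constant field $\partial_s$; subdividing $[1,T]$ into $N$ equal intervals cuts $\Delta_T(x,d)$ into $N$ consecutive sub-boxes $C_1,\dots,C_N$. The hypothesis that $x$ is non-periodic, together with the smallness of $d_0$, is what lets this local model — and in particular the way the relevant orbit arcs meet the $C_j$'s — be controlled uniformly in $x$; this is the point where the "tiled tube" $\Delta_T(x,d)$ replaces a naive flow box.

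\emph{Elementary perturbation and the choice of $\rho$.} In any single sub-box $C_j\cong\Sigma\times[s_1,s_2]$ one has the classical push: for $a,b\in\Sigma$ with $|a-b|$ at most a fixed fraction of $s_2-s_1$, there is a vector field $Y$ with $Y=X$ off $C_j$, $\|Y-X\|_{C^1}<\varepsilon$, whose orbit entering the bottom face over $a$ leaves the top face over $b$, no other orbit crossing $C_j$ being pushed out of the box; this is obtained by tilting $\partial_s$ by a $C^1$-bump of size comparable to $|a-b|/(s_2-s_1)$, and it forces $T$ large and $d_0$ small. The constant $\rho\in(0,1)$ is then chosen so that the annulus $B_d(x)\setminus B_{\rho d}(x)$ provides the margin within which such pushes can steer the orbits: an orbit reaching $B_{\rho d}(x)$ has a base point within distance $\approx\rho d$ of the core of the tube, hence well inside $\Sigma$.

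\emph{Assembling the connection, and the main obstacle.} Let $p^{+}\in B_{\rho d}(x)$ lie on the forward orbit of $p$ and $q^{-}\in B_{\rho d}(x)$ on the backward orbit of $q$; in the flow-box model they give base points $\tilde p,\tilde q\in\Sigma$ of norm at most $\approx\rho d$. One selects a bounded number of the sub-boxes $C_j$ and applies the elementary perturbation in each so that the composed $Y$ reroutes the $X$-orbit arriving at $\tilde p$ on the bottom face until it leaves the top face over $\tilde q$, hence onto the $X$-orbit of $q^{-}$, which then runs to $q$; since the supports are pairwise disjoint sub-boxes contained in $\Delta_T(x,d)$, $Y$ is still $\varepsilon$-$C^1$-close to $X$ and equals $X$ outside $\Delta_T(x,d)$. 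The delicate point — the heart of Hayashi's argument and of~\cite{WX,W} — is to carry this out without creating further unwanted intersections with the perturbed sub-boxes that would require new corrections (the "no infinite regression"): this is achieved by taking $d<d_0$ small, by using the ordered decomposition $C_1,\dots,C_N$ so that each sub-box is used at most once, and by the fact that the orbit arcs which must be left unchanged (in particular the arcs of the orbit of $p$ before $p^{+}$ and of $q^{-}$ before $q$) stay, for $d$ small, so close to the orbit of the non-periodic point $x$ that they avoid the chosen supports. The perturbative and flow-box ingredients being elementary, this combinatorial control is the only real difficulty.
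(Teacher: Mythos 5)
The paper does not prove Theorem~\ref{thm.connecting-lemma}: it is quoted from Wen--Xia and Wen (\cite{WX,W}) as a known result, so there is no internal proof to compare against. Your proposal correctly reproduces the architecture of those proofs (the tiled tube $\Delta_T(x,d)$ cut into $N$ ordered sub-boxes, the elementary $C^1$-push inside each sub-box whose size forces $T$ large, and the role of $\rho$ as the ratio between the radius where orbits must enter and the radius of the tube). But it does not constitute a proof, because the step you yourself identify as ``the only real difficulty'' is dispatched with a claim that is false in general.

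Specifically, you assert that for $d$ small the orbit arcs that must be left unchanged --- the arc of the orbit of $p$ before $p^{+}$ and the arc from $q^{-}$ to $q$ --- ``stay so close to the orbit of the non-periodic point $x$ that they avoid the chosen supports.'' There is no reason for this: these arcs are arbitrary orbit segments of $X$, and they may enter and cross the tube $\Delta_T(x,d)$ (hence the perturbation supports) many times before the designated intersection with $B_{\rho d}(x)$ --- for instance whenever the orbit of $p$ accumulates on the orbit of $x$, which is exactly the situation the connecting lemma is designed for. Each such crossing is rerouted by your perturbation and can destroy the connection, and correcting it may create new crossings; this is the infinite-regression problem. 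Its resolution in \cite{WX,W} is not ``take $d$ small'' but a genuine combinatorial selection: one lists all returns of the orbits of $p$ and $q$ to the tiled tube, and whenever the rerouted orbit re-enters a perturbed sub-box one \emph{replaces} the intended connection point by that earlier return, using the ordering of the sub-boxes $C_1,\dots,C_N$ and a uniform-size estimate for the elementary push to show the selection terminates. None of that machinery appears in your argument, so the proof has a genuine gap at its central step. (The non-periodicity and non-singularity of $x$ are used to guarantee the tube is an embedded flow box for small $d$, as you say; they do not prevent other orbits from revisiting it.)
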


 \section{Singular domination}~\label{s.singular-domination}
In this section we discuss the notion of singular domination introduced in the introduction and prove that it is a robust property (Theorem~\ref{p.robust-singular-domination}).
We also build a robust example of a flow with no dominated splitting of the tangent bundle (section~\ref{s.tangent})
which shows that the linear Poincar\'e flow is more adapted than the tangent flow for studying the dynamics of vector fields.
Finally, we motivate the definition of singular domination by proving that a robust dominated splitting of the linear Poincar\'e flow
satisfies Definition~\ref{d.singular-domination}.

\subsection{Dominated splitting of the tangent flow}\label{s.tangent}
We first discuss the domination for the tangent flow:
the next statement (an improved version of~\cite[Theorem B]{BGY}) shows that it constraints the tangent behavior.
\begin{Proposition}\label{p.uniform-bundle}
Let $X\in \cX^1(M)$ and $\Lambda$ be a chain-transitive invariant compact set such that
\begin{itemize}
	\item all the singularities in $\Lambda$ are hyperbolic;
	\item $\Lambda$ admits a dominated splitting  $T_\Lambda M=E\oplus F$ for the tangent flow.
\end{itemize}
Then either $E$ is uniformly contracted, or $F$ is uniformly expanded.
\end{Proposition}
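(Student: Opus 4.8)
The plan is to argue by contradiction: assuming that $E$ is not uniformly contracted and $F$ is not uniformly expanded, I will produce, arbitrarily $C^1$-close to $X$, a hyperbolic periodic orbit whose hyperbolicity data are incompatible with the continuation of the dominated splitting $E\oplus F$. The first step concerns the singularities. For each $\sigma\in\Lambda\cap\Sing(X)$, the restriction $T_\sigma M=E(\sigma)\oplus F(\sigma)$ is a dominated splitting for the hyperbolic linear flow $(D\varphi_t|_{T_\sigma M})$, hence the Lyapunov exponents along $E(\sigma)$ are strictly smaller than those along $F(\sigma)$; consequently either all exponents along $E(\sigma)$ are negative (so $E(\sigma)$ is uniformly contracted, and then $\dim E\le\ind(\sigma)$, with equality iff $F(\sigma)$ is also uniformly expanded), or all exponents along $F(\sigma)$ are positive (so $F(\sigma)$ is uniformly expanded, and then $\dim E\ge\ind(\sigma)$). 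Moreover $E(\sigma)\oplus F(\sigma)$ extends to invariant cone fields on a small neighborhood of $\sigma$, with the corresponding contraction/expansion estimates for orbit segments that stay in that neighborhood, and these cone fields persist for vector fields $C^1$-close to $X$.

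\emph{Step 2 (exceptional ergodic measures).} Put $i=\dim E$. Since $E\oplus F$ is dominated, $E$ and $F$ are continuous over the compact set $\Lambda$. If $E$ is not uniformly contracted, then a standard argument — taking empirical measures along orbit segments along which $D\varphi_t|_E$ fails to contract and applying the subadditive ergodic theorem (equivalently, Cao's criterion that a continuous linear flow over a compact space whose Lyapunov exponents are all negative is uniformly contracted) — produces an ergodic measure $m^-$ on $\Lambda$ whose largest Lyapunov exponent along $E$ is $\ge 0$. Evaluating the domination inequality $\|D\varphi_t|_{E(x)}\|\cdot\|D\varphi_{-t}|_{F(\varphi_t x)}\|<e^{-\eta t}$ along $m^-$-generic orbits then forces every Lyapunov exponent of $m^-$ along $F$ to be $\ge\eta>0$. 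Hence $m^-$ is either $\delta_\sigma$ for a singularity with $\ind(\sigma)<i$, or a regular measure for which the zero exponent of the flow direction necessarily lies in the $E$-part of the Oseledec splitting (it cannot lie in $F$, where all exponents are $\ge\eta$); in both cases $m^-$ has at most $i-1$ negative Lyapunov exponents. Symmetrically, if $F$ is not uniformly expanded, one gets an ergodic measure $m^+$ on $\Lambda$ all of whose $E$-exponents are $\le-\eta<0$ and whose smallest $F$-exponent is $\le 0$; such a measure is $\delta_\tau$ with $\ind(\tau)>i$, or regular with the zero exponent of the flow in the $F$-part, and in all cases has at least $i$ negative Lyapunov exponents.

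\emph{Step 3 (gluing by chain-transitivity).} Now suppose both failures occur, so that $m^-$ and $m^+$ both exist, their stable indices lying strictly on opposite sides of $i$. Because $\Lambda$ is chain-transitive, $\operatorname{supp}(m^-)$ and $\operatorname{supp}(m^+)$ are joined, in both directions, by $\varepsilon$-pseudo-orbits; by the connecting lemma (Theorem~\ref{thm.connecting-lemma}) these are realized by genuine orbits of a vector field $Y$ arbitrarily $C^1$-close to $X$, yielding a (heterodimensional) cycle, and then by a Bonatti--D\'iaz-type cycle-breaking perturbation a hyperbolic periodic orbit $\gamma$ of index exactly $i$, which one arranges to shadow a long $m^-$-generic orbit segment for a proportion $1-o(1)$ of its (large) period, the remainder spent near $\operatorname{supp}(m^+)$ and in bounded transitions, everything kept inside a small neighborhood of $\Lambda$ (crossing the singularities via the cone fields of Step~1). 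The splitting $E\oplus F$ continues to a dominated splitting $E_Y\oplus F_Y$ of index $i$ along $\gamma$; since $\gamma$ is hyperbolic of index $i$ and dominated splittings of a prescribed index are unique, $E_Y$ equals the uniformly contracted stable bundle $E^s(\gamma)$, so $\|D\varphi^Y_t|_{E_Y}\|\le e^{-\eta' t}$ along $\gamma$ for a uniform $\eta'>0$. But along the $m^-$-shadowing portion one has $\|D\varphi^Y_t|_{E_Y}\|\ge e^{-\delta t}$ for a small preassigned $\delta<\eta'$, whence, over a full period, $\tfrac1{\pi(\gamma)}\log\|D\varphi^Y_{\pi(\gamma)}|_{E_Y}\|\ge-\delta-o(1)>-\eta'$, a contradiction.

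I expect the genuinely hard part to be Step~3: combining the flow version of the connecting lemma with cycle-breaking so as to obtain a periodic orbit that \emph{simultaneously} (a) has index exactly $i$, (b) carries the continuation of the tangent-flow domination through the neighborhoods of the singularities of $\Lambda$, and (c) shadows $m^-$ for an overwhelming proportion of its period — the three requirements pull against one another, and reconciling them (or replacing the construction by a more robust one, e.g. via Liao's selecting lemma for the linear Poincar\'e flow) is exactly the place where chain-transitivity and the perturbation lemmas must be used with care. Steps~1 and~2, by contrast, are routine once the spectral structure of dominated splittings at fixed points and the variational characterization of uniform contraction are in hand.
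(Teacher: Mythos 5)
There is a genuine gap, and it sits at the very end of your Step~3, where the contradiction is supposed to close. You write that, since $\gamma$ is hyperbolic of index $i$ and the continued splitting $E_Y\oplus F_Y$ has index $i$, one gets $\|D\varphi^Y_t|_{E_Y}\|\le e^{-\eta' t}$ along $\gamma$ \emph{for a uniform $\eta'>0$}, and you then choose $\delta<\eta'$ beforehand. No such uniform $\eta'$ exists: hyperbolicity of a periodic orbit only gives a contraction rate depending on the orbit and on the perturbation, and the proposition has no star-type hypothesis (no Liao estimate as in Theorem~\ref{thm:fundamental-property} is available) that would make this rate uniform. Domination does not help either: it controls the gap between $E_Y$ and $F_Y$, not the absolute contraction of $E_Y$. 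So a hyperbolic periodic orbit of index $i$ whose stable bundle is contracted at an arbitrarily weak rate, while shadowing an $m^-$-generic segment (top $E$-exponent $\ge 0$) for most of its period, is perfectly consistent with all the hypotheses, and nothing is contradicted. More structurally, any scheme whose contradiction is extracted from properties of periodic orbits of \emph{perturbed} vector fields cannot work here, because the hypotheses of the proposition (domination over $\Lambda$, hyperbolic singularities, chain transitivity) place no constraint whatsoever on periodic orbits of nearby vector fields. Secondary issues in Step~3 compound this: chain transitivity only provides pseudo-orbits, and realizing them as true orbits requires the connecting lemma for pseudo-orbits rather than Theorem~\ref{thm.connecting-lemma}; cycle-breaking \`a la Bonatti--D\'iaz starts from hyperbolic periodic orbits, not from the measures $m^\pm$; and prescribing the sojourn-time proportion $1-o(1)$ near $\operatorname{supp}(m^-)$ needs Liao-selecting or ergodic-closing type control that you only allude to. Your Steps~1 and~2 are fine (and Step~2 is essentially the standard Cao-type argument), but they are not where the difficulty of the statement lies.

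The key idea you are missing is the one the paper's proof is built on: use chain transitivity, together with a cone field $\cC^F_\alpha$ around $F$ and the inclination lemma at the hyperbolic singularities (replacing the portions of pseudo-orbits that enter a neighborhood of a singularity by true orbit segments passing through fundamental domains of $W^s(\sigma)$ and $W^u(\sigma)$), to propagate the relation $X(y)\in\cC^F_\alpha(y)$ from one regular point to every regular point of $\Lambda$; the forward invariance and contraction of the cone field then force $X(y)\in F(y)$ everywhere. Once the flow direction is trapped in $F$, the zero Lyapunov exponent of every regular ergodic measure lies in $F$, so domination pushes all $E$-exponents below $-\eta$ for every ergodic measure (the singular measures being handled by hyperbolicity of the singularities), and uniform contraction of $E$ follows as in~\cite{BGY}. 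This is where chain transitivity and the hyperbolicity of the singularities really enter; in your proposal they are only used to travel between $\operatorname{supp}(m^-)$ and $\operatorname{supp}(m^+)$, which, as explained above, does not produce any contradiction.
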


Let us recall the notion of cone field. Given a continuous splitting  $TM|_{\Lambda}=E\oplus F$ over a compact set $\Lambda$, one defines at each point $x\in \Lambda$ a cone field around $F$ of angle $\alpha>0$   as  
$$\cC^F_\alpha(x)=\big\{v\in T_xM: v=v^E+v^F, v^E\in E, v^F\in F \textrm{ and } \|v^E\|\leq \alpha\|v^F\|\big\}.$$

\begin{proof}[Sketch of the proof of Proposition~\ref{p.uniform-bundle}]
Let $x_0$ be a regular point in $\Lambda$. Without loss of generality, one assume $X(x_0)\notin E(x_0)$.
Hence there exists $\alpha>0$ such that $X(x_0)\in \cC^F_{\alpha}(x_0).$
One extends the cone field $\cC^F_\alpha$ continuously to a neighborhood $U$ of $\Lambda$. By domination, up to shrinking $U$, there exist $T>0$ and $\lambda\in(0,1)$ such that $D\varphi_t(\cC^F_{\alpha}(y))\subset \cC_{\lambda^2\alpha}^F(\varphi_t(y))$ for any $t\geq T$ and any $y\in \cup_{s\in[0,t]}\varphi_{-s}(U)$.
Note that in the definition of chain-recurrence, there is no loss of generality if one only considers pseudo orbits,
whose times $t_i$ between the jumps are larger or equal to $T$ (see Section~\ref{s.preliminaries}.a).

\begin{Lemma}
 	For any regular point $y\in\Lambda\setminus\sing(X)$, one has $X(y)\in\cC^F_{\alpha}(y).$
 \end{Lemma}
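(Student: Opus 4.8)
The plan is to transport the cone condition ``$X(z)\in\cC^F_\alpha(z)$'' from the reference point $x_0$ to an arbitrary regular point $y\in\Lambda\setminus\sing(X)$ along a pseudo orbit, exploiting that the flow direction $\RR X$ is carried by the forward contracted cone field.

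Here is how I would organize it. Since $\Lambda$ is chain transitive, $y$ is chain attainable from $x_0$, so for $\e>0$ small there is an $\e$-pseudo orbit $x_0=w_0,w_1,\dots,w_n=y$ in $\Lambda$ whose jump times $t_i$ are all $\ge T$ (by the remark preceding the statement); subdividing long flow segments one may also take $t_i\le 2T$. As $\Lambda$ is invariant, each orbit segment $\{\varphi_s(w_i):0\le s\le t_i\}$ stays in $\Lambda\subseteq U$, so the estimate $D\varphi_t(\cC^F_\alpha(z))\subseteq\cC^F_{\lambda^2\alpha}(\varphi_t(z))$ for $t\ge T$ applies along it; since $D\varphi_t(X(z))=X(\varphi_t(z))$, this gives ``$X(w_i)\in\cC^F_\alpha(w_i)\Rightarrow X(\varphi_{t_i}(w_i))\in\cC^F_{\lambda^2\alpha}(\varphi_{t_i}(w_i))$''. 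To close the induction one must pass the jump $d(\varphi_{t_i}(w_i),w_{i+1})<\e$: on the complement of a fixed small neighborhood $W$ of $\Lambda\cap\sing(X)$ the norm $\|X\|$ is bounded below, so by uniform continuity of $X$, of the (extended) splitting $E\oplus F$ and of the cone fields, and using the gap between $\cC^F_{\lambda^2\alpha}$ and $\cC^F_\alpha$, there is $\e_0>0$ such that $p\notin W$, $X(p)\in\cC^F_{\lambda^2\alpha}(p)$ and $d(p,q)<\e_0$ force $X(q)\in\cC^F_\alpha(q)$. Provided the pseudo orbit can be chosen with every pre-jump point $\varphi_{t_i}(w_i)$ outside $W$, taking $\e<\e_0$ and inducting on $i$ gives $X(w_i)\in\cC^F_\alpha(w_i)$ for all $i$, in particular $X(y)\in\cC^F_\alpha(y)$.

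The main obstacle is exactly what happens near the singularities: when a pre-jump point is close to some $\sigma\in\Lambda\cap\sing(X)$ the vector $X$ is tiny there and an $\e$-jump can rotate its direction out of the cone. To handle it I would use that $\Lambda\cap\sing(X)$ is finite (hyperbolic singularities are isolated) and that $E\oplus F$ induces a dominated splitting $E(\sigma)\oplus F(\sigma)$ of $T_\sigma M$ for $(D\varphi_t)$; along the flow inside a small enough neighborhood of $\sigma$ the cone $\cC^F$ is still forward contracted, by continuity from the linear flow $D\varphi_t|_{T_\sigma M}$, so the cone condition survives the passage near $\sigma$. What remains is to control the directions $\RR X(z)$ of the points $z\in\Lambda$ lying near $\sigma$: lifting the situation to the projective bundle $\cG^1(\sigma)$ one gets a dominated splitting for $(\wh\Psi_t)$ over the closure of these directions, and Proposition~\ref{p.strong-stable-outside} then forces them to lie in (a cone around) $F(\sigma)$ — which is what allows one to route the pseudo orbit so that the cone condition is never lost. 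Carrying out this analysis near each singularity is, I expect, the delicate part of the argument.
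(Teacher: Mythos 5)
There is a genuine gap, and it sits exactly where you locate ``the delicate part'': the passage of the pseudo orbit near a singularity. Away from $\sing(X)$ your propagation argument is fine and matches the paper's conditions (e)--(f), but you cannot in general choose the pseudo orbit so that all pre-jump points avoid a neighborhood $W$ of the singularities (chain transitivity may force every pseudo orbit from $x_0$ to $y$ to cross such a neighborhood), and your proposed remedy does not work. You want to show that the directions $\RR X(z)$ of points $z\in\Lambda$ near $\sigma$ lie in a cone around $F(\sigma)$, but this is essentially the statement of the Lemma at those points, so invoking it is circular at the crucial step; moreover Proposition~\ref{p.strong-stable-outside} cannot supply it here: its hypotheses require a dominated splitting of the \emph{extended linear Poincar\'e flow} of index $i<\ind(\sigma)$ over a compact invariant subset of $\cG^1(\sigma)$ meeting the projective space of $E^u(\sigma)$, whereas in Proposition~\ref{p.uniform-bundle} one only has a dominated splitting $E\oplus F$ of the \emph{tangent} flow (indeed the whole point of that proposition is that no Poincar\'e-flow structure is assumed), and its conclusion (certain lines in $E^{ss}(\sigma)\oplus E^u(\sigma)$ are excluded) is not the cone statement you need.

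The paper's proof avoids controlling directions near $\sigma$ altogether. Using the hyperbolicity of the singularities it fixes fundamental domains $\Delta^s(\sigma),\Delta^u(\sigma)\subset V$ of $W^s(\sigma)$ and $W^u(\sigma)$, and by the Inclination lemma replaces every portion of the pseudo orbit that enters a small neighborhood $W_\sigma$ of $\sigma$ by a \emph{true} orbit segment running from the $\e_0$-neighborhood of $\Delta^s(\sigma)$ to that of $\Delta^u(\sigma)$ (properties (c) and (d) in the proof). Along a true orbit segment the cone condition propagates for free, since $D\varphi_t(X(z))=X(\varphi_t(z))$ and the cone field is forward contracted; and in the modified pseudo orbit every jump occurs either far from the singularities with size $<\e_2$, or inside $\Delta^*_{\e_0}(\sigma)$ with size $<4\e_0$, i.e.\ at a definite distance from $\sing(X)$ where $\|X\|$ is bounded below, so continuity (conditions (b) and (e)) closes the induction. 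This rerouting of the pseudo orbit through genuine connecting orbits is the missing idea in your proposal.
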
 	
\proof
Fix $y_0\in\Lambda\setminus\sing(X)$ and a small neighborhood $V\subset U$ of $\sing(X)\cap\Lambda$ such that $x_0,y_0\notin V$  and $\bigcup_{s\in [-2T,2T]}\varphi_s(V)\subset U$.
As all the singularities in $\Lambda$ are hyperbolic, for each singularity $\sigma\in\Lambda$, one fixes fundamental domains $\Delta^s(\sigma)\subset V$ and $\Delta^u(\sigma)\subset V$ of the stable and unstable manifolds of $\sigma$ respectively. For $\e_0>0$ small, let us denote $\Delta_{\e_0}^s(\sigma),\Delta_{\e_0}^u(\sigma)$  the $\e_0$-neighborhoods of $\Delta^s(\sigma),\Delta^u(\sigma)$ such that
\begin{itemize}
	\item[(a)] the sets $\{\Delta_{\e_0}^s(\sigma),\Delta_{\e_0}^u(\sigma)\}_{\sigma\in\Sing(X)\cap\Lambda}$ are pairwise disjoint and disjoint from $\Sing(X)$,
	\item[(b)] for any $x_1,x_2$ in a same $\Delta_{\e_0}^*(\sigma)$, if $X(x_1)\in\cC^F_{\lambda^2\alpha}(x_1)$ and $\ud(x_1,x_2)<4\e_0$ then $X(x_2)\in\cC^F_{\lambda\alpha}(x_2)$,
\end{itemize}
By the Inclination lemma, the following property holds:
\begin{itemize}
	\item[(c)]
for any  $\sigma\in\Lambda\cap\sing(X)$ and any points $x_1\in \Delta_{\e_0}^s(\sigma)$, $x_2\in\Delta^u_{\e_0}(\sigma)$, there exist $\wh x_1 \in\Delta_{\e_0}^s(\sigma)$, $\wh x_2\in \Delta_{\e_0}^u(\sigma)$ and $t>T$ such that $\ud (x_i,\wh x_i)<2\e_0$ and $\wh x_2=\varphi_t(\wh x_1)$.
\end{itemize}
As each singularity $\sigma\in\Lambda$ is hyperbolic, there exist $\e_1 $ small and a neighborhood $W_{\sigma}$ such that
\begin{itemize}
	\item[(d)] for any $x\in W_{\sigma}$ and any $\e_1$-pseudo orbit $\{z_i\}_{i=0}^k$,
\begin{itemize}
	\item if  $z_0=x$, $z_k\notin V$, there exist $0\leq i_0< k$ and $0\leq t\leq t_{i_0}$ such that $\varphi_t(z_{i_0})\in \Delta^u_{\e_0/2}(\sigma)$;
	\item  if $z_0\notin V$, $z_k=x$, there exist   $0\leq i_0< k$ and $0\leq t\leq t_{i_0}$ such that $\varphi_t(z_{i_0})\in \Delta^s_{\e_0/2}(\sigma)$.
\end{itemize} 
\end{itemize}
Let $W:=\cup_{\sigma\in\sing(X)\cap\Lambda} W_{\sigma}$. By continuity of $\cC^F_\alpha$, $\mathbb{R} X$ and $(\varphi_t)$, 
there exist $\e_2,\e_3>0$ such that
\begin{itemize}
	\item[(e)] for any $x,z\in U\setminus W$, if $\ud(x,z)<\e_2$ and $X(x)\in\cC^F_{\lambda\alpha}(x)$, then $X(z)\in\cC^F_\alpha(z),$ 
\item[(f)]  for any $x,y\in M $, if $\ud(x,z)<\e_3$, then $\ud(\varphi_s(x),\varphi_s(z))<\e_2$ for any $|s|\leq T$.
\end{itemize}

As $\Lambda$ is chain transitive, for $\e<\frac{1}{4}\min\{\e_0,\e_1,\e_2,\e_3 \}$ there exists an $\e$-pseudo orbit $\{z_i\}_{i=0}^l$ connecting $x_0$ to $y_0$,
with time $\{t_i\}_{i=0}^{l-1}$ larger than $2T$.
Let $I\subset\{0,\cdots,l \}$ be the set of the possible integers  $i$  such that $z_i\in W$. By (c) and (d), for each $i\in I$, there exist $0\leq i^-<i\leq i^+<l$ and times $0\leq t^-\leq t_{i^-}$,  $0\leq t^+\leq t_{i^+}$ such that $\varphi_{t^-}(z_{i^-})\subset \Delta_{\e_0}^s(\sigma)$ and $\varphi_{t^+}(z_{i^+})\in\Delta^u_{\e_0}(\sigma)$ for some singularity $\sigma$.
Then, there exist $y_{i}\in \Delta^s_{\e_0}(\sigma)$ and $\tau_i>T$ such that 
$$\max\big\{\ud(y_i, \varphi_{t^-}(z_{i^-})),\ud(\varphi_{\tau_i}(y_i),\varphi_{t^+}(z_{i^+}))\big\}<2\e_0 \textrm{ and } \varphi_{\tau_i}(y_i)\in\Delta^u_{\e_0}(\sigma).$$
Now, one replaces the pseudo orbit segment from $\varphi_{t^-}(z_{i^-})$ to $\varphi_{t^+}(z_{i^+})$ by the true orbit segment $\{\varphi_t(y_i )\}_{t\in[0,\tau_i]}$, for each $i\in I$.
If $t^-<T$, one replaces the pseudo orbit segment between
$\varphi_{-T+t^-}\circ\varphi_{t_{i^--1}}(z_{i^--1})$ and $\varphi_{t_{i^--1}}(z_{i^--1})$ by the orbit segment
between $\varphi_{-T+t^-}(z_{i^-})$ and $z_{i^-}$.

By the choice of $V$ and (f), one obtains in this way a new pseudo orbit connecting $x_0$ to $y_0$ such that 
\begin{itemize}
	\item the times between the jumps are larger than $T$;
	\item all the jumps avoid $W$;
	\item  each jump avoiding the sets $\Delta^*_{\e_0}(\sigma)$  has size smaller than $\e_2$;
	\item  each jump in $\Delta^*_{\e_0}(\sigma)$ has size smaller than $4\e_0$.
\end{itemize}
By (b), (e), the contraction of the cone field $\cC_\alpha^F$ gives $Y(y)\in\cC^F_{\alpha}(y).$
\qed

The contraction of the cone field $\cC^F_\alpha$ implies that $X(y)\in F(y)$ for any $y\in \Lambda$.
Then using the domination one concludes as in~\cite{BGY}  that $E$ is uniformly contracted.
\end{proof}

As a consequence, one builds robustly transitive flows whose tangent bundle does not admit any domination, which contrasts with~\cite{BDP}.
This shows that the dominated splittings should be rather   searched for the linear Poincar\'e flow than the tangent flow.

\begin{Proposition}\label{p.robust-transitive-example}
There exists an open set of $C^1$-vector fields with no singularity on a manifold of dimension $5$:
\begin{itemize}
\item whose dynamics is robustly transitive,
\item whose tangent flow does not admit any dominated splitting.
\end{itemize}
\end{Proposition}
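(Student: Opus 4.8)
The plan is to build the example by \emph{suspending} a well-chosen diffeomorphism and then perturbing it to destroy any dominated splitting of the tangent flow, while keeping robust transitivity. First I would start from a $C^1$-robustly transitive diffeomorphism $f$ of a $4$-manifold $N$ whose tangent bundle admits no dominated splitting; such examples exist by the work of Bonatti--D\'iaz--Pujals (for instance, a robustly transitive, non-partially-hyperbolic example built from a derived-from-Anosov construction on $\TT^4$, or more directly the $4$-dimensional robustly transitive diffeomorphisms with no dominated splitting constructed by Bonatti--Viana). Form the suspension flow $(\psi_t)$ on the $5$-manifold $M=N\times[0,1]/(x,1)\sim(f(x),0)$, generated by a vector field $X_0$ with no singularity. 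The suspension flow is robustly transitive as a flow (transitivity of $f$ passes to the suspension, and this is $C^1$-robust since the property is robust for $f$ and the suspension construction is $C^1$-continuous).

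The key point is that the suspension $X_0$ itself \emph{does} admit an obvious dominated splitting of the tangent flow coming from $TM|_{N\times\{0\}} = TN \oplus \RR X_0$: the flow direction is a trivial summand, so one would wrongly inherit the (absence of) dominated splitting of $f$ only on the $TN$ factor. So $X_0$ is not yet the desired example — the flow direction must be ``rotated into'' the would-be invariant subbundles. The second step is therefore to perturb $X_0$ inside $\cX^1(M)$ to a vector field $X$ for which the line field $\RR X$ is nowhere tangent to any candidate dominated subbundle, in a robust way. Concretely, I would compose the suspension with a small rotation in well-chosen $2$-planes (mixing one direction of $TN$ with the flow direction along a cross-section), arranged so that: (a) transitivity is preserved and remains robust — this is automatic because the perturbation is $C^1$-small and robust transitivity is $C^1$-open; and (b) any $(D\varphi_t)$-invariant dominated splitting $TM|_M = E\oplus F$ would, by Proposition~\ref{p.uniform-bundle} applied to the chain-transitive set $\Lambda = M$ (which has no singularities), force $E$ uniformly contracted or $F$ uniformly expanded. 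But a transitive flow on a closed manifold cannot have a uniformly contracted or uniformly expanded subbundle of positive dimension over the whole manifold (the contracted bundle would force volume collapse incompatible with recurrence, e.g. by an averaging/Poincar\'e-recurrence argument, or simply because a uniformly contracted $E$ of positive dimension over a transitive set gives attracting behaviour contradicting minimality of the chain-recurrence class being all of $M$ together with recurrence). Hence no dominated splitting of the tangent flow can exist, and this conclusion is $C^1$-robust since the hypotheses of Proposition~\ref{p.uniform-bundle} are robustly satisfied (all of $M$, no singularities, robust chain transitivity).

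In fact the cleanest route is to invoke Proposition~\ref{p.uniform-bundle} as a black box: for \emph{any} $Y$ in a small $C^1$-neighborhood of $X$, the maximal invariant set is all of $M$, it is chain transitive (by robust transitivity), and it contains no singularity; so if $Y$ had a dominated splitting of its tangent flow we would get a uniformly contracted or uniformly expanded subbundle over the closed manifold $M$. Ruling that out is the one genuinely dynamical input I still owe: I would argue that a positive-dimensional uniformly contracted subbundle $E$ over all of $M$ makes $M$ an attractor with $E$ its strong-stable bundle, so the complementary behaviour forces the non-wandering set to be proper or the flow to be (partially) hyperbolic with a dominated splitting of $TN$ surviving under suspension, contradicting the choice of $f$.

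The main obstacle I expect is step two: engineering the perturbation $X$ so that simultaneously (i) robust transitivity is retained — easy, since it is $C^1$-open — and (ii) one can genuinely certify, robustly, that no invariant dominated splitting of the tangent flow exists. The subtlety is that destroying the ``visible'' splitting $TN\oplus\RR X$ is not enough a priori: one must preclude \emph{all} invariant splittings of every index, which is exactly what Proposition~\ref{p.uniform-bundle} is designed to reduce to a uniform-hyperbolicity statement, and then the work is to show the underlying robustly transitive $4$-dimensional diffeomorphism $f$ cannot suspend to a flow with a uniformly hyperbolic sub-behaviour of any intermediate index — which follows from $f$ having no dominated splitting, since a uniformly contracted (or expanded) subbundle of the suspension restricts to a dominated, indeed uniform, invariant subbundle for $Df$ along the cross-section.
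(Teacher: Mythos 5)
Your starting point does not exist: by \cite{BDP} (cited in the introduction of this paper), every $C^1$-robustly transitive diffeomorphism admits a dominated splitting, so there is no robustly transitive diffeomorphism of a $4$-manifold ``whose tangent bundle admits no dominated splitting''. In particular the Bonatti--Viana examples you invoke do have a dominated splitting $T\TT^4=E\oplus F$ with $\dim E=\dim F=2$; what they lack is any uniformly contracted or uniformly expanded subbundle, and neither $E$ nor $F$ admits a finer dominated splitting. Relatedly, your claim that the unperturbed suspension ``admits an obvious dominated splitting $TN\oplus\RR X_0$'' is incorrect: an invariant direct-sum decomposition is not automatically dominated. Since the flow direction has bounded growth, domination of $TN\oplus\RR X$ would force $TN$ to be uniformly contracted, and domination of $\RR X\oplus TN$ would force uniform expansion; neither holds. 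This is precisely why no perturbation or ``rotation'' step is needed: the suspension of the Bonatti--Viana diffeomorphism already has no dominated splitting of its tangent flow.

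The correct mechanism is the transfer argument you only sketch in your final sentence, and it is the paper's route: if the suspension flow had a dominated splitting $E_1\oplus E_2$, then Proposition~\ref{p.uniform-bundle}, applied to the whole (chain transitive, singularity-free) manifold, forces $E_1$ to be uniformly contracted or $E_2$ uniformly expanded; the flow direction then lies in the complementary bundle, and restricting to the global cross-section one obtains a dominated splitting for $f$ with a uniformly contracted (or expanded) bundle, contradicting the properties of the Bonatti--Viana example (its unique dominated splitting is $E\oplus F$ and neither bundle is uniform). By contrast, the substitute you propose --- ``a transitive flow on a closed manifold cannot have a uniformly contracted subbundle of positive dimension over the whole manifold'' --- is false: transitive Anosov flows, e.g.\ geodesic flows in negative curvature, have exactly such a bundle, so the volume-collapse/recurrence argument cannot work. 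Thus the two load-bearing steps of your proposal (the choice of $f$ and the way uniform subbundles are excluded) both fail; once $f$ is taken as in \cite{BV} and the cross-section transfer is carried out, the perturbation step is unnecessary and the argument reduces to the paper's proof.
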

\begin{proof}
\cite{BV} builds a robustly transitive diffeomorphism $f$ (i.e. each nearby $C^1$-diffeomorphism admits a dense orbit) on $\TT^4$ such that 
\begin{itemize}
	\item $f$ admits a dominated splitting of the form $T\TT^4=E\oplus F$ where $\dim(E)=\dim(F)=2.$
	\item  $E$ is neither uniformly contracting nor uniformly expanding, and so is $F$. 
	\item  Neither $E$ nor $F$ can be split into non-trivial dominated sub-bundles.
\end{itemize}
One considers the suspension of $f$ and one gets a $C^1$-vector field $X_f$ which is also robustly transitive and has no-singularities. If the tangent flow of $X_f$ admits a dominated splitting $E_1\oplus E_2$, then the robust transitivity and Proposition~\ref{p.uniform-bundle} imply that $E_1$ or $E_2$ is hyperbolic, which in return implies that for $f$  the bundle $E$ or $F$ is uniformly hyperbolic. In summary, the tangent flow of $X_f$ does not admit domination.
\end{proof}

Proposition~\ref{p.uniform-bundle} shows that a dominated splitting of the tangent flows implies a dominated splitting of the linear Poincar\'e flow.
The next proposition (due to~\cite[Lemma 2.13]{Gan-Yang}) gives a criterion to obtain a dominated splitting on the tangent flow when the linear Poincar\'e flow is dominated.

\begin{Proposition}\label{p.domination-criterion}
Let $\Lambda$ be an invariant compact set for $X\in\cX^1(M)$ with a dominated splitting $\cN|_{\Lambda\setminus\Sing(X)}=\cN_1\oplus\cN_2$
of index $i$ for the linear Poincar\'e flow. Assume furthermore that:
\begin{itemize}
\item $\Lambda\setminus\Sing(X)$ is dense in $\Lambda$,
\item there exist  $\eta,T>0$ such that
$\|\Psi_t|_{\cN_1}\|\leq e^{-\eta t}\frac{\|X(\varphi_t(x))\|}{\|X(x)\|}$ for all $x\in\Lambda$ and $t>T$.
\end{itemize}
Then the tangent flow over $\Lambda$ admits a dominated splitting $TM|_\Lambda=E^s\oplus F$ of index $i$
and $E^s$ is uniformly contracted.
\end{Proposition}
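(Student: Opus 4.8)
The plan is to construct $E^s$ over the regular part $\Lambda\setminus\Sing(X)$ as a $(D\varphi_t)$‑invariant modification of $\cN_1$, to analyse its limit at each singularity of $\Lambda$, and then to read off the uniform contraction and the domination from the two hypotheses. Over $\Lambda\setminus\Sing(X)$ the bundle $\cN_1\oplus\RR X$ is $(D\varphi_t)$‑invariant (indeed $\Psi_t$ preserves $\cN_1$ and $D\varphi_t v-\Psi_t v$ is collinear with $X(\varphi_t(x))$). Writing $D\varphi_t(v)=\Psi_t(v)+\beta_t(v)\,X(\varphi_t(x))$ for $v\in\cN_1(x)$, the scalars form an additive cocycle, $\beta_{s+t}(v)=\beta_t(v)+\beta_s(\Psi_t(v))$, so $\beta_{n+1}(v)-\beta_n(v)=\beta_1(\Psi_n(v))$; combining the elementary bound $|\beta_1(w)|\le\|D\varphi_1\|\,\|w\|/\|X(\varphi_1(\cdot))\|$, the second hypothesis, and the boundedness over $\Lambda$ (for unit time) of the ratios $\|X(\varphi_n(x))\|/\|X(\varphi_{n+1}(x))\|$, this telescoping series is dominated by a geometric one. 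Hence $\beta_\infty(x)(v):=\lim_{t\to+\infty}\beta_t(v)$ exists, is linear and continuous in $v$, and $\|\beta_\infty(x)\|\cdot\|X(x)\|$ is uniformly bounded on $\Lambda\setminus\Sing(X)$. Setting
$$E^s(x):=\{\,v-\beta_\infty(x)(v)\,X(x):v\in\cN_1(x)\,\},$$
the cocycle identity (together with $\beta_\infty(\varphi_t(x))(\Psi_t v)=\beta_\infty(x)(v)-\beta_t(x)(v)$) gives $D\varphi_t(E^s(x))=E^s(\varphi_t(x))$; the bound on $\|\beta_\infty\|\cdot\|X\|$ makes $E^s$ a continuous $i$‑dimensional subbundle uniformly transverse to $\RR X$ inside $\cN_1\oplus\RR X$, and a short computation yields
$$\|D\varphi_t|_{E^s(x)}\|\ \le\ C\,\|\Psi_t|_{\cN_1(x)}\|\ \le\ C\,e^{-\eta t}\,\frac{\|X(\varphi_t(x))\|}{\|X(x)\|}\qquad(t>T)$$
for a uniform constant $C$.

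The part I expect to be the main obstacle is pinning down the behaviour of $E^s$ at each $\sigma\in\Lambda\cap\Sing(X)$. Passing to the projective bundle $\cG^1$, let $\wh\Lambda$ be the closure of $\{\RR X(x):x\in\Lambda\setminus\Sing(X)\}$; the splitting $\cN_1\oplus\cN_2$ extends to a dominated splitting $\wh\cN_1\oplus\wh\cN_2$ of the extended linear Poincar\'e flow over $\wh\Lambda$, and passing to the limit in the estimate above gives $\|\wh\Psi_t|_{\wh\cN_1(L)}\|\le e^{-\eta t}\|D\varphi_t|_L\|$ for $L\in\wh\Lambda$. Running the argument of Proposition~\ref{p.strong-stable-outside} over the invariant set $\wh\Lambda\cap\cG^1(\sigma)$ then shows that $\sigma$ is hyperbolic, that its stable space contains a strong stable subspace $E^{ss}(\sigma)$ with $\dim E^{ss}(\sigma)=i$, and that the $i$‑plane obtained from $\wh\cN_1(L)$ and the tautological line $L$ by the same graph construction is contained in, hence equals, $E^{ss}(\sigma)$ independently of $L\in\wh\Lambda\cap\cG^1(\sigma)$. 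Since $E^s$ is close to $\cN_1$ near $\sigma$ (because $\|\beta_\infty\|\cdot\|X\|$ is bounded and $\|X\|\to0$), it follows that $E^s$ extends continuously over $\Lambda$ with $E^s(\sigma)=E^{ss}(\sigma)$.

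To establish uniform contraction of $E^s$ over $\Lambda$ I would argue by contradiction using the standard criterion that a continuous invariant subbundle over a compact invariant set is uniformly contracted if and only if its top Lyapunov exponent is negative for every ergodic invariant measure $\mu$. If $\mu$ is regular then, at $\mu$‑a.e.\ point $x$, the top exponent along $E^s$ is at most $\limsup_t\tfrac1t\log\|\Psi_t|_{\cN_1(x)}\|\le-\eta<0$ by the second hypothesis (the term $\tfrac1t\log(\|X(\varphi_t(x))\|/\|X(x)\|)\to0$ since $\|X\|$ is bounded and $\|X(x)\|>0$), a contradiction; if $\mu=\delta_\sigma$ with $\sigma\in\Lambda\cap\Sing(X)$ it contradicts $E^s(\sigma)=E^{ss}(\sigma)$ from the previous step. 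Hence $E^s$ is uniformly contracted.

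Finally, having $E^s$ uniformly contracted — hence uniformly expanded by $(D\varphi_{-t})$ — I would take $F$ to be the invariant complement of $E^s$ produced by the (backward) graph transform, which now converges; over $\Lambda\setminus\Sing(X)$ it is of the form $\RR X$ plus a graph over $\cN_2$, and since dominated splittings extend from a dense subset to the closure it suffices to verify domination there. The required inequality $\|D\varphi_t|_{E^s(x)}\|\cdot\|D\varphi_{-t}|_{F(\varphi_t(x))}\|<e^{-\eta' t}$ follows from the estimate $\|D\varphi_t|_{E^s}\|\le C\|\Psi_t|_{\cN_1}\|$: the $\cN_2$‑component of $\|D\varphi_{-t}|_{F}\|$ is cancelled by the Poincar\'e‑flow domination $\|\Psi_t|_{\cN_1}\|\cdot\|\Psi_{-t}|_{\cN_2}\|<e^{-\eta t}$, and the $\RR X$‑component is cancelled by the second hypothesis through the identity $\|D\varphi_{-t}|_{\RR X(\varphi_t(x))}\|=\|X(x)\|/\|X(\varphi_t(x))\|$, the residual shear between $\cN_2$ and $\RR X$ inside $F$ being uniformly bounded. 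Thus $TM|_\Lambda=E^s\oplus F$ is a dominated splitting of index $i$ with $E^s$ uniformly contracted, as claimed.
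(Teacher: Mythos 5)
Your construction of $E^s$ over the regular part is sound and is the standard one: the shear cocycle $\beta_t$, the geometric convergence of $\beta_\infty$ from the hypothesis $\|\Psi_t|_{\cN_1}\|\le e^{-\eta t}\|X(\varphi_t(x))\|/\|X(x)\|$, the uniform bound on $\|\beta_\infty(x)\|\cdot\|X(x)\|$, the invariance identity, and the comparison $\|D\varphi_t|_{E^s(x)}\|\le C\,\|\Psi_t|_{\cN_1(x)}\|$ are all correct, as is the exponent estimate for regular ergodic measures. (For the record, the paper does not prove this proposition; it quotes it from Gan--Yang, so your attempt has to stand on its own.)

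The genuine gap is at the singularities, which is exactly the heart of the statement. First, you cannot "run the argument of Proposition~\ref{p.strong-stable-outside}" here: that proposition \emph{assumes} $\sigma$ hyperbolic, $i<\ind(\sigma)$ and that $\wh\Lambda$ meets the projective space of $E^u(\sigma)$ — none of which is available — and it does not conclude hyperbolicity; moreover hyperbolicity of $\sigma$ neither follows from the present hypotheses nor is needed. What \emph{is} needed, and what your sketch never establishes, is that the limit of $E^s$ at each $\sigma\in\Lambda\cap\Sing(X)$ is well defined (different accumulation lines $L\in\wh\Lambda\cap\cG^1(\sigma)$ of the directions $\RR X(x_n)$ a priori give different graph planes) and uniformly contracted, i.e.\ that $DX(\sigma)$ has at least $i$ eigenvalues of negative real part separated by a gap from the rest. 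The only estimate your construction yields over $\cG^1(\sigma)$ is $\|D\varphi_t|_{\wh E^s(L)}\|\le C e^{-\eta t}\|D\varphi_t|_L\|$, a contraction \emph{relative to the line} $L$; when $L$ is an expanded (or neutral) direction this gives nothing negative, so the Dirac-measure case of your ergodic criterion has no input, and the criterion itself cannot even be invoked before $E^s$ is continuously extended to the compact set $\Lambda$. Producing the negativity and the gap at $\sigma$ requires an actual argument exploiting which lines of $\cG^1(\sigma)$ can be accumulated by flow directions of regular orbits of $\Lambda$ (this is where the density hypothesis and the domination $\cN_1\oplus\cN_2$ must enter), and your proposal simply asserts the conclusion ($E^s(\sigma)=E^{ss}(\sigma)$) instead of proving it. A secondary but real issue is the final domination step: the claim that the "residual shear between $\cN_2$ and $\RR X$ inside $F$ is uniformly bounded" is unsupported — the component along the flow direction of $D\varphi_t(v)$, $v\in\cN_2(x)$ unit, is only bounded by $\|D\varphi_t\|/\|X(\varphi_t(x))\|$, which blows up near $\Sing(X)$ — so the domination of $E^s\oplus F$ near the singularities again needs a genuine argument (for instance carried out for the extended linear Poincar\'e flow and the tautological line bundle over the compact set $\wh\Lambda$), not just the bookkeeping indicated.
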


\subsection{Robustness of the singular domination: proof of Theorem~\ref{p.robust-singular-domination}}
Due to the lack of compactness of the linear Poincar\'e flow over $M\setminus \Sing(X)$, the robustness of the singular domination
is not a direct consequence of the robustness of dominated splittings for continuous linear cocycles over compact spaces.
We now state and prove a more precise version of Theorem~\ref{p.robust-singular-domination}.
\begin{Theorem}\label{t.Aprime}
Let $X\in\cX^1(M)$, $\eta,T>0$ and $\Lambda$ be a compact invariant set admitting a singular dominated splitting $\cN|_{\Lambda\setminus\Sing(X)}=\cN_1\oplus\cN_2$ of index $i$
which is $(\eta, T)$-dominated.

Then there exist  a $C^1$-neighborhood $\cU$ of $X$ and a neighborhood $U$ of $\Lambda$ such that for each $Y\in\cU$, the maximal invariant set $\Lambda_Y$ of $Y$ in $U$ admits a 
singular dominated spliting of index $i$ which is $(\eta, T)$-dominated. 
\end{Theorem}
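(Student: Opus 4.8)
The plan is to choose the neighborhood $U$ carefully, exploiting that the two clauses of Definition~\ref{d.singular-domination}(ii) are open conditions, and then to build a pair of complementary cone fields on $U$ that are strictly contracted by the linear Poincar\'e flow of every nearby vector field along orbit segments contained in $U$; the invariant subbundles trapped in these cones provide the desired splitting. All the genuine work is concentrated near the singularities.

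\textbf{Step 1: choice of $U$.} For each stable-type singularity $\sigma\in\Lambda\cap\Sing(X)$, with $T_\sigma M=E^{ss}\oplus F$ as in Definition~\ref{d.singular-domination}(ii), fix a small radius $r_\sigma>0$; since $W^{ss}(\sigma)\cap\Lambda=\{\sigma\}$, the compact set $W^{ss}_{\mathrm{loc}}(\sigma)\cap\{\ud(\cdot,\sigma)=r_\sigma\}$ is disjoint from $\Lambda$, hence at distance $\delta_\sigma>0$ from $\Lambda$; dually for unstable-type singularities. Taking $U$ of radius $<\tfrac12\min_\sigma\delta_\sigma$, the maximal invariant set $\Lambda_X=\bigcap_t\varphi^X_t(\overline U)$ contains no orbit entering a stable-type singularity along its strong stable manifold (such an orbit would have to cross that sphere and leave $U$) and, dually, no orbit leaving an unstable-type singularity along its strong unstable manifold. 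Shrinking $U$ further --- using that $\Sing(X)$ is closed and that eigenvalues and local (strong) invariant manifolds vary continuously with the vector field --- one arranges that every singularity $\sigma'$ of $X$ in $\overline U$ still carries a dominated splitting of the same type, with the corresponding bundle uniformly contracted (resp.\ expanded) of the correct dimension and $W^{ss}_{\mathrm{loc}}(\sigma')\cap\Lambda_X=\{\sigma'\}$ (resp.\ its dual), by the same distance estimate. Since all of these are open conditions, there is a $C^1$-neighborhood $\cU$ of $X$ such that every $Y\in\cU$ shares them; in particular Definition~\ref{d.singular-domination}(ii) holds for $\Lambda_Y=\bigcap_t\varphi^Y_t(\overline U)$ as soon as its first clause is established.

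\textbf{Step 2: domination of the linear Poincar\'e flow.} Extend $\cN_1\oplus\cN_2$ to continuous complementary cone fields $\cC_1$ (dimension $i$) and $\cC_2$ (dimension $\dim M-1-i$) on $U\setminus\Sing(X)$: adapted to $\cN_1,\cN_2$ away from the singularities and, near a singularity $\sigma$, to the images in $\cN$ of $E^{ss}(\sigma),F(\sigma)$ in the stable case (of $E(\sigma),E^{uu}(\sigma)$ in the unstable case). The claim is that for every $Y\in\cU$ and every $t\ge T$, along any orbit segment $x,\varphi^Y_{[0,t]}(x)\subset U$ with $x,\varphi^Y_t(x)\in\Lambda_Y$ one has $\Psi^Y_t(\cC_1(x))\subset\mathrm{int}\,\cC_1(\varphi^Y_t(x))$ with the quantitative $(\eta,T)$-domination estimate between $\cC_1$ and $\cC_2$. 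Away from the singularities this is the classical robustness of dominated cone fields over the compact set $\overline U$ minus small balls around $\Sing(X)$. Near a stable-type singularity $\sigma$, the exile arranged in Step~1 keeps the orbit direction transverse to $E^{ss}(\sigma)$, so $\cN(\varphi^Y_s(x))$ contains an $i$-plane close to $E^{ss}(\sigma)$ on which $\Psi^Y_s$ contracts at a rate inherited from the uniform contraction of $E^{ss}(\sigma)$, while on the complementary plane close to $F(\sigma)$ it is controlled by the domination $E^{ss}\oplus F$ --- here the tangent spectral gap feeds into the cone contraction uniformly in the length of the sojourn near $\sigma$; dually near unstable-type singularities. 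Chaining these estimates over the alternating visits to neighborhoods of $\Sing(X)$ and to their complement yields the claim. Then $\cN^Y_1(x)=\bigcap_{t\ge0}\Psi^Y_t(\cC_1(\varphi^Y_{-t}(x)))$ and $\cN^Y_2(x)=\bigcap_{t\ge0}\Psi^Y_{-t}(\cC_2(\varphi^Y_t(x)))$ define, over $\Lambda_Y\setminus\Sing(Y)$, a continuous $(\Psi^Y_t)$-invariant splitting of index $i$ which spans $\cN$ and is $(\eta,T)$-dominated. With Step~1 this proves that $\Lambda_Y$ is singular dominated of index $i$ and $(\eta,T)$-dominated. Equivalently, Step~2 can be carried out on the extended linear Poincar\'e flow: the splitting extends to a dominated splitting of the continuous cocycle $(\widehat\Psi^X_t)$ over the compact set $\overline{\{\RR X(x):x\in\Lambda_X\setminus\Sing(X)\}}\subset\cG^1$ --- the extension over the fibers $\cG^1(\sigma)$ being the content of Proposition~\ref{p.strong-stable-outside} --- after which one invokes the robustness of dominated splittings over compact spaces~\cite[Appendix B.1]{BDV} together with the $C^1$-continuity of $Y\mapsto(\widehat\varphi^Y_t,\widehat\Psi^Y_t)$ and the upper semicontinuity of maximal invariant sets.

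\textbf{The main obstacle} is the near-singularity estimate in Step~2: controlling $\Psi^Y_t$ along orbit segments of $\Lambda_Y$ that spend an arbitrarily long time near a singularity $\sigma$, and showing that the $i$-dimensional cone contracts strictly faster than its complement \emph{uniformly} in the length of such a segment. This is exactly what the clause $W^{ss}(\sigma)\cap\Lambda=\{\sigma\}$ makes possible --- it keeps the orbit direction transverse to $E^{ss}(\sigma)$, so the uniform contraction of $E^{ss}(\sigma)$ can be transferred to the linear Poincar\'e flow --- and its failure for an unrestricted dominated splitting of the linear Poincar\'e flow is precisely the non-robustness mentioned before the statement. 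A further necessary point, secured in Step~1, is that $\Lambda_X$ itself (not merely $\Lambda$) must avoid the forbidden strong directions, so that the argument applies to the maximal invariant set in $U$ and not only to $\Lambda$.
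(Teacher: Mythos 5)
Your overall architecture matches the paper's: first make clause (ii) of Definition~\ref{d.singular-domination} robust (your Step~1 plays the role of the paper's Lemma~\ref{l.robust-escaping}), then transfer the domination to nearby vector fields either by cone fields or by compactifying in $\cG^1$ and quoting robustness of dominated splittings over compact sets. The problem is that the crux is asserted rather than proved, in both of your routes. The heart of the paper's proof is the Claim that for $Y_n\to X$ and points $x_n$ of the perturbed maximal invariant sets converging to a stable-type singularity $\sigma$, every limit of the lines $\RR Y_n(x_n)$ lies in the projective space of $F(\sigma)$. This is exactly your sentence ``the exile keeps the orbit direction transverse to $E^{ss}(\sigma)$'', and it is not a formal consequence of $W^{ss}(\sigma)\cap\Lambda_Y=\{\sigma\}$: one must argue that if a limit direction had a nonzero $E^{ss}$-component, then by the domination $E^{ss}\oplus F$ some point on the backward orbit has its flow direction inside a backward-invariant strong stable cone near $\sigma$, so the backward orbit must exit the neighborhood, and the exit points accumulate on a point of $W^{ss}(\sigma)\setminus\{\sigma\}$ that belongs to the maximal invariant set --- contradicting the exile. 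Without this argument, your route (a) has no uniform cone contraction during arbitrarily long sojourns near $\sigma$ (the phrase ``the tangent spectral gap feeds into the cone contraction uniformly in the length of the sojourn'' is precisely what requires the transversality, together with angle bounds and the estimate that the induced maps on the image of $E^{ss}$ in $\cN$ and on the quotient of $F$ inherit the tangent norms/conorms); and your route (b) has no upper semicontinuity of the relevant compact sets: what you invoke is upper semicontinuity of $\Lambda_Y$ in $M$, whereas what is needed is upper semicontinuity in $\cG^1$ of the lifted sets, which the paper obtains by enlarging the lift to $K_Y$, adjoining the full projective spaces of $F(\sigma)$ and $E(\sigma)$, and proving the Claim. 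Note also that Proposition~\ref{p.strong-stable-outside} does not provide ``the extension over the fibers $\cG^1(\sigma)$'': it constrains a set that is already assumed dominated; it neither produces the domination over $\PP F(\sigma)$ nor the needed semicontinuity.

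A secondary gap is in Step~1 itself: you treat $\Sing(X)\cap\Lambda$ as a finite collection of hyperbolic singularities (per-singularity radii $r_\sigma$, a minimum $\min_\sigma\delta_\sigma$, ``eigenvalues vary continuously''), but Definition~\ref{d.singular-domination} only requires a dominated splitting with a uniformly contracted (or expanded) extreme bundle at each singularity, the singularities need not be hyperbolic, and $\Sing(X)\cap\Lambda$ may be infinite; moreover singularities of nearby $Y$ in $U$ need not be continuations of singularities of $X$. The paper handles this by covering the compact sets of stable-type and unstable-type singularities by two open sets $V^\pm$ and applying Lemma~\ref{l.robust-escaping} to these compact sets, so that both the splittings at all singularities of $Y$ in $U$ and the exile of their strong invariant manifolds from the maximal invariant set persist uniformly. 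Your distance-to-the-sphere idea is fine for a single partially hyperbolic singularity, but as written it does not deliver this uniform statement.
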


Before proving the robustness of a singular domination, we need some preparation.
\begin{Lemma}~\label{l.robust-escaping}
	Let $X\in\cX^1(M)$ and let $K\subset \Lambda$ be  two invariant compact sets. Assume that  
	\begin{itemize}
		\item $K$ admits a partially hyperbolic splitting $T_K M=E^{ss}\oplus F$ for the tangent flow of index $i$;
		\item  for any $x\in K$, one has $W^{ss}(x)\cap \Lambda=\{x\}$.
	\end{itemize} 
Then there exist   neighborhoods $\cU$ of $X$, $V$ of $K$ and $U$ of $\Lambda$ such that for any $Y\in \cU$,
\begin{itemize}
	\item the maximal invariant set $K_Y$ in $V$ admits a partially hyperbolic splitting of index $i$;
	\item the maximal invariant set $\Lambda_Y$ in $U$ satisfies $W^{ss}(x)\cap\Lambda_Y=\{x \}$ for any $x\in K_Y.$
\end{itemize}
 
\end{Lemma}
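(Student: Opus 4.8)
The plan is to prove Lemma~\ref{l.robust-escaping} in two independent parts: first the persistence of a partially hyperbolic splitting for the tangent flow over $K$, and then — the substantial part — the persistence of the "escaping" property $W^{ss}(x)\cap\Lambda=\{x\}$. The first part is entirely standard: since $T_KM=E^{ss}\oplus F$ is a dominated splitting with $E^{ss}$ uniformly contracted for the tangent flow over the compact set $K$, one has robustness of dominated/partially hyperbolic splittings for continuous linear cocycles over compact spaces. Concretely, fix a small cone field $\cC^F_\alpha$ around $F$ extended to a neighborhood, note that $D\varphi_T$ strictly contracts it and uniformly contracts the complementary bundle; these are open conditions, so for $Y$ $C^1$-close to $X$ and $V$ a small neighborhood of $K$, the maximal invariant set $K_Y$ in $V$ carries a $DY$-invariant splitting $E^{ss}_Y\oplus F_Y$ of index $i$ with $E^{ss}_Y$ uniformly contracted. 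I would only sketch this and refer to~\cite[Appendix B.1]{BDV}.

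For the escaping property, I would argue by contradiction. Suppose there is no triple of neighborhoods as claimed; then there are $Y_n\to X$ in $\cX^1(M)$, neighborhoods $V_n\downarrow K$ and $U_n\downarrow\Lambda$, points $x_n\in K_{Y_n}$ (the maximal invariant set of $Y_n$ in $V_n$) and points $z_n\neq x_n$ with $z_n\in W^{ss}_{Y_n}(x_n)\cap\Lambda_{Y_n}$. The strong stable manifolds $W^{ss}_{Y_n}(x_n)$ depend continuously (in the $C^1$ topology, on compact pieces) on $(Y_n,x_n)$ by the stable manifold theorem for the robust partially hyperbolic splitting $E^{ss}_{Y_n}\oplus F_{Y_n}$ of Part~1. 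I would normalize: since $z_n$ lies on $W^{ss}_{Y_n}(x_n)$, its $Y_n$-forward orbit converges to that of $x_n$, so after flowing forward we may assume $z_n$ lies on a fixed-size local strong stable disk $W^{ss}_{Y_n,\mathrm{loc}}(x_n)$; moreover we may assume $\operatorname{d}(x_n,z_n)$ is bounded below (else, after flowing and using uniform contraction, $z_n$ would re-enter an arbitrarily small neighborhood of $x_n$, contradicting $z_n\ne x_n$ being uniformly separated once we quotient by the flow — one must be a little careful here and instead just extract the limit directly). Passing to a subsequence, $x_n\to x\in K$ and $z_n\to z\in\Lambda$ with $z\in W^{ss}_X(x)$; it remains to rule out $z=x$. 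This is where the lower bound on a suitable "separation after flowing" is used: if $z_n\to x$ one flows the pair $(\varphi^{Y_n}_{t_n}(x_n),\varphi^{Y_n}_{t_n}(z_n))$ forward until they reach a definite distance inside the local strong stable disk (possible since $z_n\in W^{ss}_{Y_n}(x_n)\setminus\{x_n\}$ and contraction on $E^{ss}$ is not a contraction for the flow itself only along the orbit, but transversally it is uniform), obtaining new points whose limits are $x'\in K$ and $z'\in\Lambda$ with $z'\in W^{ss}_X(x')\setminus\{x'\}$, contradicting the hypothesis $W^{ss}(x')\cap\Lambda=\{x'\}$.

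The main obstacle, and the place requiring care, is this last normalization/compactness step: one must ensure that the "witness" points $z_n$ do not escape to $x_n$ or drift off along the flow direction when passing to the limit, so that the limiting witness $z$ is genuinely distinct from $x$ and genuinely in $W^{ss}_X(x)\cap\Lambda$. The clean way is: because $E^{ss}_{Y_n}$ is \emph{uniformly} contracted by the tangent flow with uniform rates $(\eta,T)$ (independent of $n$, up to shrinking $\cU$), the local strong stable disks $W^{ss}_{Y_n,\mathrm{loc}}(x_n)$ have uniform size, and the "holonomy" moving a point of this disk forward to the orbit of $x_n$ is uniformly controlled; so if $z_n\in W^{ss}_{Y_n}(x_n)$ with $z_n\ne x_n$, flowing $z_n$ forward by a suitable time $s_n\ge 0$ brings it to a point $z_n'$ on the boundary of (or at a definite distance $r$ inside) $W^{ss}_{Y_n,\mathrm{loc}}(x_n')$ where $x_n'=\varphi^{Y_n}_{s_n}(x_n)$. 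Since $K_{Y_n}\subset V_n\to K$ and $\Lambda_{Y_n}\subset U_n\to\Lambda$ and both are flow-invariant, one has $x_n'\in V_n$, $z_n'\in U_n$; extracting limits gives $x'\in K$ and $z'\in\Lambda$ with $z'\in W^{ss}_X(x')$ and $\operatorname{d}_{W^{ss}}(x',z')=r>0$, the desired contradiction. I would present the argument in this order, with Part~1 compressed to a citation and the bulk of the writing devoted to the uniform-size strong stable disks and the limiting argument of Part~2.
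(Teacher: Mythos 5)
Your overall strategy coincides with the paper's: robustness of the partially hyperbolic splitting over $K$ gives the first item, and the second item is proved by contradiction, using the continuous dependence of local strong stable manifolds on the point and on the vector field, normalizing the witness point to lie at a definite strong stable distance from the base point, and passing to the limit to contradict $W^{ss}(x)\cap\Lambda=\{x\}$. This is exactly the paper's argument (there the normalization is phrased as: up to replacing $x_n$ by iterates, the witness lies in an annulus $W^{ss}_{\e_0}(x_n)\setminus W^{ss}_{\e_1}(x_n)$, and both $K_n$ and $\Lambda_n$ being invariant, the iterated points still belong to them).

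There is, however, one step that fails as written: your normalization insists on flowing \emph{forward} ($s_n\ge 0$) "until the pair reaches a definite distance $r$ inside the local strong stable disk". Since $E^{ss}$ is uniformly contracted by the forward tangent flow, forward flowing can only \emph{decrease} the strong stable separation; so in the delicate case you yourself single out, namely $z_n\in W^{ss}_{Y_n}(x_n)\setminus\{x_n\}$ with $\ud(x_n,z_n)$ arbitrarily small (and nothing in the contradiction hypothesis prevents this), no forward time $s_n\ge 0$ brings the pair to separation $r$, and your limiting witness may still collapse onto $x$. The fix is immediate and is what "replacing $x_n$ by iterates" means in the paper: allow $s_n\in\RR$. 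Under backward flow the bundle $E^{ss}_{Y_n}$ is uniformly expanded (with rates uniform in $n$), so a nearby witness is pushed out until its intrinsic strong stable distance enters a fixed annulus $(\e_1,\e_0)$, while a far witness is brought into the local disk by forward flow; in both cases the iterated points remain in $K_n$ and $\Lambda_n$ by invariance, and your limit argument then goes through verbatim. With that one-word correction (backward as well as forward iterates), your proof matches the paper's.
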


\proof
Since the partial hyperbolicity is robust, the first item holds for a $C^1$-neighborhood $\cU$ of $X$ and a neighborhood $W$ of $K$.
Let us assume by contradiction that the second item does not hold and that there exist sequence of vector fields $Y_n\in\cU$,
neighborhoods $V_n\subset W$ of $K$, neighborhoods $U_n$ of $\Lambda$ and points $x_n$ such that:
\begin{itemize}
	\item $Y_n$ tends to $X$ in $C^1$-topology, $\cap_{n\in\NN} V_n=K$ and $\cap_{n\in\NN} U_n=\Lambda;$
	\item maximal invariant sets $K_n,\Lambda_n$ of $Y_n$ in $V_n,U_n$ satisfy $x_n\in K_n$, $(W^{ss}(x_n)\setminus\{x_n\})\cap \Lambda_n\neq\emptyset.$
\end{itemize}
Each point $x$ in $K_n$ has local strong stable manifolds which vary continuously in the $C^1$-topology with $x$ and $Y_n$.
Up to replacing $x_n$ by iterates, one can consider constants $\e_0>\e_1>0$ such that
$W^{ss}_{\e_0}(x_n)\setminus W^{ss}_{\e_1}(x_n)\cap\Lambda_n\neq\emptyset.$
Up to taking a subsequence, $(x_n)$ converges to a point $x\in K$ and  
$W^{ss}_{\e_0}(x)\setminus W^{ss}_{\e_1}(x)\cap\Lambda\neq\emptyset$ which gives the contradiction.
\endproof

\proof[Proof of Theorem~\ref{t.Aprime}]
We first address the singularities. Let us denote by $S_-$ the set of singularities $\sigma\in \Lambda$
admitting a splitting $T_\sigma M=E^{ss}\oplus F$ with $\dim(E^{ss})=i$ and $W^{ss}(\sigma)\cap \Lambda=\{\sigma \}$.
Analogously, one defines $S_+$ the set of singularities $\sigma\in \Lambda$
admitting a splitting $T_\sigma M=E\oplus E^{uu}$ with $\dim(E^{uu})=\dim(M)-i-1$ and $W^{uu}(\sigma)\cap \Lambda=\{\sigma \}$.

Since local strong stable and unstable manifolds vary continuously with respect to the points,
each $\sigma\in S_-$ (resp. $\sigma\in S_+$) admits a neighborhood $U_\sigma$ with $U_\sigma\cap\sing(X)\cap\Lambda\subset S_-$ (resp. $\subset S_+$).
As  $\sing(X)\cap\Lambda$ is  compact, there exist two open sets $V^-,V^+$ such that:
\begin{itemize}
	\item  $\sing(X)\cap\Lambda\subset V^-\cup V^+$;
	\item $\overline{V^-}\cap\sing(X)\cap\Lambda\subset S_-$ and $\overline{V^+}\cap\sing(X)\cap\Lambda\subset S_+$. 
\end{itemize} 
Applying Lemma~\ref{l.robust-escaping} to $\overline{V^-}\cap\sing(X)\cap\Lambda$ (resp. $ \overline{V^+}\cap\sing(X)\cap\Lambda$), one gets a neighborhood $\cU_0$ of $X$, a neighborhood $U_0$ of $\Lambda$ and an open subset $V$ of $V^-\cup V^+$ such that for each $Y\in\cU_0$,
\begin{itemize}
	\item $\sing(Y)\cap U_0\subset V$;
	\item any singularity $\sigma$ in $\sing(Y)\cap V^-$ (resp. $\sing(Y)\cap V^+$) admits a splitting $E^{ss}\oplus F$ (resp. $E\oplus E^{uu}$)
with $\dim(E^{ss})=i$ (resp. $\dim(E^{uu})=\dim(M)-i-1$);
	\item the maximal invariant set $\Lambda^0_Y$ of $Y$ in $U_0$ satisfies
$W^{ss}(\sigma)\cap\Lambda^0_Y\subset \{\sigma\}$ for each $\sigma$ in $\sing(Y)\cap V^-$ and $W^{uu}(\sigma)\cap\Lambda^0_Y\subset \{\sigma\}$ for each $\sigma$ in $\sing(Y)\cap V^+$.
\end{itemize}
This gives the second item of the definition~\ref{d.singular-domination} for the maximal invariant set $\Lambda^0_Y$.
\medskip

We then compactifies the sets $\Lambda^0_Y\setminus \sing(Y)$ for $Y\in \cU_0$.
For each $\sigma\in\sing(Y)\cap V^-$, we denote by $K^-(\sigma)$ the projective space of the linear space $F(\sigma)$.
Analogously for $\sigma\in\sing(Y)\cap V^+$ we denote $K^+(\sigma)$ the projective space of the linear space $E(\sigma)$.
We then define:
$$K_Y=\bigcup_{x\in\Lambda^0_Y\setminus\sing(Y)}\RR Y(x) \cup \bigcup_{\sigma\in\sing(Y)\cap\Lambda^0_Y\cap V^-} K^-(\sigma)\cup \bigcup_{\sigma\in\sing(X)\cap\Lambda^0_Y\cap V^+} K^+(\sigma).$$

\begin{Claim}
$K_Y$ is a compact $(\wh\varphi^Y_t)_{t\in\RR}$-invariant set which varies upper semi-continuously with respect to the vector fields $Y\in \cU_0$.
\end{Claim}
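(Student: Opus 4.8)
The plan is to verify the three asserted properties of $K_Y$ — compactness, $(\wh\varphi^Y_t)$-invariance, and upper semi-continuity in $Y$ — one at a time, exploiting the fact that $\Lambda^0_Y$ is by construction a maximal invariant set in a fixed isolating neighborhood $U_0$, so it already varies upper semi-continuously with $Y$, and that the finitely many partially-hyperbolic splittings $E^{ss}\oplus F$ (resp. $E\oplus E^{uu}$) at the singularities vary continuously with $Y$ by robustness of partial hyperbolicity (this was already used above via Lemma~\ref{l.robust-escaping}).

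First I would establish invariance. On the regular part, $\bigcup_{x\in\Lambda^0_Y\setminus\sing(Y)}\RR Y(x)$ is exactly the image of $\Lambda^0_Y\setminus\sing(Y)$ under the canonical embedding $x\mapsto\RR Y(x)$ into $\cG^1$, and since this embedding conjugates $(\varphi^Y_t)$ with $(\wh\varphi^Y_t)$ and $\Lambda^0_Y\setminus\sing(Y)$ is flow-invariant, this piece is $(\wh\varphi^Y_t)$-invariant. For the pieces $K^-(\sigma)$, note $\sigma$ is a fixed point, so $\wh\varphi^Y_t$ acts on $\cG^1(\sigma)$ as the projectivization of $D\varphi^Y_t$; since $F(\sigma)$ is $D\varphi^Y_t$-invariant, its projective space $K^-(\sigma)$ is $\wh\varphi^Y_t$-invariant, and similarly for $K^+(\sigma)$. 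Hence $K_Y$ is invariant.

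Next, compactness. The sets $K^{\pm}(\sigma)$ are projective spaces of linear subspaces, hence compact, and there are finitely many singularities in $\Lambda^0_Y$. The only subtlety is the closure of the regular part: I would show $K_Y$ is closed by arguing that any $L\in\overline{\bigcup_{x\in\Lambda^0_Y\setminus\sing(Y)}\RR Y(x)}$ which is not itself of the form $\RR Y(x)$ must be a limit $L=\lim \RR Y(x_n)$ with $x_n\to\sigma\in\sing(Y)\cap\Lambda^0_Y$ (using compactness of $\Lambda^0_Y$ and continuity of $Y$); here I would invoke Proposition~\ref{p.strong-stable-outside} together with the property $W^{ss}(\sigma)\cap\Lambda^0_Y=\{\sigma\}$ (resp. $W^{uu}$) established in the first part of the proof of Theorem~\ref{t.Aprime}, which forces such a limiting line to lie in the projective space of $F(\sigma)$ when $\sigma\in V^-$ (resp. of $E(\sigma)$ when $\sigma\in V^+$): indeed, a line $\RR Y(x_n)$ with $x_n\to\sigma$ inside $\Lambda^0_Y$ cannot accumulate on a line meeting $E^{ss}(\sigma)$ nontrivially without $x_n$ lying on (something accumulating) $W^{ss}(\sigma)$. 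Therefore the limit belongs to $K^-(\sigma)\subset K_Y$ (resp. $K^+(\sigma)$), so $K_Y$ is closed, hence compact.

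Finally, upper semi-continuity: given $Y_n\to Y$ in $\cU_0$ and $L_n\in K_{Y_n}$ with $L_n\to L$, I would show $L\in K_Y$. Split into cases according to whether $L_n$ lies in the regular piece or in some $K^{\pm}(\sigma_n)$. Using upper semi-continuity of $\Lambda^0_{Y_n}$, continuity of the vector fields, continuity of the splittings $E^{ss}_{Y_n}(\sigma_n)\oplus F_{Y_n}(\sigma_n)$ with respect to $Y_n$ and the (necessarily convergent, after passing to a subsequence) singularities $\sigma_n$, together again with Proposition~\ref{p.strong-stable-outside} and the escaping property of strong stable/unstable manifolds, each case forces $L$ to land in $K_Y$. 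I expect the main obstacle to be the closure/limit analysis of the regular part near the singularities — i.e., ruling out that a sequence $\RR Y(x_n)$ with $x_n\to\sigma$ accumulates on a line through $E^{ss}(\sigma)$ (equivalently, that $x_n$ approaches $\sigma$ tangentially to the strong stable direction while staying in $\Lambda^0_Y$) — which is precisely where the hypothesis $W^{ss}(\sigma)\cap\Lambda=\{\sigma\}$ and Proposition~\ref{p.strong-stable-outside} must be combined carefully; the compactness and invariance statements, and the singular pieces of the upper semi-continuity argument, are comparatively routine.
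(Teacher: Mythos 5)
Your reduction of the Claim to a single limit statement (invariance is immediate; compactness and semi-continuity both follow once one shows that any limit of lines $\RR Y_n(x_n)$ with $x_n\in\Lambda^0_{Y_n}\setminus\sing(Y_n)$ converging to a singularity $\sigma\in V^-$ (resp.\ $V^+$) lies in the projective space of $F(\sigma)$ (resp.\ $E(\sigma)$)) is exactly the paper's reduction. But the proof of that limit statement — which is the entire nontrivial content of the Claim, and which you yourself flag as ``the main obstacle'' — is not supplied, and the tool you propose for it, Proposition~\ref{p.strong-stable-outside}, does not apply here. That proposition presupposes a \emph{hyperbolic} singularity and a dominated splitting of the \emph{extended} linear Poincar\'e flow over a compact invariant subset of $\cG^1(\sigma)$ intersecting $\PP E^u(\sigma)$; in the proof of Theorem~\ref{t.Aprime} the singularities of Definition~\ref{d.singular-domination} need not be hyperbolic, and for the perturbed fields $Y_n$ no domination of $(\wh\Psi^{Y_n}_t)$ over any set in $\cG^1$ is available at this stage — producing it is precisely what the compactness and semi-continuity of $K_Y$ are used for afterwards, so invoking it would be circular. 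Moreover, even where it applies, its conclusion only excludes lines of $E^{ss}(\sigma)\oplus E^u(\sigma)$ outside $E^{ss}(\sigma)\cup E^u(\sigma)$; it does not exclude lines contained in $E^{ss}(\sigma)$ or lines mixing $E^{ss}(\sigma)$ with a general complement $F(\sigma)$, so it cannot force the limit line into $\PP F(\sigma)$.

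What the paper does instead is a direct orbit-level argument using only the tangent-flow domination $T_\sigma M=E^{ss}\oplus F$ and the escaping property. Arguing by contradiction, if $\RR Y_n(x_n)\to L\not\subset F(\sigma)$, the domination applied backwards along the orbit of $x_n$ produces backward iterates $y_n$ with $\RR Y_n(y_n)$ converging into $E^{ss}(\sigma)$; then $Y_n(y_n)$ lies in a strong-stable cone field $\cC^{ss}$ on a fixed neighborhood $V$ of $\sigma$, which is $(D\varphi_t)_{t<0}$-invariant with uniform backward expansion. Applied to the vector field itself, this expansion forces the backward orbit of $y_n$ to leave $V$; taking the first exit times and a limit of the exit points, the backward invariance of the cone yields a point $z\in W^{ss}(\sigma)\setminus\{\sigma\}$ which also lies in $\Lambda$, contradicting $W^{ss}(\sigma)\cap\Lambda=\{\sigma\}$. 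So the hypothesis $W^{ss}(\sigma)\cap\Lambda=\{\sigma\}$ enters through this cone/escape argument, not through Proposition~\ref{p.strong-stable-outside}; without some version of this argument your proposal has a genuine gap at its central step.
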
 
\proof
By definition, $K_Y$ is $(\wh\varphi^Y_t)_{t\in\RR}$-invariant. In order to prove the compactness and the semi-continuity,
it suffices to prove that, for any sequence $Y_n\to Y$ and for any  sequence of points $x_n\subset\Lambda_{Y_n}^0\setminus\sing(Y_n)$ converging to $\sigma\in \sing(Y)\cap V^-$ (resp. $\sigma\in \sing(Y)\cap V^+$), each limit of $\RR Y_n(x_n)$ belongs to $K^-(\sigma)$ (resp. $K^+(\sigma)$).  We only consider the case where  $x_n$ tends to $\sigma\in \sing(Y)\cap V^-$ since the other case is analogous.

Assume by contradiction, that $\RR Y_n(x_n)$ converges to a line $L$ which is not contained in $F(\sigma)$.
Using the domination $T_\sigma M=E^{ss}\oplus F$ over $\sigma$ (and up to considering a subsequence),
there exists $y_n$ in the backward orbit of $x_n$ such that $\RR Y_n(y_n)$ converges to a line $L'\subset E^{ss}(\sigma)$.
On a small open neighborhood $V$ of $\sigma$, there exists a continuous $(D\varphi_t)_{t<0}$-invariant cone field $\cC^{ss}$
such that any vector in $\cC^{ss}$ is uniformly expanded by $(D\varphi_t)_{t<0}$. For $n$ large,  $Y_n(y_n)$ is tangent to $\cC^{ss}$:
this implies that the backward orbit of $y_n$ escapes from $V$.
Let $t_n>0$ be the smallest number such that $\varphi_{-t_n}(y_n)$ is not in $V$ and let $z$ be an accumulation point of $\varphi_{-t_n}(y_n)$.
The backward invariance of the cone field shows that $z$ belongs to $W^{ss}(\sigma)\setminus \{\sigma\}$.
It also belongs to $\Lambda$, and this contradicts to the fact that $W^{ss}(\sigma)\cap\Lambda=\{\sigma\}.$
\endproof

Since the existence of a dominated splitting for continuous linear cocycles over compact spaces is robust,
there exists a $(\eta,T)$-dominated splitting of index $i$ over $K_Y$ for the extended linear Poincar\'e flow for any $Y$ in a $C^1$-neighborhood $\cU$ of $X$.
In particular the linear Poincar\'e flow over $\Lambda_Y$ admits a $(\eta,T)$-dominated splitting for any $Y\in \cU$.
\endproof

\subsection{Robust dominated splitting implies singular dominated splitting}
We now prove a converse statement to Theorem~\ref{p.robust-singular-domination}. Note that the second assumption is very mild
(it is satisfied once $X$ is Kupka-Smale and $\Lambda$ is chain transitive).

\begin{Proposition}~\label{p.robust-domination-imply-singular domination}
Let $X\in \cX^1(M)$ and $\Lambda$ be a  compact invariant set such that:
\begin{itemize}
\item $\Lambda$ admits a robust dominated splitting of index $i$:
there exist  $\eta,T>0$ and neighborhoods $\cU$ of $X$ and $U$ of $\Lambda$,
such that, for any $Y\in \cU$, the maximal invariant set in $U$ admits a $(\eta,T)$-dominated splitting of index $i$ for the linear Poincar\'e flow.
\item Each $\sigma\in \Lambda\cap \sing(X)$ is hyperbolic; moreover $W^{s}(\sigma)\cap\Lambda\setminus\{\sigma\}\neq\emptyset$ and  $W^{u}(\sigma)\cap\Lambda\setminus\{\sigma\}\neq\emptyset$.
\end{itemize}
Then the set $\Lambda$ admits a singular dominated splitting of index $i$.
\end{Proposition}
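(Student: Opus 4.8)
The plan is to exploit the robustness hypothesis together with Theorem~\ref{p.robust-singular-domination} (or rather its contrapositive idea) and, crucially, Proposition~\ref{p.strong-stable-outside} to produce the required dominated splittings at the singularities. First I would fix $\sigma\in\Lambda\cap\sing(X)$ and look at the extended linear Poincar\'e flow $(\wh\Psi_t)$ over the compactification of $\Lambda\setminus\sing(X)$. The robust dominated splitting of index $i$ for $(\Psi_t)$ over $\Lambda\setminus\sing(X)$ extends to an $(\eta,T)$-dominated splitting $\wh\cN=\cN_1\oplus\cN_2$ over the closure $\wh\Lambda$ inside $\cG^1$, by the same compactification argument as in the proof of Theorem~\ref{t.Aprime}. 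Restricting this to the compact $(\wh\varphi_t)$-invariant set $\wh\Lambda_\sigma:=\wh\Lambda\cap\cG^1(\sigma)$, which is non-empty precisely because both $W^s(\sigma)\cap\Lambda\setminus\{\sigma\}$ and $W^u(\sigma)\cap\Lambda\setminus\{\sigma\}$ are non-empty (so there are regular orbits accumulating on $\sigma$ from both sides, and their flow directions accumulate on lines in $E^s(\sigma)$ and in $E^u(\sigma)$ respectively), gives a dominated splitting of index $i$ for $(\wh\Psi_t)$ over $\wh\Lambda_\sigma$.

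Next I would compare $i=\dim(\cN_1)$ with $\ind(\sigma)$ and split into the two symmetric cases. If $i<\ind(\sigma)$, then since $\wh\Lambda_\sigma$ meets the projective space of $E^u(\sigma)$ (by the $W^u(\sigma)\cap\Lambda\setminus\{\sigma\}\neq\emptyset$ hypothesis and the inclination lemma), Proposition~\ref{p.strong-stable-outside} applies directly: it yields a finer dominated splitting $E^s(\sigma)=E^{ss}\oplus E^{cs}$ for $(D\varphi_t)$ with $\dim(E^{ss})=i=\dim(\cN_1)$, and it yields exactly the escaping property, namely that every line in $E^{ss}\oplus E^u(\sigma)$ not contained in $E^{ss}\cup E^u(\sigma)$ is disjoint from $\wh\Lambda_\sigma$. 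From the escaping property one deduces $W^{ss}(\sigma)\cap\Lambda=\{\sigma\}$: if some regular point $z\neq\sigma$ of $\Lambda$ lay in $W^{ss}(\sigma)$, then the flow direction $\RR X(\varphi_t(z))$ would, as $t\to+\infty$, accumulate inside the projective space of $E^{ss}(\sigma)$ while remaining in $\wh\Lambda_\sigma$, and one gets a contradiction with the way lines approach $E^{ss}$ — more carefully, one uses that the full stable manifold $W^s(\sigma)$ also meets $\Lambda$ in points whose flow directions accumulate on lines of $E^s(\sigma)$, so a line tangent to $E^{ss}\oplus E^u$ not in $E^{ss}\cup E^u$ would appear in $\wh\Lambda_\sigma$, contradicting the second bullet of Proposition~\ref{p.strong-stable-outside}. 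This gives the first alternative of item (ii) of Definition~\ref{d.singular-domination}. Symmetrically, if $i\geq\ind(\sigma)$, then $\dim(M)-1-i<\dim(M)-1-\ind(\sigma)$ is less than the index for the reversed flow, and applying Proposition~\ref{p.strong-stable-outside} to $-X$ yields a dominated splitting $E^u(\sigma)=E^{cu}\oplus E^{uu}$ with $\dim(E^{uu})=\dim(\cN_2)$ and $W^{uu}(\sigma)\cap\Lambda=\{\sigma\}$, i.e.\ the second alternative. The case $i=\ind(\sigma)$ must be handled by either alternative consistently; one checks that when $i=\ind(\sigma)$ the splitting is simply the hyperbolic one $E^s\oplus E^u$ with $\dim(E^s)=i=\dim(\cN_1)$, so both $W^{ss}(\sigma)=W^s(\sigma)$ and $W^{uu}(\sigma)=W^u(\sigma)$, and one needs the hypothesis that these intersect $\Lambda$ only in $\{\sigma\}$ — but this is \emph{not} assumed, so in fact $i=\ind(\sigma)$ cannot occur, which one proves: if $i=\ind(\sigma)$ the domination $\cN_1\oplus\cN_2$ over $\wh\Lambda_\sigma$ would force, at a line $L\subset E^s(\sigma)\oplus E^u(\sigma)$ coming from an orbit in $\Lambda$ connecting the two manifolds, an incompatible splitting of $\cN(L)=E^{ss}\oplus E^{uu}$-type — so the transitions between $W^s(\sigma)\cap\Lambda$ and $W^u(\sigma)\cap\Lambda$ obstruct $i=\ind(\sigma)$.

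Having established item (ii) at every singularity, item (i) is immediate: the dominated splitting $\cN_1\oplus\cN_2$ of the linear Poincar\'e flow over $\Lambda\setminus\sing(X)$ is part of the hypothesis (take $Y=X$ in the robustness assumption). So $\cN|_{\Lambda\setminus\sing(X)}=\cN_1\oplus\cN_2$ is a singular dominated splitting of index $i$, as required. The main obstacle, I expect, is the bookkeeping around the case $i=\ind(\sigma)$ and, more generally, making precise the claim that $\wh\Lambda_\sigma$ meets the relevant projective subspaces and that regular orbits of $\Lambda$ asymptotic to $\sigma$ contribute their flow directions to $\wh\Lambda_\sigma$ — this requires the inclination lemma and a careful use of the hypotheses $W^{s/u}(\sigma)\cap\Lambda\setminus\{\sigma\}\neq\emptyset$; the escaping conclusion of Proposition~\ref{p.strong-stable-outside} then does the real work of turning "the line is disjoint from $\wh\Lambda_\sigma$" into "$W^{ss}(\sigma)\cap\Lambda=\{\sigma\}$".
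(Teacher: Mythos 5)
Your reduction to Proposition~\ref{p.strong-stable-outside} gets the finer splitting $E^s(\sigma)=E^{ss}\oplus E^{cs}$ correctly (this is exactly how the paper starts), but the key step --- deducing $W^{ss}(\sigma)\cap\Lambda=\{\sigma\}$ from the escaping property of that proposition applied to $X$ itself --- has a genuine gap. A point $y\in W^{ss}(\sigma)\cap\Lambda\setminus\{\sigma\}$ only contributes to $\wh\Lambda$ limit lines of $\RR X(\varphi_t(y))$, and as $t\to+\infty$ these accumulate \emph{inside} the projective space of $E^{ss}(\sigma)$; likewise points of $W^u(\sigma)\cap\Lambda$ contribute lines inside $E^u(\sigma)$. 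Both families are contained in $E^{ss}\cup E^u$, which the second bullet of Proposition~\ref{p.strong-stable-outside} explicitly allows, so no ``diagonal'' line of $E^{ss}\oplus E^u$ is forced to appear in $\wh\Lambda$ and no contradiction arises. Note that your argument never uses the robustness hypothesis in an essential way (you only invoke $Y=X$), whereas the statement is false without it: the Remark following the proposition in the paper exhibits a $3$-dimensional vector field with a chain transitive set that is dominated for the linear Poincar\'e flow, has $W^{ss}(\sigma)$ meeting $\Lambda$ beyond $\sigma$, and hence is not singular dominated. So any purely ``static'' deduction of the escaping property must fail.

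The paper's proof turns robustness into the missing diagonal line by perturbation: assuming $y\in W^{ss}(\sigma)\cap\Lambda\setminus\{\sigma\}$, it uses the connecting lemma (Theorem~\ref{thm.connecting-lemma}) together with a local linearization near $\sigma$ to build vector fields $X_n\to X$ and points $x_n\to\sigma$ in the maximal invariant set of $X_n$ in $U$ whose directions $\RR X_n(x_n)$ converge to a line in $E^{ss}(\sigma)\oplus E^u(\sigma)$ not contained in $E^{ss}\cup E^u$ (with several cases according to how $\alpha(y)$ and $\omega$-limits of points of $W^u(\sigma)\cap\Lambda$ meet the invariant manifolds of $\sigma$). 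The robust $(\eta,T)$-domination then holds over the corresponding compact sets in $\cG^1$ for each $X_n$, passes to the limit set, and the presence of the forbidden line there contradicts Proposition~\ref{p.strong-stable-outside}. You would need to supply this perturbative mechanism to close your argument. A secondary point: your claim that $i=\ind(\sigma)$ cannot occur is incorrect (it does occur, e.g.\ for a singular hyperbolic repeller whose Lorenz-like singularity has stable index $i$); that case is simply handled by the second alternative of Definition~\ref{d.singular-domination}, applying the argument to $-X$, since then $\dim(\cN_2)=\dim(M)-1-i$ is strictly smaller than the stable index of $\sigma$ for $-X$.
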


\proof
Let $\widetilde{\Lambda}$ be the closure of $\{\RR X(x):x\in \Lambda\setminus\sing(X) \}$ in $\cG^1$.
The extended linear Poincar\'e flow is a continuous cocycle over the compact space $\cG^1$.
As a consequence the dominated splitting over $\Lambda\setminus \Sing(X)$ for the linear Poincar\'e flow extends over $\widetilde\Lambda$ for the extended linear Poincar\'e flow. 
Consider a singularity $\sigma\in \Lambda$. Without loss of generality, one can assume $\dim(E^{s}(\sigma))\geq i$.
By assumption, there exists a line $L\in\widetilde\Lambda$ which is contained in $E^u(\sigma)$. Then, from the first item of Proposition~\ref{p.strong-stable-outside},
there exists a dominated splitting $E^s(\sigma)=E^{ss}(\sigma)\oplus E^{cs}(\sigma)$ with $\dim(E^{ss}(\sigma))=i.$

It remains to show that $W^{ss}(\sigma)\cap\Lambda=\{\sigma\}.$
This is proved by contradiction: we assume that there exists a point $y\in W^{ss}(\sigma)\cap\Lambda\setminus \{\sigma\}$.
\begin{Claim}
There exist a sequence $X_n\to X$ in $\cX^1(M)$ and a sequence $x_n\to \sigma$ in $M$ such that:
\begin{itemize}
\item $\RR X_n(x_n)$ converges to some line in $E^{ss}(\sigma)\oplus E^u(\sigma)\setminus\{E^{ss}(\sigma)\cup E^u(\sigma) \}$;
\item
$x_n$ belongs to the maximal invariant set of ${X_n}$ in $U$.
\end{itemize}
\end{Claim}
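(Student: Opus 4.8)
The plan is to build the $X_n$ by a connecting‐lemma construction localized near $\sigma$. By the contradiction hypothesis there is a point $y\in W^{ss}(\sigma)\cap\Lambda\setminus\{\sigma\}$, and by the hypothesis of the proposition a point $z\in W^{u}(\sigma)\cap\Lambda\setminus\{\sigma\}$; the forward orbit of $y$ converges to $\sigma$ tangentially to $E^{ss}(\sigma)$, the backward orbit of $z$ converges to $\sigma$ tangentially to $E^{u}(\sigma)$, and both orbits, lying in the compact invariant set $\Lambda$, are contained in $U$. Fix $C^1$-neighbourhoods $\cU_n\downarrow\{X\}$ with the constants $T_n,\rho_n,d_{0,n}$ furnished by Theorem~\ref{thm.connecting-lemma}. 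For each $n$, take a point $x^*_n$ on the backward orbit of $z$ at distance $\delta_n$ from $\sigma$ (with $\delta_n\to0$ to be chosen small), which is regular and non-periodic since its orbit converges to $\sigma$, and apply Theorem~\ref{thm.connecting-lemma} at $x^*_n$ with tube radius $d_n$ of order $\delta_n/\rho_n$: the backward orbit of a point $q_n$ of $W^{u}(\sigma)\cap\Lambda$ slightly beyond $x^*_n$ passes through $x^*_n$, and, since the forward orbit of $y$ accumulates at $\sigma$, it also meets $B_{\rho_n d_n}(x^*_n)$; one obtains $X_n\in\cU_n$, equal to $X$ outside the tube $\mathcal{T}_n=\Delta_{T_n}(x^*_n,d_n)\subset B_{Cd_n}(\sigma)$ (the connecting perturbation being tangent to the flow, one may keep $\sigma$ fixed), for which $q_n$ lies on the forward $X_n$-orbit of a point $p_n$ of the orbit of $y$. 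Let $\gamma_n$ be this $X_n$-orbit. Outside $\mathcal{T}_n$ it coincides with orbit arcs contained in $\Lambda$ (the backward orbit of $y$, the $W^{u}$-arc from $\sigma$ to $z$, and the forward orbit of $z$), and $\mathcal{T}_n\subset U$; hence $\gamma_n\subset U$ for all time, so any $x_n\in\gamma_n\cap\mathcal{T}_n$ lies in the maximal invariant set of $X_n$ in $U$ --- this is the second bullet --- and $x_n\to\sigma$ since $\mathcal{T}_n$ shrinks to $\sigma$.

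For the first bullet, observe that along $\gamma_n$ the direction $\RR X_n(\cdot)$ is close to $\PP(E^{ss}(\sigma))$ just before $\gamma_n$ enters $\mathcal{T}_n$ (there it shadows $W^{ss}(\sigma)$) and close to $\PP(E^{u}(\sigma))$ just after it leaves $\mathcal{T}_n$ (there it shadows $W^{u}(\sigma)$). Since $X_n\to X$ in the $C^1$-topology and, near $\sigma$, one has $X_n(x)=D_\sigma X\cdot(x-\sigma)+o(\mathrm{dist}(x,\sigma))+(X_n-X)(x)$ with $D_\sigma X$ preserving $E^{ss}(\sigma)\oplus E^{cs}(\sigma)\oplus E^{u}(\sigma)$, the direction of $X_n$ at a point $x_n\in\gamma_n\cap\mathcal{T}_n$ equals, up to an error tending to $0$, the line $\RR\big(D_\sigma X\cdot(x_n-\sigma)\big)$. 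Writing $x_n-\sigma=\xi_n^{ss}+\xi_n^{cs}+\xi_n^{u}$, along $\gamma_n\cap\mathcal{T}_n$ the ratio $\|\xi^{ss}\|/\|\xi^{u}\|$ decreases continuously from a large to a small value, so by the intermediate value theorem there is a point with $\|\xi_n^{ss}\|=\|\xi_n^{u}\|$; I would take $x_n$ to be such a point. Extracting a subsequence, $\RR X_n(x_n)$ converges to $L=\RR(D_\sigma X\cdot u)$ for a unit vector $u$ with $\|u^{ss}\|=\|u^{u}\|>0$; provided $u^{cs}=0$, the line $L$ lies in $E^{ss}(\sigma)\oplus E^{u}(\sigma)$ with a non‑zero component along each factor, hence $L\notin E^{ss}(\sigma)\cup E^{u}(\sigma)$.

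The crux, and the only genuinely delicate point, is to guarantee $u^{cs}=0$, i.e. that $\|\xi_n^{cs}\|$ is negligible compared with $\|\xi_n^{ss}\|$ at the chosen point. After flattening the invariant manifolds so that $W^{ss}(\sigma)$ and $W^{u}(\sigma)$ lie in the slice $\{\xi^{cs}=0\}$, the component $\xi^{cs}$ vanishes on the pieces of $\gamma_n$ outside $\mathcal{T}_n$ (which sit on these manifolds); inside $\mathcal{T}_n$ it is forced only by the quadratic part of the vector field, of size $O(d_n^2)$ on $\mathcal{T}_n$, and by the perturbation $X_n-X$, while the linear part $D_\sigma X|_{E^{cs}}$ contracts it, so $\|\xi^{cs}_n\|=O(d_n^2+\|X_n-X\|_{C^1}\,d_n)$. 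The cleanest way to conclude is to precede the connecting‑lemma perturbation by a $C^1$-small auxiliary perturbation near $\sigma$ making the slice $\{\xi^{cs}=0\}$ locally invariant and performing the connection of the two orbit branches inside it, so that $\gamma_n$ genuinely lies in $\{\xi^{cs}=0\}$ near $\sigma$ and $L\in\PP\big(E^{ss}(\sigma)\oplus E^{u}(\sigma)\big)$; making this auxiliary perturbation compatible with the connecting lemma (or, alternatively, scheduling $\delta_n\to 0$ and $\cU_n\downarrow\{X\}$ fast enough relative to $T_n,\rho_n$ so that the above estimate beats the contraction of $\|\xi^{ss}_n\|$ along $\gamma_n\cap\mathcal{T}_n$) is where the real work lies; everything else is soft. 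Once the claim is established, for $n$ large $X_n$ belongs to the robust‑domination neighbourhood, so the maximal invariant set $\Lambda_n$ of $X_n$ in $U$ carries a dominated splitting of index $i$ for the linear Poincaré flow; passing in $\cG^1$ to a Hausdorff limit of the compactified sets $\widehat\Lambda_n$ --- in which the orbit of $z$ provides a line inside $\PP(E^{u}(\sigma))$ --- Proposition~\ref{p.strong-stable-outside} contradicts $L\notin E^{ss}(\sigma)\cup E^{u}(\sigma)$, which proves $W^{ss}(\sigma)\cap\Lambda=\{\sigma\}$.
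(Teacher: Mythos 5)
Your overall strategy (perturb to create an $X_n$-orbit inside $U$ passing near $\sigma$ whose direction at some point limits onto a line of $E^{ss}(\sigma)\oplus E^u(\sigma)$ off the two factors) is the right one, but the execution has a genuine gap exactly at the point you flag yourself. First, the way you invoke Theorem~\ref{thm.connecting-lemma} is problematic: you center the perturbation at a point $x^*_n\in W^u(\sigma)$ at distance $\delta_n$ from $\sigma$ and take $d_n\sim\delta_n/\rho_n$, so the ball $B_{d_n}(x^*_n)$, hence the tube $\Delta_{T_n}(x^*_n,d_n)$, contains $\sigma$; inside the tube the connecting lemma gives no information about $X_n$ beyond $C^1$-smallness, so your parenthetical ``the connecting perturbation being tangent to the flow, one may keep $\sigma$ fixed'' and the subsequent claims that $\gamma_n$ shadows $W^{ss}(\sigma)$ and $W^u(\sigma)$ of the \emph{unperturbed} field are unjustified. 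If instead you shrink $d_n$ so the tube avoids $\sigma$, the forward orbit of $y$ (which approaches $\sigma$ tangentially to $E^{ss}$) has no reason to enter the small ball $B_{\rho_nd_n}(x^*_n)$ around a point sitting in the $E^u$-direction: the only common accumulation point of the two branches is the singularity itself, which is precisely what the connecting lemma excludes as a center. Second, and more fundamentally, the control of $\xi^{cs}_n$ at the turning point is missing: since $X$ is only $C^1$ there is no ``quadratic part of size $O(d_n^2)$'', and during the long passage near $\sigma$ the $E^{ss}$-component decays \emph{faster} than the $E^{cs}$-component (that is exactly what the domination $E^{ss}\oplus E^{cs}$ means), so a $C^1$-small, otherwise uncontrolled perturbation inside the tube can easily make $\xi^{cs}_n$ dominate $\xi^{ss}_n$ where $\|\xi^{ss}_n\|=\|\xi^u_n\|$; the two fixes you sketch (a connecting lemma respecting an invariant slice, or a scheduling of $\delta_n$ against $T_n,\rho_n$) are not available from the cited statements and are not carried out.

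For comparison, the paper avoids this difficulty by never perturbing blindly near $\sigma$: it distinguishes cases according to how the orbits of $y\in W^{ss}(\sigma)\cap\Lambda$ and $z\in W^u(\sigma)\cap\Lambda$ recur (e.g.\ $\alpha(y)$ meeting $W^u(\sigma)\setminus\{\sigma\}$, or not), applies the connecting lemma \emph{away} from $\sigma$ to join the two branches while keeping $X$ unchanged near $\sigma$ and along the relevant half-orbits, then makes the field \emph{linear} in a chart at $\sigma$ by a small $C^1$-perturbation, so that $E^{ss}(\sigma)\oplus E^u(\sigma)$ is exactly invariant, and finally performs explicit small perturbations near $y$ and $z$ (where transition times are bounded, so orbits can be displaced freely) to place the near-$\sigma$ piece of the resulting orbit \emph{exactly} inside $E^{ss}(\sigma)\oplus E^u(\sigma)$; the direction of the linear field at such a point then lies in $E^{ss}(\sigma)\oplus E^u(\sigma)\setminus(E^{ss}(\sigma)\cup E^u(\sigma))$ with no error term to estimate. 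Some version of this exact-invariance mechanism (or a genuinely new quantitative argument) is needed to close your proof.
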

\proof  We consider three cases.
\smallskip

\noindent
{\it Case 1. The $\alpha$-limit set of $y$ intersects $W^{u}(\sigma)\setminus \{\sigma\}$ at a point $z$.}
The connecting lemma (Theorem~\ref{thm.connecting-lemma}) gives $C^1$-perturbations $X'$ of $X$
which coincide with $X$ on $\{\varphi_t(y), t>0\}\cup \{\varphi_{-t}(z), t>0\}\cup\{\sigma\}$
and such that the orbits of $y$ and $z$ for $X'$ coincide and are contained in $U$.

One can consider a small chart $\psi_\sigma\colon T_\sigma M\to V$ on a neighborhood of $\sigma$
such that $\psi(0)=\sigma$ and such that $\psi^{-1}(W_{loc}^{ss}(\sigma))$ and $\psi^{-1}(W^u_{loc}(\sigma))$
coincide with the linear spaces $E^{ss}$ and $E^u$.
By an arbitrarily small $C^1$-perturbation (in a neighborhood of $\sigma$), one can furthermore assume that $X'$
is linear on a neighborhood of $0$.
Another small $C^1$-perturbation near $y$ and $z$  gives a vector field $X''$ such that 
	\begin{itemize}
		\item $X''=X'$ on a small neighborhood of $\sigma$; 
		\item  $y,z$ are on a same periodic orbit in $U$ under $X''$
		which contains a piece of orbit of the linear vector field which is included in $E^{ss}(\sigma)\oplus E^u(\sigma)$ with points close to $\sigma$.
	\end{itemize}
One deduces that there exists a point $x$ on the periodic orbit of $y$ and $z$ under $X''$ which is close to $\sigma$
such that $\frac{X''(x)}{\|X''(x)\|}\in E^{ss}(\sigma)\oplus E^u(\sigma)\setminus\{E^{ss}(\sigma), E^u(\sigma) \}.$
\medskip

\noindent
{\it Case 2. There exists a point $z\in W^u(\sigma)\cap \Lambda\setminus \{\sigma\}$ whose $\omega$-limit set intersects
	$W^{ss}(\sigma)\setminus \{\sigma\}$.} This case is analogous to the case 1.
\medskip

\noindent
{\it Case 3. $\alpha(y)\cap W^u(\sigma)=\emptyset$ and there exists $z\in W^u(\sigma)\cap \Lambda\setminus \{\sigma\}$
		such that $\omega(z)\cap W^{s}(\sigma)=\emptyset$.}
		As in the first case, one considers a chart $\psi_\sigma$ and a $C^1$-close vector field $X'$ which is linear near~$0$.
		Another small $C^1$-perturbation near $y$ and $z$  gives a vector field $X''$
		such that 
		\begin{itemize}
			\item  $X''=X'$ on a neighborhood of $\sigma$, on the positive orbit of $z$ and negative orbit of $y$;
			\item  $z$ belongs to the positive orbit of $y$ and the orbit segment from $y$ to $z$; 
			\item  there is a point $x$ in the orbit segment from $y$ to $z$ contains a piece of orbit of the linear vector field which is included in $E^{ss}(\sigma)\oplus E^u(\sigma)$ 			with points close to $\sigma$.
		\end{itemize}
One concludes as in the first case.

\medskip

\noindent
{\it Case 4. $\alpha(y)\cap W^u(\sigma)=\emptyset$ and there is $z\in W^u(\sigma)\cap \Lambda\setminus \{\sigma\}$
		such that $\omega(z)\cap W^{s}(\sigma)\setminus W^{ss}(\sigma)\neq\emptyset$.}  Then by connecting lemma, one gets a $C^1$-perturbation $\widetilde X$ of $X$ such that 
		\begin{itemize}
			\item $\widetilde X$ coincides with  $X$ on $\{\varphi_t(y)\}_{t\in\RR}$ and $y$ belongs to $W^{ss}(\sigma)$;
			\item  There exists a point $z'\in W^u(\sigma)\cap W^s(\sigma)$.
		\end{itemize}
One then concludes as in Case 3. 	
\endproof

Let $\overline{\Lambda}_n$ be the closure of $\{\RR X_n(\varphi^{X_n}_t(x_n)),\; t\in\RR \}$ in $\cG^1$. By assumption, for $n$ large, the set $\overline{\Lambda}_n$ admits a $(\eta, T)$-domination of index $i$ for the extended  linear Poincar\'e flow $(\Psi_t^{X_n})$.  By Appendix B.1 in \cite{BDV},  this domination of index $i$ can be passed to the set $\wh\Lambda$ for the extended linear Poincar\'e flow $(\Psi_t^X)$, where $\wh\Lambda$ is the limit supremum of $\overline{\Lambda}_n$.  By the Claim above,  there exists $L\subset E^{ss}(\sigma)\oplus E^u(\sigma)$, not contained in $E^{ss}(\sigma)\cup E^u(\sigma)$  which belongs to $\wh\Lambda$
and this contradicts   Proposition~\ref{p.strong-stable-outside}.
\endproof
\begin{Remark}
There exists a vector field $X$ admitting a chain transitive invariant compact set which is dominated for linear Poincar\'e flow but which is not singular dominated.

\rm By the previous proposition the domination is not robust. This example is built by considering a vector field $X$ on a 3-manifold with a hyperbolic singularity $\sigma$ of index $2$ such that:
\begin{itemize}
	\item $\sigma$ admits a dominated splitting $T_\sigma M=E^{ss}\oplus E^{cs}\oplus E^u$: there exist $\lambda_{ss}<\lambda_s<0<\lambda_u$ and an isometric chart $\varphi\colon (-1,1)^3\to U$ on a neighborhood of $\sigma$ where $X$ has the form
	$$X(x_{ss},x_{s},x_{u})=(\lambda_{ss} x_{ss},\lambda_s x_s,\lambda_{u}x_u).$$
	\item  $W^{ss}(\sigma)$ and $W^u(\sigma)$ have a non-empty intersection along a regular orbit $\{\varphi_t(y)\}_{t\in\RR}$.
	\item There exist a point $z$ (respectively $w$) in the orbit of $y$ whose backward (respectively forward) orbit stays in $U$
	and local sections $S_z$ (respectively $S_w$) at $z$ (respectively $w$) by discs orthogonal to $X$ such that the holonomy map of the flow
	has the form
	$$(x_{ss},x_{s})\mapsto(x'_s,x'_u)=(x_{s},x_{ss}).$$
\end{itemize}
\end{Remark}

\section{Multi-singular hyperbolicity}~\label{s.multi-singular-hyperbolic}
In this section we prove that the multi-singular hyperbolicity is robust (Theorem~\ref{thm.robustness-of-multi-singular-hyperbolicity}),
we discuss the Lorenz-like property of the singularities and we compare with the uniform hyperbolicity and the singular hyperbolicity
(Theorem~\ref{t.comparison}).
 
 \subsection{Preparation}
We first state a basic result which will be used   in this paper.

\begin{Lemma}~\label{l.sub-additive-function} Let us consider a continuous flow $(\varphi_t)_{t\in\RR}$ on a compact metric space $K$,
	a one-parameter family $\{a_t\}_{t\in\RR}$ of continuous functions $K\to \RR$ and numbers $\{c_t\}_{t\geq 0}$ satisfying
	$$a_{t+s}(x)\leq a_s(x)+a_t(\varphi_s(x))\quad  \textrm{ for any $x\in K$ and any $t,s\in\RR$},$$
	$$ \sup_{s\in[-t,t]} a_s(x)<c_t\quad  \text{ for any $x\in K$ and any $t\geq 0$}.$$  
	Then for any $T>0$ and any orbit segment $\{\varphi_s(x) \}_{s\in[0,t]}$ with $t\geq 3T$, one has  
	$$a_t(x)\leq 3 c_T+\frac{1}{T}\int_{0}^ta_T(\varphi_s(x))\ud s .$$
\end{Lemma}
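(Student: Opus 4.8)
The plan is to exploit the subadditivity relation $a_{t+s}(x)\le a_s(x)+a_t(\varphi_s(x))$ iteratively along the orbit segment, breaking $[0,t]$ into blocks of length $T$, and then to convert the resulting discrete sum $\sum_{k} a_T(\varphi_{kT}(x))$ into the average integral $\frac1T\int_0^t a_T(\varphi_s(x))\,\ud s$ by a Fubini-type argument, paying a bounded error controlled by $c_T$ at the two ends. Concretely: write $t=nT+r$ with $n=\lfloor t/T\rfloor\ge 3$ and $r\in[0,T)$. Applying subadditivity repeatedly gives
$$a_t(x)\le a_r(x)+\sum_{k=0}^{n-1} a_T(\varphi_{r+kT}(x)),$$
and since $r\in[0,T)$ we have $a_r(x)<c_T$ by the second hypothesis (taking the shift base point to be $x$; note $0\le r\le T$). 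So it remains to bound $\sum_{k=0}^{n-1} a_T(\varphi_{r+kT}(x))$ by $2c_T+\frac1T\int_0^t a_T(\varphi_s(x))\,\ud s$.

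The second step is the averaging. For each $k$ and each $s\in[kT,(k+1)T]$, subadditivity in the form $a_T(\varphi_{r+kT}(x))\le a_{(k+1)T+r-s}(\varphi_{s}(x))+\ldots$ is awkward; instead I would compare $a_T(\varphi_{r+kT}(x))$ with $a_T(\varphi_s(x))$ for $s$ near $r+kT$ using the two-sided estimate. The cleanest route: for $s\in[kT,(k+1)T]$, apply subadditivity twice to split $a_{2T}(\varphi_{s-T+\cdots})$... Rather than fuss, here is the standard trick. For any $s$, and any $u\in[0,T]$, subadditivity gives $a_T(\varphi_{r+kT}(x)) \le a_{u}(\varphi_{r+kT}(x)) \cdot 0 + \dots$; the point is that $a_{s'}(\cdot)$ for $|s'|\le T$ is bounded below-in-absolute-value... no — we only have an upper bound $c_T$, not a lower bound. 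So the right move is: for $s \in [r+kT, r+(k+1)T]$ (an interval of length $T$ inside $[0,t]$ once $k$ ranges appropriately), write $a_{T}(\varphi_{r+kT}(x)) \le a_{s-(r+kT)}(\varphi_{r+kT}(x)) + a_{r+(k+1)T-s}(\varphi_s(x))$, and this does not immediately help either. The clean identity that does work: for $s\in[r+kT,\,r+(k+1)T]$,
$$a_T(\varphi_{r+(k-1)T}(x)) \le a_T(\varphi_{r+(k-1)T}(x)),$$
and $a_T(\varphi_s(x)) \ge a_{2T}(\varphi_{r+(k-1)T}(x)) - a_{s-(r+(k-1)T)}(\varphi_{r+(k-1)T}(x)) \ge a_{2T}(\varphi_{r+(k-1)T}(x)) - c_T$, while $a_{2T}(\varphi_{r+(k-1)T}(x)) \ge a_T(\varphi_{r+kT}(x)) + \big(a_{2T}-a_T(\varphi_T\circ)\big)$... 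This is getting circular because the needed inequality $a_{2T}\ge a_T+a_T\circ\varphi_T$ is the wrong direction. The correct resolution: use $a_T(\varphi_{r+kT}(x)) \le a_{2T}(\varphi_{r+(k-1)T}(x)) + c_T$ is false too.

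Let me state the genuinely correct mechanism, which is the one I expect to be the crux. For $s$ in the interval $J_k:=[\max(0,r+(k-1)T),\,r+kT]$ we bound, by subadditivity applied to the splitting at time $r+kT$:
$$a_T(\varphi_{r+kT}(x)) \;\le\; -a_{(r+kT)-s}(\varphi_s(x)) + a_{(r+kT)-s}(\varphi_s(x)) + \dots$$
— no. The honest statement: subadditivity gives $a_{T + ((r+kT)-s)}(\varphi_s(x)) \le a_{(r+kT)-s}(\varphi_s(x)) + a_T(\varphi_{r+kT}(x))$, hence
$$a_T(\varphi_{r+kT}(x)) \ge a_{T+(r+kT)-s}(\varphi_s(x)) - a_{(r+kT)-s}(\varphi_s(x)),$$
a \emph{lower} bound on the summand, useless. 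So I must instead bound the summand above by an integral, i.e. I want $a_T(\varphi_{r+kT}(x)) \le \frac1T\int_{J_k} a_T(\varphi_s(x))\,\ud s + (\text{error})$. Averaging the inequality $a_{2T+((r+kT)-s-T)}(\varphi_{s}(x))\le \dots$ over $s\in J_k$ and using $\sup_{|s'|\le 2T}a_{s'}<c_{2T}\le$ (some multiple of $c_T$ via subadditivity, since $a_{2T}(x)\le a_T(x)+a_T(\varphi_T(x)) < 2c_T$, and similarly $a_{-2T}(x)<2c_T$, so $c_{2T}$ can be taken $\le 2c_T$) yields the bound with total error $\le 2c_T$. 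I expect the main obstacle is exactly this bookkeeping: choosing the intervals $J_k$ so they tile $[0,t]$ with controlled overlap, and tracking that the accumulated boundary errors sum to at most $2c_T$ (one $c_T$ from the leading $a_r(x)$ term, one from the averaging defect), giving the final $3c_T$. Once the interval decomposition and the uniform bound $a_{s'}<c_T$ for $|s'|\le T$ (together with its doubling $a_{s'}<2c_T$ for $|s'|\le 2T$) are set up correctly, the rest is a direct computation with Fubini's theorem on $\int_0^t a_T(\varphi_s(x))\,\ud s = \sum_k \int_{kT}^{(k+1)T} a_T(\varphi_s(x))\,\ud s$.
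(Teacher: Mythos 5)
There is a genuine gap, and it is located exactly at the step you leave unproven. Your plan is: fix the phase $r=t-nT$, use subadditivity once to get $a_t(x)\le a_r(x)+\sum_{k=0}^{n-1}a_T(\varphi_{r+kT}(x))$, and then dominate that particular discrete sum by $\tfrac1T\int_0^t a_T(\varphi_s(x))\,ds$ up to a total error $2c_T$. That last claim is not just unproved in your text (the key inequality is literally written as ``$a_{2T+(\cdots)}(\varphi_s(x))\le\dots$''); it is false in general. Subadditivity only yields \emph{lower} bounds for the sampled value $a_T(\varphi_{r+kT}(x))$ in terms of $a_T(\varphi_s(x))$ at nearby $s$ --- you noticed this yourself --- and the best upper comparison it gives is of the form $a_T(\varphi_{r+kT}(x))\le a_T(\varphi_s(x))+2c_T$ for $s\in J_k$, i.e.\ an error of order $c_T$ \emph{per block}, which after summing over the $n\approx t/T$ blocks produces an error growing linearly in $t$, useless for the lemma. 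Moreover no bookkeeping can repair this: the intermediate inequality $\sum_{k=0}^{n-1}a_T(\varphi_{r+kT}(x))\le 2c_T+\tfrac1T\int_0^t a_T(\varphi_s(x))\,ds$ can fail by an amount proportional to $n$. For instance, take a linear cocycle $(A_t)$ over a periodic flow with $\varphi_T=\operatorname{Id}$ and $a_t=\log\|A_t\|$: all sample points $\varphi_{r+kT}(x)$ coincide, so the left-hand side is $n\,a_T(\varphi_r(x))$, while the right-hand side is essentially $n$ times the \emph{spatial average} of $a_T$ along the orbit; choosing the generator so that $\|A_T\|$ at the point $\varphi_r(x)$ exceeds that average (e.g.\ an elliptic-like $A_T$ there, of norm $>1$ but spectral radius $1$) violates the inequality for large $n$, without contradicting the lemma, which only bounds $a_t(x)$.

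The correct mechanism — and the paper's proof — is to average over the phase \emph{before} converting sums into an integral, not block by block afterwards. For every $s\in[0,T]$ one has, by subadditivity, $a_t(x)\le a_s(x)+\sum_{i=0}^{\lfloor t/T\rfloor}a_T(\varphi_{s+iT}(x))+a_{t-s-(\lfloor t/T\rfloor+1)T}(\varphi_{s+(\lfloor t/T\rfloor+1)T}(x))$, where the two boundary terms are controlled by $c_T$ (the last increment has length at most $2T$ and can be split once more if needed); crucially the number of error terms does not depend on $t$. Integrating this family of inequalities in $s$ over $[0,T]$ and dividing by $T$, the phase-average of the discrete sums equals \emph{exactly} $\tfrac1T\int_0^{(\lfloor t/T\rfloor+1)T}a_T(\varphi_s(x))\,ds$ by a change of variables, and truncating the integral at $t$ costs one more $c_T$ since the overshoot has length at most $T$. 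So the Fubini step you invoke is applied to the one-parameter family of decompositions indexed by the phase $s$, which is what makes the total error independent of $t$; fixing $s=r$ at the outset, as in your first step, already destroys this and cannot be recovered by local averaging.
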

\begin{proof} By our assumptions $a_0(x)=0$ for any $x\in K$. Hence $c_t>0$ for any $t\geq 0$.
	Given $T>0$,  for any $t\geq 3 T$ and any  $s\in[0,T]$, one has 
		$$a_t(x)\leq a_s(x)+\sum_{i=0}^{[\frac{t}{T}]}a_T(\varphi_{s+iT}(x))+a_{t-s-([\frac{t}{T}]+1)T}(\varphi_{s+([\frac{t}{T}]+1)T}(x))
		\leq 2 {c_T}+\sum_{i=0}^{[\frac{t}{T}]} a_T(\varphi_{s+iT}(x)).
		$$
		Then one integrates over the interval $[0,T]$ and divides it by $T$:
		\begin{align*}
		a_t(x) \;&\leq\; 2\cdot {c_T}+\frac{1}{T}\int_{0}^T\sum_{i=0}^{[\frac{t}{T}]} a_T(\varphi_{s+iT}(x))\ud s
		\;\leq\; 2\cdot {c_T}+\frac{1}{T}\int_{0}^{([\frac{t}{T}]+1)T} a_T(\varphi_{s}(x))\ud s
		\\
		\;&\leq\; 3\cdot {c_T}+\frac{1}{T}\int_{0}^{t} a_T(\varphi_{s}(x))\ud s.
		\end{align*}
\end{proof}

\begin{Proposition}~\label{p.hyperbolicity-of-erogodic-measure-in-multi-sing}
 	Let $X\in\cX^1(M)$ and $\Lambda$ be a multi-singular hyperbolic set. Then each regular measure $\mu$ supported on $\Lambda$ is hyperbolic, and its hyperbolic splitting for $(\Psi_t)_{t\in\RR}$ coincides with the singular domination $\cN|_{\Lambda\setminus\sing(X)}=\cN^s\oplus\cN^u$.
	Moreover, there exists $\eta>0$ such that for any regular invariant measure $\mu$ supported on $\Lambda$ and for any $T>0$ large enough,
 	$$\frac{1}{T}\int\log\|\Psi_{T}|_{\cN^s}\|\ud\mu<-\eta \quad \textrm{~and~} \quad
 \frac{1}{T}\int\log \|\Psi_{-T}|_{\cN^u}\|\ud\mu<-\eta.$$ 
 \end{Proposition}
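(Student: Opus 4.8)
The plan is to reduce the statement to an integral estimate along long pieces of regular orbits, using the two escape mechanisms built into the definition of multi-singular hyperbolicity: uniform contraction/expansion outside a neighborhood $V$ of the singularities, and Lorenz-like behaviour (item iii) near the singularities. I will treat $\cN^s$; the statement for $\cN^u$ follows by reversing time. The first step is to fix the isolating neighborhood $V$ of $\Lambda\cap\Sing(X)$ and the constants $\eta, T$ from item (ii) of Definition~\ref{Def:multi-singular-hyperbolic}, and to choose, for each singularity $\sigma$, the dominated splitting $T_\sigma M = E^{ss}\oplus E^c\oplus E^{uu}$ given by item (iii), together with the spectral inequality $\max(\rho^{ss},\rho^{uu})<\min(\rho^c,1/\rho^c)<1$. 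By upper semicontinuity of $\log\|\Psi_T|_{\cN^s}\|$ (which extends continuously to the extended linear Poincaré flow over $\cG^1$, hence is bounded), I can apply Lemma~\ref{l.sub-additive-function} to the subadditive cocycle $a_t(x)=\log\|\Psi_t|_{\cN^s(x)}\|$ restricted to the compact invariant set $K$ obtained by compactifying $\Lambda\setminus\Sing(X)$ inside $\cG^1$ (as in the proof of Theorem~\ref{t.Aprime}). This yields, for any invariant measure $\mu$ and any fixed large $T_0$,
$$\frac{1}{T}\int \log\|\Psi_T|_{\cN^s}\|\,\ud\mu \;\le\; \frac{1}{T_0}\int \log\|\Psi_{T_0}|_{\cN^s}\|\,\ud\mu \;+\; o(1),$$
so it suffices to bound $\frac1{T_0}\int\log\|\Psi_{T_0}|_{\cN^s}\|\,\ud\mu$ by a negative constant independent of $\mu$.

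The second step is a local analysis at each singularity. Near $\sigma$, the extended linear Poincaré flow over a line $L$ close to $\RR X$-directions is governed by $D\varphi_t$ acting on the normal bundle, and the crucial point is that along $E^{ss}(\sigma)$ the contraction rate $\rho^{ss}$ is strictly smaller (in exponential rate) than $\min(\rho^c,1/\rho^c)$; since the Poincaré flow divides out the flow direction — which near $\sigma$ is dominated by the $E^c\oplus E^{uu}$ behaviour — one gets that $\|\Psi_{T_0}|_{\cN^s}\|$ is uniformly smaller than $1$ on a small neighborhood $W\subset V$ of $\Sing(X)$, for $T_0$ large. This is exactly the content of the inequality in item (iii): the normal contraction of $\cN^s$ near a Lorenz-like singularity beats the flow-direction rescaling. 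So there is $\eta_1>0$ with $\frac1{T_0}\log\|\Psi_{T_0}|_{\cN^s(x)}\| < -\eta_1$ for all $x\in W$ (more precisely on the relevant part of $\cG^1$ over $W$). Combining this with item (ii), which gives the same inequality with some $\eta_2>0$ for orbit segments staying outside $V$, one sees that the integrand $\frac1{T_0}\log\|\Psi_{T_0}|_{\cN^s}\|$ is negative outside the "transition region" $V\setminus W$, where it is only bounded.

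The third step handles the transition region, and this is where I expect the main difficulty. A regular orbit may spend long stretches of time in $V\setminus W$ — passing near the singularity along the $E^c\oplus E^{uu}$ directions — and there the sign of the integrand is not controlled pointwise. The standard device is to use a telescoping / Pliss-type argument together with the singular domination $\cN^s\oplus\cN^u$: the domination forces that along any such passage the growth of $\|\Psi_t|_{\cN^s}\|$ is controlled by a fixed geometric factor depending only on the time spent near $\sigma$ versus the contraction gained before entering and after leaving $V$, and the Lorenz-like spectral gap $\rho^{ss}<\min(\rho^c,1/\rho^c)$ ensures this net balance is still contracting. Concretely, I would split the orbit segment $\{0,\dots,t\}$ into maximal excursions in $W$, excursions outside $V$, and bounded-overhead transitions, bound the total measure of the transition set using the isolating-neighborhood property of $V$ (Poincaré recurrence forces $\mu$ to spend a definite fraction of time away from $V$ whenever it gives positive mass near $\Sing(X)$, since $\mu$ is regular so $\mu(\Sing(X))=0$), and conclude that the integral is bounded above by $-\eta$ for a uniform $\eta>0$ once $T$ is large. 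The uniformity in $\mu$ comes from the fact that all constants ($\eta_1,\eta_2,T_0$, the geometric transition factor) depend only on $X$, $\Lambda$, and the chosen splittings, not on the measure. Passing from this to the pointwise-in-$x$ estimates $\|\Psi_t|_{\cN^s(x)}\|<e^{-\eta t}$ whenever $x,\varphi_t(x)\in\Lambda\setminus V$ already given, plus the integral bound, also yields hyperbolicity of $\mu$: by the Oseledets/Birkhoff argument the Lyapunov exponents along $\cN^s$ are $\le -\eta<0$ and along $\cN^u$ are $\ge\eta>0$, so $\mu$ is hyperbolic with hyperbolic splitting $\cN^s\oplus\cN^u$.
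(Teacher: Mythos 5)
Your step 2 contains a genuine error, and it is the load-bearing step of your strategy. It is not true that item (iii) of Definition~\ref{Def:multi-singular-hyperbolic} yields a pointwise bound $\|\Psi_{T_0}|_{\cN^s(x)}\|<e^{-\eta_1 T_0}$ (or even $<1$) for all points of $\Lambda$ in a small neighborhood $W$ of the singularities. Consider a singularity whose center direction $E^c$ is expanding (so $W^{uu}(\sigma)\cap\Lambda=\{\sigma\}$, the ``reversed Lorenz'' case): points of $\Lambda$ can enter $W$ with flow direction close to a line $L\subset E^{ss}(\sigma)$, and over such directions the bundle $\cN^s$ contains a direction close to $E^c(\sigma)$, on which the linear Poincar\'e flow \emph{expands} at rate close to $\log\rho^c>0$ for as long as the flow direction stays near $E^{ss}$. (Symmetrically, near an ordinary Lorenz-like singularity with attracting center, $\Psi_{-t}|_{\cN^u}$ fails to contract over flow directions close to $E^{uu}$.) Item (iii) only gives contraction of the cocycle \emph{rescaled by the flow direction}, i.e.\ of $\|D\varphi_t|_{\RR X}\|\cdot\|\wh\Psi_t|_{\cN^s}\|$ -- this is exactly why the treatment of the measures $\wh\nu^-$ in the proof of Theorem~\ref{thm.robustness-of-multi-singular-hyperbolicity}, and the renormalization cocycles of~\cite{BdL}, are needed; it does not give contraction of $\Psi_t|_{\cN^s}$ itself near the singularity. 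Your step 3 is then only a sketch, and its auxiliary claim that ``Poincar\'e recurrence forces $\mu$ to spend a definite fraction of time away from $V$'' is also false: a regular invariant measure may give mass arbitrarily close to $1$ to any neighborhood of $\sing(X)$, so no uniform time-fraction bound exists (the telescoping control of $\|X\|$ along excursions, as in Claim~2 of the robustness proof, would be needed instead, and you have not carried it out).

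The statement does not require any analysis near the singularities. The paper's argument is much shorter: for a regular ergodic $\mu$, since the maximal invariant set in $V$ consists of singularities, there is an open set $U$ disjoint from $V$ with $\mu(U)>0$; by Oseledec and Poincar\'e recurrence one chooses $x\in U\cap\Lambda$ whose forward exponential growth rate along $\cN^s$ equals the maximal Lyapunov exponent of $\mu$ on $\cN^s$ and which returns to $U$ at arbitrarily large times $t$. For those times both endpoints lie outside $V$, so item (ii) gives $\|\Psi_t|_{\cN^s(x)}\|<e^{-\eta_0 t}$ directly, hence the exponent is $\leq-\eta_0$; the same in reversed time gives the exponents along $\cN^u$, so $\mu$ is hyperbolic with splitting $\cN^s\oplus\cN^u$, and the integral (``moreover'') estimates then follow from the subadditive ergodic theorem and dominated convergence. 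I recommend you restructure the proof along these lines: compute the exponents along return times to a positive-measure set away from $V$, rather than trying to control $\Psi_{T_0}|_{\cN^s}$ uniformly near the singular set, where such control genuinely fails.
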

\proof
Let $\cN|_{\Lambda\setminus\sing(X)}=\cN^s\oplus\cN^u$ be the singular domination over $\Lambda$ for $(\Psi_t)_{t\in\RR}$, let $V$ be the closed neighborhood of $\sing(X)\cap \Lambda$ and let $\eta_0, T_0>0$ be the numbers as in Definition~\ref{Def:multi-singular-hyperbolic}.

Given a regular ergodic measure $\mu$ supported on $\Lambda$, since the maximal invariant set in $V$ is $\Lambda\cap \sing(X)$,
there exists an open set $U$ which is disjoint from $V$ and satisfies $\mu(U)>0$. 
Now, by Oseledec theorem and Poincar\'e recurrence theorem,  one can choose $x\in U\cap\Lambda$ such that
\begin{itemize}
	\item $\lim_{t\rightarrow+\infty}\frac{1}{t}\log\|\Psi_t|_{\cN^s(x)}\|$ is the maximal Lyapunov exponent of $\mu$ along $\cN^s$;
	\item there exists $t>0$ arbitrarily large such that $\varphi_t(x)\in U$.
\end{itemize} 
The second item above and Definition~\ref{Def:multi-singular-hyperbolic} give that there exists $t>T_0$ arbitrarily large such that $\|\Psi_t|_{\cN^s(x)}\|<e^{-\eta_0 t}$, thus  the maximal Lyapunov exponent of $\mu$ along $\cN^s$ is no larger than $-\eta_0.$ Analogously, one can show that  the minimal Lyapunov exponent of $\mu$ along $\cN^u$ is no less than $\eta_0$, hence $\mu$ is hyperbolic.
The moreover part comes from the dominated convergence theorem and sub-additive ergodic theorem. 
\endproof

\subsection{Robustness of the multi-singular hyperbolicity: proof of Theorem~\ref{thm.robustness-of-multi-singular-hyperbolicity}}
By Theorem~\ref{p.robust-singular-domination}, there exist  a $C^1$-neighborhood $\cU_0$ of $X$ and a closed neighborhood $U_0$ of $\Lambda$ such that the maximal invariant set $\Lambda^0_Y$ in $U_0$ for any $Y\in \cU_0$ admits a singular domination. This gives the first item in Definition~\ref{Def:multi-singular-hyperbolic}.

The singularities of $X$ in $\Lambda$ are hyperbolic and will be denoted by $\sigma_1,\cdots,\sigma_\ell$.
Up to reducing $\cU_0$, one can assume that each singularity of $Y\in\cU_0$ in $U_0$ is the continuation of some $\sigma_i$.
In particular the third item in Definition~\ref{Def:multi-singular-hyperbolic} holds for $Y\in \cU_0$ and $\Lambda^0_Y$.
Up to changing the metric, one can also assume that the invariant spaces corresponding to  splitting $E^{ss}\oplus E^{c}\oplus E^{uu}$ over each singularity $\sigma_i$
for $X$ is orthogonal to each other.

Let $V$ be the neighborhood of $\{\sigma_1,\cdots,\sigma_\ell\}$ and $\eta, T>0$ be the numbers given in Definition~\ref{Def:multi-singular-hyperbolic}.
Since $V$ is compact, $V$ remains an isolating neighborhood of the continuation of singularities $\{\sigma_1,\cdots,\sigma_\ell\}$ for the $C^1$-close vector fields.
One only needs to check that there exist $T_0>T$, $\eta_0\in(0,\eta)$ and a small enough open neighborhood $U$ of $\Lambda$ such that for each vector field $Y$ which is  $C^1$ close to $X$, the second property of the definition holds  for the points in the maximal invariant set of $Y$ in $U$ with respect to the neighborhood $V$ and the numbers $\eta_0, T_0.$
In the following we consider the bundle $\cN^s$. The bundle $\cN^u$ can be handled in a similar way.
\medskip

The proof is proceeded by contradiction. We assume that there exist:
\begin{itemize}
	\item a sequence $(X_n)$ which converges to $X$ in $\cX^1(M)$;
	\item a sequence of positive numbers $t_n\to +\infty$;
	\item a sequence of points $(x_n)$  
\end{itemize}
which satisfy:
\begin{itemize}
	\item  the closure of $\orb(x_,\varphi_t^{X_n}(x_n))$ is contained in the $1/n$-neighborhood of $\Lambda$;
	\item $x_n,\varphi_{t_n}^{X_n}(x_n)\notin V \quad \text{and}\quad \|\Psi_{t_n}^{X_n}|_{\cN^s(x_n)}\|\geq e^{-\frac 1 n t_n}.$
\end{itemize}

Let us denote $L_n=\RR X_n(x_n)$ and let $\widehat \Lambda$ be the limit set of the orbits $\{\widehat \varphi_s^{X_n}(L_n)\}_{s\in \RR}$ in $\cG^1$ as $n\to +\infty$:
it is an invariant compact set which projects on an invariant subset of $\Lambda$.
Up to taking subsequences, there exists a probability invariant measure  $\widehat \mu$ on $\widehat \Lambda$
which projects on a measure $\mu$ on $M$, such that
$$\lim_{n\rightarrow\infty}\frac{1}{t_n}\int_0^{t_n}\delta_{\varphi_s^{X_n}(x_n)}\ud s=\mu \quad \text{and}\quad 
\lim_{n\rightarrow\infty}\frac{1}{t_n}\int_0^{t_n}\delta_{\widehat\varphi_s^{X_n}(L_n)}\ud s=\widehat\mu.$$

\begin{Claim-numbered}\label{c.slow-contraction}
	$ \frac{1}{\tau}\int\log\|\wh\Psi_\tau|_{\cN^s}\|\ud\wh\mu\geq 0$ for any $\tau>0$.
\end{Claim-numbered}
\proof
By the definition of extended linear Poincar\'e flow, $\Psi_{s}^{X_n}|_{\cN^s(\varphi_t(x_n))}=\wh\Psi_{s}^{X_n}|_{\cN^s(\wh\varphi_t(L_n))}$ for any $s,t\in\RR$.
Since $X_n$ converges to $X$ in $C^1$-topology and the norm of the time $t$-map of a linear Poincar\'e flow is bounded by the norm of the  time $t$-map of the tangent flow, for each $t>0$ there exists $c_t>0$ such that 
$$\sup_{n\in\NN}\sup_{L\in\cG^1}\sup_{s\in[-t,t]}\log\|\wh\Psi^{X_n}_s(L)\|<c_t.$$
Applying     Lemma~\ref{l.sub-additive-function} to the family of continuous functions $\big\{\log\|\wh\Psi_t^{X_n}|_{\cN^s}\|\big\}_{t>0}$, one gets 
$$\log\|\wh\Psi_{t_n}^{X_n}|_{\cN^s(L_n)}\|\leq 3 c_\tau +\frac{1}{\tau}\int_0^{t_n}\log\|\wh\Psi_\tau^{X_n}|_{\cN^s(\wh\varphi_s^{X_n}(L_n))}\|\ud s\textrm{~~for any $\tau>0$}.$$
The functions $\frac 1 t \log\|\wh\Psi_t^{X_n}|_{\cN^s}\|$ converge to $\frac 1 t \log\|\wh\Psi_t^{X}|_{\cN^s}\|$ as $n\to+\infty$, uniformly in $t$.
The choice of  orbit segment $\{\wh\varphi_s^{X_n}(L_n) \}_{s\in[0,t_n]}$ gives:
$$\limsup_{n\rightarrow\infty}\frac{1}{t_n}\log\|\wh\Psi_{t_n}^{X_n}|_{\cN^s(L_n)}\|\leq\frac{1}{\tau}\int\log\|\wh\Psi_\tau|_{\cN^s}\|\ud\wh\mu.
$$
As $\|\wh\Psi_{t_n}^{X_n}|_{\cN^s(L_n)}\|=\|\Psi_{t_n}^{X_n}|_{\cN^s(x_n)}\|\geq e^{-\frac 1 n t_n}$, one gets the announced inequality.
\endproof

One can decompose $\widehat\mu$ as the barycenter of three invariant probability measures:
$$\widehat\mu=\alpha\cdot\widehat\nu+\beta\cdot \widehat\nu^++\gamma\cdot\widehat\nu^-,$$
where $\widehat \nu,\widehat \nu^+,\widehat \nu^-$ projects to measures $\nu,\nu^+,\nu^-$ on $M$ such that $\nu$ is regular
and $\nu^+$ (resp. $\nu^-$) is supported on the set of singularities $\sigma$ such that $W^{ss}(\sigma)\cap \Lambda\setminus \{\sigma\}=\emptyset$
(resp. $W^{uu}(\sigma)\cap \Lambda\setminus \{\sigma\}=\emptyset$).
We study independently each of these measures.

\paragraph{\it The measure $\widehat \nu$.}
The Proposition~\ref{p.hyperbolicity-of-erogodic-measure-in-multi-sing} implies
$ \frac{1}{\tau}\int\log\|\wh\Psi_\tau|_{\cN^s}\|\ud\wh\nu< 0$ for any $\tau>0$ large.

\paragraph{\it The measure $\widehat \nu^+$.}
Let us consider a singularity $\sigma\in \Lambda$ in the support of $\nu^+$.
Since $W^{ss}(\sigma)\cap \Lambda\setminus \{\sigma\}=\emptyset$,
its preimage in $\widehat \Lambda$ is contained in the projectivization $K^+(\sigma)$ of the space $E^c(\sigma)\oplus E^{uu}(\sigma)$.
Since $E^{ss}(\sigma)$ is orthogonal to $E^c(\sigma)\oplus E^{uu}(\sigma)$,
the bundle $\cN^s$ above $K^+(\sigma)$ coincides with the space $E^{ss}(\sigma)$. This implies:
$$ \frac{1}{\tau}\int\log\|\wh\Psi_\tau|_{\cN^s}\|\ud\wh\nu^+< 0\quad \text{for any $ \tau>0$ large.}$$

\paragraph{\it The measure $\widehat \nu^-$.}
We then consider a singularity $\sigma\in \Lambda$ in the support of $\nu^-$.
Since $W^{uu}(\sigma)\cap \Lambda\setminus \{\sigma\}=\emptyset$,
and since $\sigma$ is accumulated by points $\varphi^{X_n}_{s_n}(x_n)$ with $0<s_n<t_n$,
the center direction $E^c(\sigma)$ has to be unstable.
Note also that the preimage of $\sigma$ in $\widehat \Lambda$ is contained in the projectivization $K^-(\sigma)$ of the space $E^{ss}(\sigma)\oplus E^{c}(\sigma)$
and that above $K^-(\sigma)$ the bundle $\cN^s$ is contained in $E^{ss}(\sigma)\oplus E^{c}(\sigma)$.
As $\sigma$ is Lorenz-like, there exist $\eta_\sigma>0$ and $T_\sigma>0$ such that  $\|D\varphi_t|_L\|\cdot \|\wh\Psi_t|_{\cN^s(L)}\|<e^{-2\eta_\sigma t}$ for any $t>T_\sigma$
and $L\in K^-(\sigma)$.
	
	Let us fix $\varepsilon>0$ and $\tau>T_\sigma$. 
	There exists an open neighborhood $V_\sigma$ of $K^-(\sigma)$ such that
	\begin{itemize}
		\item[(a)]
		$\widehat \mu(V_\sigma\setminus K^-(\sigma))\cdot \max (|\log\|\wh \Psi_\tau\||)< \varepsilon$,
		\item[(b)]  for any $L\in V_\sigma$, one has $\|D\varphi_\tau|_L\|\cdot \|\wh\Psi_\tau|_{\cN^s(L)}\|<e^{-\frac 32\eta_\sigma\tau}.$
	\end{itemize}
	This in particular implies that for $n$ large and any $s\in[0,t_n]$ with $\wh\varphi^{X_n}_s(L_n)\in V_\sigma$,  one has 
	\begin{eqnarray}~\label{eqn:sectional-contraction}
	\|D\varphi^{X_n}_\tau|_{\wh\varphi^{X_n}_s(L_n)}\|\cdot \|\wh\Psi_\tau|_{\cN^s(\wh\varphi^{X_n}_s(L_n))}\|<e^{-\eta_\sigma\tau}.
	\end{eqnarray}

Let us fix $\delta>0$ small. For  each $n$, 
	one can  introduce finitely many intervals $I^1_n,\dots,I_n^{m_n}$ that are the connected components of the set $\{s\in [0,t_n], \widehat \varphi_s^{X_n}(L_n)\in V_\sigma\}$
	such that $\wh\varphi_{I^i_n}(L_n)$ meets the $\delta$-neighborhood of $\sigma$. From (a), for $n$ large enough one has
	\begin{eqnarray}~\label{eqn:approximation-of-Ns}
	\bigg|\frac{1}{t_n}\sum_{i=1}^{m_n}\int_{s\in I_n^i} \log\|\wh\Psi^{X_n}_\tau|_{\cN^s(\varphi_s^{X_n}(L_n))}\|\ud s-\int \log\|\wh\Psi_\tau|_{\cN^s}\|\ud (\gamma\cdot\wh\nu^-|_{K^-(\sigma)})\bigg|<2\e.
	\end{eqnarray}	

\begin{Claim-numbered}
If $\delta$ is small enough, then for $n$ large
	$
	\frac{1}{t_n }\sum_{i=1}^{m_n}\int_{s\in I_n^i}\log\|D\varphi_\tau^{X_n}|_{\RR X_n(\varphi_s(x_n))}\|\ud s
	>-\varepsilon.$
	\end{Claim-numbered}
\begin{proof}
Let us denote $I_n^i=(a_n^i,b_n^i)$. Then one has 
	\begin{align*}
	\frac{1}{t_n }\sum_{i=1}^{m_n}\int_{s\in I_n^i}\log\|D\varphi_\tau^{X_n}&|_{\RR X_n(\varphi_s(x_n))}\|\ud s
	=\frac{1}{t_n}\sum_{i=1}^{m_n}\int_{a_n^i}^{b_n^i}\log\|X_n(\varphi_{s+\tau}(x_n))\|-\log\|X_n(\varphi_s^{X_n}(x_n))\|\ud s\\
	&\leq \frac{1}{t_n}\sum_{i=1}^{m_n}\bigg(\int_{b_n^i}^{b_n^i+\tau}\log\|X_n(\varphi_{s}(x_n))\|\ud s-\int_{a_n^i}^{a_n^i+\tau}\log\|X_n(\varphi_s^{X_n}(x_n))\|\ud s\bigg).
	\end{align*}
	Since $\varphi_{a_n^i}^{X_n}(x_n),\varphi_{b_n^i}^{X_n}(x_n)\in \partial V_\sigma$, there exists $c>0$ such that $$\sup_{n\in\NN}\sup_{x\in\partial V_\sigma,s\in(-\tau,\tau)}\log\|X_n(\varphi_s(x))\|<c.$$
If $\delta>0$ is small, then $|I_n^i|$ is arbitrarily large, so that
$$\frac{1}{|I_n^i|}\bigg(\int_{b_n^i}^{b_n^i+\tau}\log\|X_n(\varphi_{s}(x_n))\|\ud s-\int_{a_n^i}^{a_n^i+\tau}\log\|X_n(\varphi_s^{X_n}(x_n))\|\ud s\bigg)$$
is arbitrarily close to $0$.
\end{proof}

	By Equations~\eqref{eqn:sectional-contraction},~\eqref{eqn:approximation-of-Ns} and the previous claim, for $n$ large, one has 
	\begin{align*}
	\int\log\|\wh\Psi_\tau|_{\cN^s}\|\ud (\gamma\cdot\wh\nu^-|_{K^-(\sigma)})
	&\leq \frac{1}{t_n }\sum_{i=1}^{m_n}\int_{s\in I_i} \log\|\wh\Psi^{X_n}_\tau|_{\cN^s(\varphi_s^{X_n}(x_n))}\|+\log\|D\varphi_\tau^{X_n}|_{\RR X_n(\varphi_s(x_n))}\|\ud s +3\e
	\\
	&\leq -\eta_\sigma\cdot \tau \frac{1}{t_n}\sum_{i=1}^{m_n}|I_n^i|+3\e \leq -\eta_\sigma \big(\gamma-\e\big)+3\e<0.
	\end{align*}
	This proves that $\int\log\|\wh\Psi_\tau|_{\cN^s}\|\ud (\wh\nu^-|_{K^-(\sigma)})<0$ for $\tau>0$ large enough. 
	As there are only finitely many singularities in $\Lambda$, this gives
	$\int\log\|\wh\Psi_\tau\|\ud\wh\nu^-<0$ for $\tau>0$ large.
\medskip

To summarize, there exists $\tau>0$ arbitrarily large such that 	$\frac 1 \tau\int\log\|\wh\Psi_\tau\|\ud\wh\mu<0$ which contradicts Claim~\eqref{c.slow-contraction}.
Theorem~\ref{thm.robustness-of-multi-singular-hyperbolicity} is now proved.
\endproof

 \subsection{A criterion for Lorenz-like singularities}
 In the definition of multi-singular hyperbolicity we require the singularities to be Lorenz-like.
 This is often a consequence of the other properties of the definition.
 
 \begin{Proposition}~\label{p.lorenz-like-for-non-isolated-sing}
 	Let $X\in\cX^1(M)$, let $\Lambda$ be an invariant compact set satisfying (i) and (ii) in Definition~\ref{Def:multi-singular-hyperbolic} and let $\sigma$ be a hyperbolic singularity in $\Lambda$.
 	If there exist  a sequence $(y_n)_{n\in\NN}$ in $\Lambda$ and a neighborhood $V_\sigma$ of $\sigma$ such that:
 	\begin{itemize}
 		\item $y_n$ tends to $\sigma$;
 		\item the forward and backward orbit of $y_n$ intersects $M\setminus V_\sigma.$
 	\end{itemize}  Then $\sigma$ is Lorenz-like.
 \end{Proposition}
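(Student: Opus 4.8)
The plan is to use the assumed singular dominated splitting $\cN^s\oplus\cN^u$ together with the Lorenz-like criterion hidden in Proposition~\ref{p.strong-stable-outside}: the hyperbolic splitting $T_\sigma M=E^s(\sigma)\oplus E^u(\sigma)$ should already refine, on one of its two factors, into a dominated splitting with a one-dimensional center, and the sign condition on $\lambda^s+\lambda^u$ should follow from domination of the linear Poincar\'e flow along orbits passing near $\sigma$. First I would set $i=\dim(\cN^s)$ and compare $i$ with $\ind(\sigma)=\dim(E^s(\sigma))$; the two natural cases are $i<\ind(\sigma)$ and $i\ge\ind(\sigma)$ (equivalently $\dim(M)-1-i<\dim(E^u(\sigma))$), which are symmetric under time reversal, so I would treat $i<\ind(\sigma)$ and obtain the first alternative in the definition of Lorenz-like (a dominated splitting $E^s=E^{ss}\oplus E^c$ with $\dim E^c=1$ and $\lambda^s+\lambda^u>0$), the other case being identical after reversing time.

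The key step is to produce, from the sequence $y_n\to\sigma$ whose forward and backward orbits leave $V_\sigma$, a line $L\in\cG^1(\sigma)$ in the closure $\widehat\Lambda$ of $\{\RR X(x):x\in\Lambda\setminus\Sing(X)\}$ which lies in the projective space of $E^u(\sigma)$. Indeed, since the backward orbit of $y_n$ escapes $V_\sigma$ but $y_n\to\sigma$, for each $n$ there is a last moment where the backward orbit is on $\partial V_\sigma$; taking $V_\sigma$ small and using the hyperbolic structure at $\sigma$ (an invariant cone field around $E^u$ for the backward flow, or the classical fact that points accumulating a hyperbolic fixed point with escaping backward orbit have their tangent directions accumulating the unstable space), the directions $\RR X(\varphi_{-t}(y_n))$ accumulate a line $L\subset E^u(\sigma)$; moreover $L\in\widehat\Lambda$ because $\varphi_{-t}(y_n)\in\Lambda\setminus\Sing(X)$. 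Symmetrically the forward orbit gives a line in $E^s(\sigma)$; this is where the hypothesis that \emph{both} orbits escape is used. Then Proposition~\ref{p.strong-stable-outside}, applied to the $(\widehat\varphi_t)$-invariant compact set $\widehat\Lambda\cap\cG^1(\sigma)$ (which carries the extended dominated splitting of index $i<\ind(\sigma)$ inherited from the singular domination) and to the line $L$, yields a dominated splitting $E^s(\sigma)=E^{ss}(\sigma)\oplus E^{cs}(\sigma)$ with $\dim E^{ss}=i$.

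It remains to upgrade $\dim E^{cs}$ to $1$ and to get the eigenvalue inequality $\lambda^s+\lambda^u>0$. For the dimension count I would argue that if $\dim E^{cs}\ge 2$ then one could iterate Proposition~\ref{p.strong-stable-outside} (or directly use the domination $\cN^s\oplus\cN^u$ restricted over the orbit of $L$, where $\cN^s$ has exactly codimension-one-in-$E^s$ complement) to conclude $\dim E^{cs}=1$; here the mild compatibility of indices — $i=\ind(\sigma)-1$ in this case, forced by the fact that $\sigma$ is accumulated from both sides and $\widehat\Lambda$ meets both $\PP E^s$ and $\PP E^u$ — is what pins the center dimension. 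For the sign of $\lambda^s+\lambda^u$: along the line $L\subset E^u(\sigma)$ the linear Poincar\'e flow restricted to the $E^{cs}(\sigma)$-direction equals $D\varphi_t|_{E^{cs}}$ (since $E^s\perp L$), so it contracts with exponent $\lambda^s=\lambda^{cs}$, while along a line $L'\subset E^{ss}(\sigma)\subset\cN^s$-side the complementary bundle contains $E^u(\sigma)$ and the domination $\cN^s\oplus\cN^u$ (or the Lorenz-like reformulation in Section~\ref{ss.lorenz-like}, which the excerpt states is equivalent to (iii)) forces $\lambda^{cs}+\lambda^u>0$, i.e. $\lambda^s+\lambda^u>0$. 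Thus $\sigma$ is Lorenz-like.

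I expect the main obstacle to be the first of these last two points made rigorous, namely controlling the limit directions $\RR X(\varphi_{\pm t}(y_n))$ near the hyperbolic singularity: one must rule out that the escaping orbit leaves $V_\sigma$ "too early" so that the accumulated direction is not forced into $E^u(\sigma)$. This is handled by shrinking $V_\sigma$ and using the standard invariant-cone-field description of orbits near a hyperbolic fixed point (the analogue of the Inclination Lemma used in the proof of Proposition~\ref{p.uniform-bundle}), but it is the step requiring the most care, and it is exactly where the two-sided escaping hypothesis enters.
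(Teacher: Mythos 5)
There is a genuine gap at the heart of the argument: the step where you conclude that $\dim E^{cs}=1$ and $\lambda^{s}+\lambda^{u}>0$. Your first part (producing limit lines of $\RR X$ in $\PP E^u(\sigma)$ and $\PP E^s(\sigma)$ and invoking Proposition~\ref{p.strong-stable-outside} to split $E^s(\sigma)=E^{ss}\oplus E^{cs}$ with $\dim E^{ss}=i$) is correct but essentially redundant: hypothesis (i) of Definition~\ref{Def:multi-singular-hyperbolic} is the \emph{singular} domination, whose item (ii) already supplies at $\sigma$ a dominated splitting $E^{ss}\oplus F$ (or $E\oplus E^{uu}$) with the escaping property $W^{ss}(\sigma)\cap\Lambda=\{\sigma\}$; combined with the fact that your escaping sequence $y_n$ forces $W^{s}(\sigma)\cap\Lambda\setminus\{\sigma\}\neq\emptyset$ and $W^{u}(\sigma)\cap\Lambda\setminus\{\sigma\}\neq\emptyset$, this pins down which alternative occurs and gives the refinement $E^{ss}\oplus E^{cs}\oplus E^u$ directly. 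The real content of the proposition is the eigenvalue inequality, and your argument for it does not work. Appealing to the reformulation of Section~\ref{s.preliminaries}.e ("equivalent to (iii)") is circular, since (iii) is exactly what is being proved and only (i) and (ii) are assumed. The claim that the domination $\cN^s\oplus\cN^u$ "forces $\lambda^{cs}+\lambda^{u}>0$" is false: domination only compares the growth on $\cN^s$ with the growth on $\cN^u$ along the same orbits of the extended flow, and over lines $L\subset E^u(\sigma)$ it merely yields $E^{ss}\prec E^{cs}\oplus(\cdot)$, giving no control on the sum $\lambda^{cs}+\lambda^{u}$. Likewise, $i=\ind(\sigma)-1$ is not "forced" by $\widehat\Lambda$ meeting both $\PP E^s$ and $\PP E^u$; asserting it amounts to assuming $\dim E^{cs}=1$, i.e. the conclusion.

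What is missing is any use of hypothesis (ii) of Definition~\ref{Def:multi-singular-hyperbolic}, which is indispensable, and a quantitative use of the orbits of the $y_n$ (you use them only to produce limit directions). The paper's proof runs as follows: after reducing to the splitting $E^{ss}\oplus E^{cs}\oplus E^u$ with $W^{ss}(\sigma)\cap\Lambda=\{\sigma\}$, set $\lambda=\lambda_1+\lambda_2$, the sum of the two smallest exponents along $E^{cs}\oplus E^u$. In a small neighborhood $U_\sigma$ one builds an invariant cone field $\cC^{cu}$ around $E^{cs}\oplus E^u$ in which the area of every $2$-plane grows at most like $c\,e^{(\lambda+\e)t}$ along orbit segments staying in $U_\sigma$; the escaping property $W^{ss}(\sigma)\cap\Lambda=\{\sigma\}$ guarantees $\RR X(x)\oplus\cN^u(x)\subset\cC^{cu}(x)$ for $x\in\Lambda\cap U_\sigma$. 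The hypothesis on $y_n$ provides arbitrarily long segments $\{\varphi_s(x_n)\}_{s\in[0,t_n]}$ crossing $U_\sigma$ from boundary to boundary, to which this upper bound applies. Extending these segments by a uniformly bounded time so that their endpoints exit the isolating neighborhood $V$, item (ii) (uniform contraction of $\cN^s$ and expansion of $\cN^u$ outside $V$, plus boundedness of $\|X\|$-ratios there) gives a lower bound $c\,e^{\eta t}$ for the area growth of $2$-planes in $\RR X\oplus\cN^u$. Comparing the two bounds as $t_n\to\infty$ yields $\lambda+\e\geq\eta$ for every $\e$, hence $\lambda\geq\eta>0$; this simultaneously forces $\dim E^{cs}=1$ and $\lambda^{s}+\lambda^{u}>0$. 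Your identification of the "main obstacle" (controlling limit directions near $\sigma$) misses this: that part is routine, whereas the area-growth comparison using (ii) is the step your proposal lacks and cannot be replaced by soft domination arguments.
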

 \proof
 Let $\cN|_{\Lambda\setminus\sing(X)}=\cN^s\oplus\cN^u$ be the singular domination over $\Lambda$ for $(\Psi_t)_{t\in\RR}$ and let $i$ be its index.  Let $\sigma\in\Lambda$ be a  singularity as in the assumption. Then $W^s(\sigma)\cap\Lambda\setminus\{\sigma\}\neq\emptyset$ and $W^u(\sigma)\cap\Lambda\setminus\{\sigma\}\neq\emptyset.$ 
 By the definition of singular domination, 
 one gets that 
 \begin{itemize}
 	\item if $\ind(\sigma)>i$, then  $E^{s}(\sigma)=E^{ss}(\sigma)\oplus E^{cs}(\sigma)$ with $\dim(E^{ss}(\sigma))=i$ and $W^{ss}(\sigma)\cap\Lambda=\{\sigma \}$; 
 	\item if $\ind(\sigma)\leq i$, then $E^{u}(\sigma)=E^{cu}(\sigma)\oplus E^{uu}(\sigma)$ with $\dim(E^{uu}(\sigma))=\dim(M)-1-i$ and $W^{uu}(\sigma)\cap\Lambda=\{\sigma \}$.
 \end{itemize}

 Without loss of generality, from now on, we assume  that $\sigma$ admits a partially hyperbolic splitting for $(D\varphi_t)_{t\in \RR}$ of the form $E^{ss}\oplus E^{cs}\oplus E^u$ where $\dim(E^{ss})=i$ and $W^{ss}(\sigma)\cap\Lambda=\{\sigma\}$.

 Up to changing the  Riemannian metric, one can assume that each bundle in the splitting $E^{ss}\oplus E^{cs}\oplus E^u$ is orthogonal to the other. Let $\lambda_1\leq \lambda_2\leq \cdots\leq\lambda_k$ be all the Lyapunov exponents of $\sigma$ along $E^{cs}\oplus E^u$, where $k=\dim(E^{cs}\oplus E^u).$
 It remains to prove that $E^{cs}\oplus E^u$ is sectionally expanding under $(D\varphi_t)_{t\in\RR},$ that is, $\lambda_1+\lambda_2>0$, which in return  implies that $\dim(E^{cs})=1$.  
 
 Let $\lambda=\lambda_1+\lambda_2$.
 For $\e>0$, consider a small neighborhood $U_\sigma\subset V_\sigma$ of $\sigma$ where one can define a cone field $\cC^{cu}$ with respect to $E^{cs}\oplus E^u$ such that 
 \begin{itemize}
 	\item $\cC^{cu}$ is $(D\varphi_t)_{t>T}$-invariant for some constant $T>0$;
 	\item there exists $c>1$ such that for any $x\in U_\sigma$, any $t>T$ satisfying $\{\varphi_s(x)\}_{s\in[0,t]}\subset U_\sigma$  and any $\dim(E^{cs}\oplus E^u)$-dimensional linear space $F\subset \cC^{cu}(x)$, there exists $2$-plane $P\subset F$ so that  
 	$$\det(D\varphi_t|_F)\leq c\cdot e^{(\lambda+\e)t}.$$
 \end{itemize} 
 Moreover since $W^{ss}(\sigma)\cap\Lambda=\{\sigma\}$, we can assume that $\RR X(x)\oplus\cN^u(x)\subset\cC^{cu}(x)$ for any $x\in\Lambda\cap U_\sigma$.
 
 Consider the sequence of points $y_n$ as in the assumption.  Let $x_n$ be the first intersection of the backward orbit of $y_n$ with the boundary of $U_\sigma$ and let $t_n>0$ the first time that the forward orbit of $x_n$ intersects the boundary of $U_\sigma$. In particular, 
 $$x_n,\varphi_{t_n}(x_n)\in\partial U_\sigma\textrm { and   } y_n\in \{\varphi_s(x_n) \}_{s\in(0,t_n)}\subset U_\sigma.$$
 Hence there exists a $2$-plane $P\subset \RR X(x_n)\oplus \cN^u(x_n)$ such that 
 \begin{eqnarray}~\label{eq:sectional-expanding-rate}
 \det(D\varphi_{t_n}|_{P})\leq c\cdot e^{(\lambda+\e)t_n}.
 \end{eqnarray}
 
 Let $V$ be the compact isolating neighborhood in (ii) from Definition~\ref{Def:multi-singular-hyperbolic}. Then there exists $l>0$ such that for any $x\in\partial U_\sigma$ and $t>0$ with $\varphi_t(x)\in\partial U_\sigma$ and $\{\varphi_s(x)\}_{s\in(0,t)}\subset U_\sigma$, the forward orbit of $\varphi_t(x)$ and the backward orbit of $x$ leave $U_\sigma$ in time smaller than $l.$ Therefore, there exist $s_n,\tau_n\in(-0,l)$ such that $\varphi_{-s_n}(x_n)\in\partial V$ and $\varphi_{t_n+\tau_n}(x_n)\in\partial V.$ 
 From (ii) in Definition~\ref{Def:multi-singular-hyperbolic}, there exist $c,\eta>0$ such that
  \begin{eqnarray}~\label{eq:sectional-expanding-rate-2}
 \det(D\varphi_{t_n+s_n+\tau_n}|_{P})\geq c\cdot e^{\eta (t_n+s_n+\tau_n)}\textrm{~for any $2$-plane  $P\subset \RR X(\varphi_{-s_n}(x_n))\oplus \cN^u(\varphi_{-s_n}(x_n))$}.
 \end{eqnarray}
 As $s_n$ and $\tau_n$ are uniformly bounded, by Equations~\eqref{eq:sectional-expanding-rate} and~\eqref{eq:sectional-expanding-rate-2}, one has $\lambda+\e\geq \eta$. The arbitrariness of $\e$ implies that $\lambda\geq\eta>0.$
 \endproof 
 \subsection{Uniform and singular hyperbolicities: proof of Theorem~\ref{t.comparison}}
 
 \paragraph{a.} We prove the first item of Theorem~\ref{t.comparison}. Let us consider a uniformly hyperbolic set $\Lambda$
 such that for each $\sigma\in \sing(\Lambda)$ both $W^{s}(\sigma)\cap \Lambda\setminus \{\sigma\}$
 and $W^{u}(\sigma)\cap \Lambda\setminus \{\sigma\}$ are non empty.
 We first prove that $\sing(X)=\emptyset$ since the uniform hyperbolicity
 along regular orbits in $W^s(\sigma)$ and $W^u(\sigma)$ gives incompatible stable dimension at $\sigma$.
 The restriction of the splittings $E^s\oplus \RR X$ and $\RR X\oplus E^u$ to the normal bundle induces a dominated splitting of the linear Poincar\'e flow
 which satisfies the definition of multi-singular hyperbolicity.
 
 \paragraph{b.} Conversely, if $\Lambda$ is a multi-singular hyperbolic set which does not contain any singularity, the item (ii) in Definition~\ref{Def:multi-singular-hyperbolic}
 shows that the bundles $\cN^s$ and $\cN^u$ are respectively uniformly contracted and uniformly expanded by the linear Poincar\'e flow,
 whereas the action of the differential on the bundle $\RR X$ remains bounded.
 Then Proposition~\ref{p.domination-criterion}  implies that the tangent bundle over $\Lambda$ admits dominated splittings
 $TM=E^s\oplus F=E\oplus E^u$ with $\dim(E^s)=\dim(\cN^s)$, $\dim(E^u)=\dim(\cN^u)$.
 The existence of a finest dominated splitting (see~\cite[Appendix B.1]{BDV}) then gives a dominated splitting
 $TM=E^s\oplus E^c\oplus E^u$ with $\dim(E^c)=1$. Since the invariant bundle $\RR X$ remains bounded,
 it remains in uniform cones transverse to $E^s$ and $E^u$. The invariance and the domination then give
 $E^c=\RR X$, proving that $\Lambda$ is uniformly hyperbolic. The proof of the first item is complete.
 
\paragraph{c.}  We now turn to the second item of Theorem~\ref{t.comparison} and consider an invariant compact set $\Lambda$ which is singular hyperbolic.
 We will assume for instance that it has a dominated splitting of the form $TM|_\Lambda=E^{ss}\oplus E^{cu}$, as in Definition~\ref{Def:singular-hyperbolicity}.

We first notice that at each point $x\in \Lambda$ we have $X(x)\in E^{cu}(x)$.
 Indeed if one assumes by contradiction that $x$ is regular and satisfies $X(x)\not\in E^{cu}(x)$, the backward orbit of $x$ remains uniformly transverse to the bundle $E^{cu}$
 and avoids a neighborhood of the singularities. The $\alpha$-limit set of $x$ is thus non-singular and (by domination), the restriction of the vector fields $X$ is tangent to $E^{ss}$.
 This is a contradiction since for any probability measure on $\alpha(x)$,
 the Lyapunov exponent in the direction of the flow is not negative.

Since $X\in E^{cu}$, the linear Poincar\'e flow also admits a dominated splitting $\cN=\cN_1\oplus \cN_2$ of index $\dim(E^{ss})$,
obtained by intersecting $E^{ss}(x)\oplus \RR X(x)$ and $E^{cu}(x)$ with $\cN(x)$ at each regular point $x$.
Moreover for each singularity $\sigma\in \Lambda$, we also have $W^{ss}(\sigma)\cap \Lambda=\{\sigma\}$.
Consequently $\Lambda$ has a singular domination of index $\dim(E^{ss})$.
Since $E^{ss}$ is uniformly contracted, the bundle $\cN_1$ is uniformly contracted by the linear Poincar\'e flow.

Let $V$ be a neighborhood of $\sing(X)\cap \Lambda$.
For any regular point $x$ and any unit vector $v\in\cN_1$, the volume growth under the tangent flow $D\varphi_t$
along the plane spanned by $v$ and $X(x)$ is equal to $\|\Psi_t(x).v\|\frac{\|X(\varphi_t(x))\|}{\|X(x)\|}$:
the singular hyperbolicity implies that there exist $T_0,\eta>0$ such that for any $x$ and $t>T_0$,
this quantity is larger than $\exp(2\eta t)$. When $x,\varphi_t(x)$ are outside $V$, the quotient $\frac{\|X(\varphi_t(x))\|}{\|X(x)\|}$
is bounded away from $0$ by a constant $1/C$.
Choose $T>T_0$ such that $\exp(\eta T)>C$ which implies $\|\Psi_t(x).v\|\geq \exp(\eta t)$.
This concludes the second item of Definition~\ref{Def:multi-singular-hyperbolic}.

Since each singularity is hyperbolic and $W^s(\sigma)\cap \Lambda\setminus\{\sigma\}\neq \emptyset$,
there exists a stable direction inside $E^{cu}(\sigma)$. The singular hyperbolicity implies that $E^{cu}(\sigma)$
decomposes as $E^{cu}(\sigma)=E^c\oplus E^u$ with $\dim(E^c)=1$.
We have thus proved that each singularity is Lorenz like. This ends the proof that $\Lambda$
is multi-singular hyperbolic.

\paragraph{d.} Finally we consider a multi-singular hyperbolic set $\Lambda$ with a singular dominated splitting $\cN=\cN^s\oplus \cN^u$ as in Definition~\ref{Def:multi-singular-hyperbolic} and we assume that
at any singularity $\sigma\in\Lambda$:
\begin{itemize}
\item both $W^{s}(\sigma)\cap \Lambda\setminus \{\sigma\}$ and $W^{u}(\sigma)\cap \Lambda\setminus \{\sigma\}$ are non empty,
\item there exists a  dominated splitting $T_\sigma M=E^{ss}\oplus E^c\oplus E^{uu}$, where $E^{ss},E^{uu}$ have the same dimensions as $\cN^s, \cN^u$ and where $E^c$ is a stable line.
\end{itemize}
The case where $E^c(\sigma)$ is an unstable line for all singularity $\sigma$ can be handled analogously.
Up to changing the metric, the splitting can be assumed orthogonal at each space $T_\sigma M$.

The singular domination implies that at each singularity
either $W^{ss}(\sigma)\cap \Lambda\setminus \{\sigma\}$ or $W^{u}(\sigma)\cap \Lambda\setminus \{\sigma\}$ is empty.
Since $E^c$ is contracting and $W^{u}(\sigma)\cap \Lambda\setminus \{\sigma\}\neq \emptyset$, one concludes that
\begin{equation}\label{e.ss-outside}
W^{ss}(\sigma)\cap \Lambda\setminus \{\sigma\}=\emptyset .
\end{equation}
We then prove the uniform contraction of the bundle $\cN^s$ under the flow $(\Psi_t)$.
Let us consider any regular $x\in \Lambda$ and any $t>0$.

We choose $\eta_0,T_0>0$ and a small open neighborhood $V$ of $\sing(X)\cap \Lambda$ as in the item (ii) of Definition~\ref{Def:multi-singular-hyperbolic}:
if $x$ and $\varphi_t(x)$ do not belong to $V$ and $t\geq T_0$, one has $\|\Psi_t|_{\cN^s}(x)\|\leq \exp(-\eta_0 t)$.

If the orbit $(\varphi_s(x))_{s\in[0,t]}$ is contained in $V$, the property~\eqref{e.ss-outside} implies that
$\RR X(\varphi_s(x))$ is close to a line in $E^c\oplus E^{uu}$.
Then the dominated splitting $\cN^s\oplus \cN^u$ and the fact that $E^{ss}$ is orthogonal to $E^c\oplus E^u$
imply that $\cN^s(\varphi_s(x))$ is close to $E^{ss}(\sigma)$.
Consequently, there exist  $\eta_1,T_1>0$ such that if $t\geq T_1$, then $\|\Psi_t|_{\cN^s}(x)\|\leq \exp(-\eta_1 t)$.
We choose $C>0$ such that for any piece of orbit of length $t\leq \max(T_0,T_1)$,
we have $\|\Psi_t|_{\cN^s}(x)\|\leq C$. We also set $\eta=\min(\eta_0,\eta_1)$.

If the orbit segment $(\varphi_s(x))_{s\in[0,t]}$ is not entirely contained in $V$, we consider the largest interval $[t_1,t_2]\subset [0,t]$ such that
$\varphi_{t_i}(x)\notin V$ (we take $t_1=0$ provided that $x\in V$). Then the previous estimates give
$$\|\Psi_t|_{\cN^s}(x)\|\leq C^3\cdot \exp(3\eta(T_1+T_0))\cdot \exp(-\eta t).$$
This shows that $\cN^s$ is uniformly contracted by the linear Poincar\'e flow.
By Proposition~\ref{p.domination-criterion},
there exists a dominated splitting $TM|_\Lambda=E^{ss}\oplus F$ with $\dim(E^{ss})=\dim(\cN^s)$.

Any ergodic measure $\mu$ on  $\Lambda$ is
\begin{itemize}
\item either supported on a Lorenz-like singularity $\sigma$: by definition the sum of the two smallest Lyapunov exponents along $F(\sigma)=E^{cu}$ is positive,
\item or a regular measure having one vanishing Lyapunov exponent along $X$ and other positive Lyapunov exponents along $F$ due to Proposition~\ref{p.hyperbolicity-of-erogodic-measure-in-multi-sing}.
\end{itemize}
This implies that for the tangent flow above $\Lambda$, the volume along $2$-planes contained in $F$ grow exponentially and the set $\Lambda$ is singular hyperbolic.

The proof of Theorem~\ref{t.comparison} is now complete. \qed
 
 \section{Renormalization of multi-singular hyperbolicity}\label{s.equi-definition}
 In this section we compare Definition~\ref{Def:multi-singular-hyperbolic} with the definition given by Bonatti and  da Luz in~\cite{BdL}.
 We first recall some terminology about extended flows. 
 
 \subsection{The extended maximal invariant set}
 Let $X\in \cX^1(M)$, and $\Lambda$ be an invariant compact set. Let $\sigma\in \Lambda$ be a hyperbolic singularity and consider the finest dominated splitting for $(D\varphi_t)_{t\in\RR}$:
 $$T_\sigma M=E^s_k\oplus_{\prec} \cdots\oplus_\prec E^s_1\oplus_\prec E^u_1\oplus_\prec\cdots\oplus_\prec  E^u_l.$$
 Let $i$ be the smallest integer such that the strong stable manifold of $\sigma$ tangent to $E^s_k\oplus\cdots\oplus E^s_i$ intersects $\Lambda$
 only at $\sigma$. The space $E^{ss}_{\sigma,\Lambda}:=E^s_k\oplus\cdots\oplus E^s_i$ is called \emph{escaping stable space of $\sigma$ in $\Lambda$}.
 Analogously, we define the \emph{escaping unstable space of $\sigma$ in $\Lambda$}, and we denote it as $E^{uu}_{\sigma, \Lambda}:=E^s_j\oplus\cdots\oplus E^s_l$.  Now, the \emph{center space} at $\sigma$ is defined as 
 $E^c_{\sigma,\Lambda}=E^s_{i-1}\oplus \cdots \oplus E^s_1\oplus E^u_1\oplus \cdots\oplus E^u_{j-1}.$ We will denote by $\PP^c_{\sigma,\Lambda}$ the projective space of the center space $E^c_{\sigma,\Lambda}$. 
 
 \begin{Definition}
 	Let $X\in\cX^1(M)$ and $\Lambda$ be an invariant compact set whose singularities are all hyperbolic.
 	The \emph{extended invariant set of $\Lambda$} is the compact subset of $\cG^1$ defined by
 	$$B(X,\Lambda)=\overline{\big\{\RR X(x): x\in \Lambda\setminus\sing(X) \big\}}\cup\bigcup_{\sigma\in\sing(X)\cap \Lambda} \PP^c_{\sigma,\Lambda}.$$
 \end{Definition}
 
 \begin{Proposition}[Proposition 38 in \cite{BdL}]
 	Let $X\in\cX^1(M)$ and $U$ be a compact set whose singularities are all hyperbolic.
 	Let $\Lambda_{X,U}$ be the maximal invariant set of $X$ in $U$.
 	Then there exists a $C^1$-neighborhood $\cU$ of $X$ where the map $Y\in\cU\mapsto B(Y,\Lambda_{U,X})$ is upper semi-continuous.   
 \end{Proposition}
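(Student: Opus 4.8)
The plan is to establish upper semi-continuity of $Y\mapsto B(Y,\Lambda_{U,X})$ by treating separately the two pieces in the definition of the extended invariant set: the closure of the flow-direction lines over regular orbits, and the projective center spaces $\PP^c_{\sigma,\Lambda}$ over singularities. First I would fix a $C^1$-neighborhood $\cU_0$ of $X$ small enough that every singularity of every $Y\in\cU_0$ in $U$ is the hyperbolic continuation of a singularity of $X$ (this uses that $U$ is compact and the singularities of $X$ in $U$ are hyperbolic and hence finite in number and isolated), and small enough that the maximal invariant set $\Lambda_{Y,U}$ depends upper semi-continuously on $Y$ — this last fact is the standard upper semi-continuity of the maximal invariant set in a fixed isolating compact set, and it handles the ``bulk'' of $B(Y,\Lambda_{Y,U})$ away from the singularities: if $L_n\in B(Y_n,\Lambda_{Y_n,U})$ with $Y_n\to Y$ and $L_n\to L$, and the base points $x_n$ of $L_n$ stay bounded away from $\Sing(Y)$, then $x_n$ converges to a regular point $x\in\Lambda_{Y,U}$ and $L=\RR Y(x)$ by continuity of $Y\mapsto Y(\cdot)$ in $C^1$.

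The substantive step is the behavior near a singularity $\sigma$: I must show that if $x_n\in\Lambda_{Y_n,U}$ converges to $\sigma$ (with $Y_n\to Y$, so $\sigma$ is a continuation singularity), then every accumulation line of $\RR Y_n(x_n)$ lies in $\PP^c_{\sigma,\Lambda_{Y,U}}$. The key point is the semi-continuity of the \emph{escaping stable/unstable spaces}. The escaping stable space $E^{ss}_{\sigma,\Lambda}$ is characterized by: it is the largest piece $E^s_k\oplus\cdots\oplus E^s_i$ of the finest dominated splitting whose tangent strong stable manifold meets $\Lambda$ only at $\sigma$. I would argue that for $Y\in\cU_0$ the escaping stable space of the continuation $\sigma_Y$ in $\Lambda_{Y,U}$ can only grow (contain at least as many sub-bundles) as $Y\to X$: this is exactly the content of Lemma~\ref{l.robust-escaping} applied to the partially hyperbolic splitting $T_\sigma M=E^{ss}_{\sigma,\Lambda}\oplus F$, which guarantees that the condition $W^{ss}(\sigma)\cap\Lambda=\{\sigma\}$ persists on a neighborhood. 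Dually the escaping unstable space also only grows. Since the center space is what remains after removing both escaping spaces, $E^c_{\sigma_Y,\Lambda_{Y,U}}\subset E^c_{\sigma,\Lambda_{X,U}}$ for $Y$ close to $X$ (up to the natural identification of the continuation splittings, which vary continuously). Now if $x_n\to\sigma$ with $x_n\in\Lambda_{Y_n,U}$, I would use the same escaping-behavior argument as in the Claim inside the proof of Theorem~\ref{t.Aprime}: the orbit of $x_n$ cannot accumulate a line in the escaping stable space, because such a line would be uniformly expanded in backward time by an invariant cone field, forcing the backward orbit of a nearby point to leave a neighborhood of $\sigma$ and land on $W^{ss}(\sigma)\cap\Lambda\setminus\{\sigma\}$, contradicting the defining property of the escaping stable space; dually the orbit cannot accumulate a line in the escaping unstable space. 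Hence every accumulation line of $\RR Y_n(x_n)$ avoids both escaping spaces, so by the dominated splitting it must lie in the projective center space $\PP^c_{\sigma,\Lambda_{Y,U}}\subset\PP^c_{\sigma,\Lambda_{X,U}}$.

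Combining the two cases: any sequence $L_n\in B(Y_n,\Lambda_{Y_n,U})$ with $Y_n\to X$ in $\cU_0$ has all its accumulation lines in $B(X,\Lambda_{X,U})$, which is precisely upper semi-continuity at $X$; since the same argument works at any $Y_0\in\cU_0$ (replacing $X$ by $Y_0$ and shrinking the neighborhood), we get upper semi-continuity on a full $C^1$-neighborhood $\cU$ of $X$. I expect the main obstacle to be the bookkeeping in the near-singularity step: one must be careful that the finest dominated splitting at the continuation singularity $\sigma_Y$ really does converge (sub-bundle by sub-bundle) to that of $\sigma$, and that the escaping index does not \emph{drop} in the limit — it is the monotonicity ``escaping spaces only grow under perturbation'' (equivalently, the center space only shrinks), coming from the robustness in Lemma~\ref{l.robust-escaping}, that makes the inclusion $\PP^c_{\sigma_Y,\Lambda_{Y,U}}\subset\PP^c_{\sigma,\Lambda_{X,U}}$ go in the direction needed for \emph{upper} semi-continuity. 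The cone-field escaping argument itself is a routine adaptation of the one already carried out in the proof of Theorem~\ref{t.Aprime}.
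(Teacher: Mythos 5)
This proposition is quoted by the paper from \cite{BdL} (Proposition 38) and no proof of it is given in the text, so there is no in-paper argument to compare yours with; judged on its own terms, your strategy is the natural one and uses exactly the toolkit this paper deploys for the analogous robustness statements: upper semi-continuity of the maximal invariant set in the fixed compact $U$ for the part of $B(Y,\Lambda_{Y,U})$ carried by regular points away from $\Sing$, Lemma~\ref{l.robust-escaping} to guarantee that the escaping stable (resp.\ unstable) space of the continuation $\sigma_Y$ contains the continuation of $E^{ss}_{\sigma,\Lambda_{X,U}}$ (resp.\ $E^{uu}_{\sigma,\Lambda_{X,U}}$), hence that the center space can only shrink, and the cone/domination argument of the Claim in the proof of Theorem~\ref{t.Aprime} for flow directions at points of $\Lambda_{Y_n,U}$ accumulating on a singularity. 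Two small points you leave implicit are fine but worth writing: the finest dominated splitting of $\sigma_Y$ refines the continuation of any dominated splitting of $\sigma$, so the continuation of the escaping space is indeed a candidate sum of bundles at $\sigma_Y$; and $\Lambda_{Y,U}$ is an invariant subset of the maximal invariant set of $Y$ in any neighborhood of $\Lambda_{X,U}$, which is what lets you invoke Lemma~\ref{l.robust-escaping}.

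The one step that does not hold as written is the final deduction near a singularity: ``every accumulation line avoids both escaping spaces, so by the dominated splitting it must lie in the projective center space'' is a non sequitur, since a line contained in neither $E^{ss}_{\sigma,\Lambda}$ nor $E^{uu}_{\sigma,\Lambda}$ may still have nonzero components in several bundles and lie outside $E^{c}_{\sigma,\Lambda}$. What the backward argument must give (and what the Claim in the proof of Theorem~\ref{t.Aprime} actually gives, with $K^-(\sigma)$ the projectivization of the bundle complementary to the escaping stable space) is the stronger containment: if a limit of $\RR Y_n(x_n)$ had a nonzero $E^{ss}_{\sigma,\Lambda}$-component, then by domination along a bounded backward piece of orbit (which stays near $\sigma$ since $x_n\to\sigma$) one finds points whose flow direction enters the strong stable cone, and the cone argument produces a point of $W^{ss}(\sigma)\cap\Lambda_{X,U}\setminus\{\sigma\}$, a contradiction; hence every accumulation line lies in the projectivization of $E^{c}_{\sigma,\Lambda}\oplus E^{uu}_{\sigma,\Lambda}$. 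The forward-time analogue gives containment in the projectivization of $E^{ss}_{\sigma,\Lambda}\oplus E^{c}_{\sigma,\Lambda}$, and intersecting the two yields $\PP^c_{\sigma,\Lambda_{X,U}}$. (Alternatively, note that the set of accumulation lines is compact and $(\wh\varphi_t)$-invariant, and use the linear hyperbolic dynamics at $\sigma$ to push any line with a component in an escaping space onto a line inside that space, reducing to the statement you proved.) With this correction your proof goes through.
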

 
 \subsection{Renormalization cocycle associated to a hyperbolic singularity}
 Let $X\in\cX^1(M)$ and let us consider the subset of $\cG^1$ defined by
 $$\widetilde{M}_X=\overline{\big\{\RR X(x): x\in M\setminus\sing(X)\big\}}\cup\bigcup_{\sigma\in\sing(X)} \cG^1(\sigma).$$
 A real-valued \emph{cocycle} over $(\wh\varphi_t)_{t\in\RR}$ is a continuous function $H:\widetilde{M}_X\times\RR\to (0,+\infty)$ such that 
 $$H(L, t+s)=H(L,t)\cdot H(\wh\varphi_t(L),s)\textrm{ for any $t,s\in\RR$ and $L\in \widetilde{M}_X$}.$$
 We will write $H(L,t)=h^t(L)$ and $H=(h^t)_{t\in\RR}$.
 \begin{Definition}~\label{Def:renormalization-cocycle}
 	Let $X\in\cX^1(M)$ and let $\sigma$ be a hyperbolic singularity.
 	A cocycle $(h^t)_{t\in\RR}$ over the flow $(\wh\varphi_t)_{t\in\RR}$ is a \emph{renormalization cocycle at $\sigma$} if 
 	\begin{itemize}
 		\item there exist  a neighborhood $U_\sigma$ of $\sigma$ and $C>1$ such that
 		for any $x\in U_\sigma$, $L\in\cG^1(x)\cap \widetilde M_X$ and $t\in\RR$ satisfying $\varphi_t(x)\in U_\sigma$, one has
 		$$C^{-1}<\frac{h^t(L)}{\|D\varphi_t|_L\|}<C;$$
 		\item  for any small neighborhood $W$ of $\sigma$, there exists $C_W>1$ such that for any $x\in M\setminus W$, $L\in\cG^1(x)\cap \widetilde M_X$
 		and $t\in\RR$  satisfying $\varphi_t(x)\in M\setminus W$, one has
 		$C_W^{-1}<h^t(L)<C_W.$
 	\end{itemize}
 \end{Definition}
 At any hyperbolic singularity $\sigma$, there exists a renormalization cocycle, and it is unique up to multiplication by a cocycle bounded away from $0$ and $+\infty$
 (see Theorem 1 in~\cite{BdL}).
 
 The following property appears in the Corollary 63 of~\cite{BdL} and justifies the renormalization by the cocycle $(h^t)$.
 \begin{Proposition}~\label{Prop:lorenz-cocycle}
 	Let $X\in\cX^1(M)$, let $\sigma$ be a Lorenz-like singularity with splitting $T_\sigma M=E^{s}\oplus E^c\oplus E^{uu}$
 	and let $\PP E^{cs}_\sigma$ denote the projective space of $E^s\oplus E^c$, so that the extended linear Poincar\'e flow admits a dominated splitting $\cN^s\oplus \cN^u$
 	with $\dim(\cN^s)=\dim(E^{s})$ over $\PP E^{cs}_\sigma$.
 	
 	If $(h^t)$ is a renormalization cocycle at $\sigma$, then the cocycle $(h^t\cdot\wh\Psi_t|_{\cN^s})$ contracts uniformly.
 \end{Proposition}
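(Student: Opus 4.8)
I will treat the statement as it stands: the set $\PP E^{cs}_\sigma$ sits inside the single fibre $\cG^1(\sigma)$, is invariant under $(\wh\varphi_t)$, and $(\wh\varphi_t)$ fixes the base point $\sigma$. So the assertion is that the linear cocycle $v\mapsto h^t(L)\cdot\wh\Psi_t(v)$ over the \emph{compact} set $\PP E^{cs}_\sigma$ contracts uniformly; in particular everything takes place at $\sigma$. The plan is to rewrite $\|D\varphi_t|_L\|\cdot\|\wh\Psi_t|_{\cN^s(L)}\|$ as the norm of $\wedge^2 D\varphi_t$ restricted to $\wedge^2(E^s\oplus E^c)$, to check that the Lorenz-like condition forces that exterior power to contract exponentially, and finally to replace $\|D\varphi_t|_L\|$ by $h^t(L)$ using the first defining property of the renormalization cocycle.

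First I would change the Riemannian metric near $\sigma$ so that $E^s\oplus E^c$ is orthogonal to $E^{uu}$. For $L=\RR u\in\PP E^{cs}_\sigma$ with $u$ a unit vector, one has $\cN(L)=L^\perp=\bigl(L^\perp\cap(E^s\oplus E^c)\bigr)\oplus E^{uu}$; since $D\varphi_t$ preserves $E^s\oplus E^c$ and $E^{uu}$ and maps $L$ into $E^s\oplus E^c\perp E^{uu}$, this decomposition is $(\wh\Psi_t)$-invariant, and a direct computation using the domination $E^s\oplus E^c\prec E^{uu}$ of the tangent flow shows that it is dominated of index $\dim(E^s)$. By uniqueness of the dominated splitting of a given index it coincides with $\cN^s\oplus\cN^u$, so $\cN^s(L)=L^\perp\cap(E^s\oplus E^c)$ and $\cN^u(L)=E^{uu}$. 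Next, for a unit $v\in\cN^s(L)$ one has $v\perp u$, and $\wh\Psi_t(v)$ is by definition the component of $D\varphi_t(v)$ orthogonal to $D\varphi_t(u)$; hence the area of the parallelogram spanned by $D\varphi_t(u)$ and $D\varphi_t(v)$ is $\|D\varphi_t(u)\|\cdot\|\wh\Psi_t(v)\|$, that is
$$\|D\varphi_t|_L\|\cdot\|\wh\Psi_t|_{\cN^s(L)}\|=\sup_{v}\bigl\|\wedge^2\! D\varphi_t\,(u\wedge v)\bigr\|\le\bigl\|\wedge^2\! D\varphi_t\big|_{\wedge^2(E^s\oplus E^c)}\bigr\|,$$
where the supremum is over unit $v\in\cN^s(L)$, and the inequality uses that $u\wedge v\in\wedge^2(E^s\oplus E^c)$ with $\|u\wedge v\|=1$ and that $D\varphi_t$ preserves $E^s\oplus E^c$.

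Now I would use that $\sigma$ is Lorenz-like. The eigenvalue moduli of $D\varphi_1|_{E^s\oplus E^c}$ are those of $D\varphi_1|_{E^s}$ together with the modulus $|\rho^c|$ of the eigenvalue along $E^c$, and by the domination $E^s\prec E^c$ the latter is the largest; therefore the spectral radius of $\wedge^2 D\varphi_1$ on $\wedge^2(E^s\oplus E^c)$ equals $\rho^s\cdot|\rho^c|$, where $\rho^s$ is the spectral radius of $D\varphi_1|_{E^s}$. The third item of Definition~\ref{Def:multi-singular-hyperbolic} (i.e.\ the Lorenz-like property, with $E^s$ in the role of $E^{ss}$) gives $\rho^s<\min(|\rho^c|,1/|\rho^c|)\le 1/|\rho^c|$, hence $\rho^s\cdot|\rho^c|<1$; consequently there exist $C'\ge 1$ and $\eta>0$ with $\|\wedge^2 D\varphi_t|_{\wedge^2(E^s\oplus E^c)}\|\le C'e^{-2\eta t}$ for all $t\ge0$. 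On the other hand, applying the first property in Definition~\ref{Def:renormalization-cocycle} with $x=\sigma$ — which is legitimate since $\varphi_t(\sigma)=\sigma\in U_\sigma$ for every $t$ — yields $C\ge1$ with $h^t(L)\le C\,\|D\varphi_t|_L\|$ for all $t\in\RR$ and all $L\in\cG^1(\sigma)$, in particular for $L\in\PP E^{cs}_\sigma$. Combining the two estimates,
$$\bigl\|h^t\cdot\wh\Psi_t|_{\cN^s(L)}\bigr\|=h^t(L)\cdot\|\wh\Psi_t|_{\cN^s(L)}\|\le C\,\|D\varphi_t|_L\|\cdot\|\wh\Psi_t|_{\cN^s(L)}\|\le CC'\,e^{-2\eta t}$$
for every $L\in\PP E^{cs}_\sigma$ and $t\ge0$, which is $<e^{-\eta t}$ once $t$ exceeds some $T>0$; this is precisely uniform contraction of $(h^t\cdot\wh\Psi_t|_{\cN^s})$ on $\PP E^{cs}_\sigma$.

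I do not expect a serious obstacle here: the argument is short once one notices that the statement is purely local at $\sigma$ and that $\|D\varphi_t|_L\|\cdot\|\wh\Psi_t|_{\cN^s}\|$ is an exterior-power norm. The only points that require a little care are the identification $\cN^s(L)=L^\perp\cap(E^s\oplus E^c)$ (via uniqueness of dominated splittings of a given index, after making the splitting of $T_\sigma M$ orthogonal) and the bookkeeping showing that the Lorenz-like inequality $\rho^s<\min(|\rho^c|,1/|\rho^c|)$ indeed forces $\rho^s\cdot|\rho^c|<1$, i.e.\ that the top Lyapunov exponent of $\wedge^2 D\varphi_1$ along $\wedge^2(E^s\oplus E^c)$ is negative.
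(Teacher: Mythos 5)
Your argument is correct, but note that the paper does not prove this proposition at all: it is imported verbatim from~\cite{BdL} (Corollary~63 there), where it comes out of the construction of the renormalization cocycle on the blown-up manifold. What you supply is therefore a genuinely independent, self-contained local proof, and it is a good one. The key observation --- that for a unit vector $u$ spanning $L$ and a unit $v\in\cN(L)$ one has $\|D\varphi_t(u)\|\cdot\|\wh\Psi_t(v)\|=\|\wedge^2 D\varphi_t(u\wedge v)\|$, so that $\|D\varphi_t|_L\|\cdot\|\wh\Psi_t|_{\cN^s(L)}\|$ is controlled by $\|\wedge^2 D\varphi_t|_{\wedge^2(E^s\oplus E^c)}\|$ --- reduces everything to the spectral inequality $\rho^s\cdot|\rho^c|<1$, which is exactly the Lorenz-like condition (in the form of item~(iii) of Definition~\ref{Def:multi-singular-hyperbolic}, equivalent to the $\lambda^s+\lambda^u$ condition of Section~\ref{s.preliminaries}.e in both cases, whether $E^c$ is contracting or expanding). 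Your identification $\cN^s(L)=L^\perp\cap(E^s\oplus E^c)$, $\cN^u(L)=E^{uu}$ via uniqueness of the index-$\dim(E^s)$ dominated splitting over the compact invariant set $\PP E^{cs}_\sigma$ is legitimate, as is the use of only the first defining property of the renormalization cocycle at $x=\sigma$ (so the argument applies to any renormalization cocycle, consistent with their uniqueness up to bounded factors). Two points deserve one explicit sentence each if this were to be written up: the change of metric making $E^s\oplus E^c$ orthogonal to $E^{uu}$ alters $\cN$ and $(\wh\Psi_t)$, but only by a conjugacy bounded together with its inverse, so uniform contraction is unaffected; and the passage from spectral radius $\rho^s|\rho^c|<1$ of $\wedge^2 D\varphi_1$ to the uniform bound $\|\wedge^2 D\varphi_t|_{\wedge^2(E^s\oplus E^c)}\|\le C'e^{-2\eta t}$ for all real $t\ge 0$ uses the boundedness of $\wedge^2 D\varphi_s$ for $s\in[0,1]$. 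Neither is a gap; the proof stands.
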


 \subsection{Bonatti-da Luz's definition}
 We can now recall the definition introducted in~\cite{BdL}.
 
 \begin{Definition}~\label{Def:multi-singular-hyperbolic-bonatti-da-luz}
 	Let $X\in\cX^1(M)$. An invariant compact set $\Lambda$ is \emph{multi-singular hyperbolic} (in the sense of Bonatti-da Luz) if:
 	\begin{enumerate}[i.]
 		\item The singularities $\sigma\in \Lambda$ are hyperbolic. We fix a renormalization cocycle $(h^t_\sigma)$ at each $\sigma$.
 		\item The extended linear Poincar\'e flow admits a dominated splitting $\cN^s\oplus \cN^u$
 		over $B(X,\Lambda)$.
 		\item There exists a subset $S_+\subset \sing(X)\cap \Lambda$ such that the cocycle $(h_+^t\cdot\wh\Psi_t|_{\cN^s})$ is uniformly contracting,
 		where $h_+^t=\prod_{\sigma\in S_+} h_\sigma^t$.
 		\item There exists a subset $S_-\subset \sing(X)\cap \Lambda$ such that the cocyle $(h_-^t\cdot\wh\Psi_t|_{\cN^u})$ is uniformly expanding,
 		where $h_-^t=\prod_{\sigma\in S_-} h_\sigma^t$.
 	\end{enumerate}
 	One says that $X$ is multi-singular hyperbolic in a compact set $U$ if the maximal invariant set of $X$ in $U$
 	is multi-singular hyperbolic.
 \end{Definition}
 \begin{Remark}
 	Under the assumption that $W^s(\sigma)\cap\Lambda\setminus\{\sigma\}\neq\emptyset$ and $W^u(\sigma)\cap\Lambda\setminus\{\sigma\}\neq\emptyset$ for all singularities $\sigma\in\Lambda$, the set $S_+$ (resp. $S_{-}$) has to coincide with the set of  singularities whose stable dimension is  $\dim(\cN^s)+1$ (resp. $\dim(\cN^s)$) (see the proof of  Proposition~\ref{p.bonatti-da-luz-implies-our}).
 \end{Remark}
 We then compare  Definitions~\ref{Def:multi-singular-hyperbolic} and~\ref{Def:multi-singular-hyperbolic-bonatti-da-luz}.
 We  first show that the first implies the second.
 
 \begin{theoremalph}~\label{thm.our-imply-bonatti-da-luz}
 	Let $X\in\cX^1(M)$ and let $\Lambda$ be an invariant compact set. If $\Lambda$ satisfies Definition~\ref{Def:multi-singular-hyperbolic}, then it satisfies Definition~\ref{Def:multi-singular-hyperbolic-bonatti-da-luz}.
 \end{theoremalph}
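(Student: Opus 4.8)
The plan is to show that each of the four items in Definition~\ref{Def:multi-singular-hyperbolic-bonatti-da-luz} follows from Definition~\ref{Def:multi-singular-hyperbolic}. Suppose $\Lambda$ satisfies Definition~\ref{Def:multi-singular-hyperbolic}, with singular dominated splitting $\cN^s\oplus\cN^u$, isolating neighborhood $V$ of $\Lambda\cap\sing(X)$, and constants $\eta,T>0$. Item (i) is immediate: the third clause of Definition~\ref{Def:multi-singular-hyperbolic} gives that every $\sigma\in\Lambda\cap\sing(X)$ is Lorenz-like, hence hyperbolic, so we may fix a renormalization cocycle $(h^t_\sigma)$ at each one (its existence and uniqueness up to bounded factors coming from \cite{BdL}, Theorem 1).

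For item (ii), the key observation is that the extended invariant set $B(X,\Lambda)$ is exactly the closure of the regular lines $\RR X(x)$, $x\in\Lambda\setminus\sing(X)$, together with the projective center spaces $\PP^c_{\sigma,\Lambda}$. The singular domination already gives a dominated splitting of the linear Poincar\'e flow over $\Lambda\setminus\sing(X)$; since the extended linear Poincar\'e flow is a continuous cocycle over the compact space $\cG^1$, this domination extends to the closure $\widetilde\Lambda:=\overline{\{\RR X(x):x\in\Lambda\setminus\sing(X)\}}$. It remains to see that the domination also holds over each $\PP^c_{\sigma,\Lambda}$ with the correct index. Here I would invoke the Remark following Definition~\ref{Def:multi-singular-hyperbolic} (together with Proposition~\ref{p.strong-stable-outside} applied at each singularity): the singular domination forces, at $\sigma$, a splitting $T_\sigma M=E^s_{\sigma,\Lambda}\oplus E^c_{\sigma,\Lambda}\oplus E^u_{\sigma,\Lambda}$ with $\dim(E^s_{\sigma,\Lambda})\le\dim(\cN^s)\le\dim(E^s_{\sigma,\Lambda})+\dim E^c_{\sigma,\Lambda}$, so that over $\PP^c_{\sigma,\Lambda}$ the bundle $\cN^s$ of the extended flow consists of the escaping-stable directions plus the part of the (one-dimensional, by the Lorenz-like condition) center inside $\cN^s$; the dominated splitting there is inherited from the dominated splitting of $D\varphi_t$ at $\sigma$. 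Assembling these pieces over the compact set $B(X,\Lambda)$ gives item (ii).

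For items (iii) and (iv) I take $S_+$ to be the set of singularities $\sigma\in\Lambda$ with $\dim(E^s(\sigma))=\dim(\cN^s)+1$ (equivalently, those whose center line $E^c_{\sigma,\Lambda}$ is stable), and $S_-$ the set with $\dim(E^s(\sigma))=\dim(\cN^s)$ (center line unstable); the Lorenz-like trichotomy guarantees every singularity lies in exactly one of these (or has empty center, the uniformly hyperbolic subcase, handled trivially). To prove $(h_+^t\cdot\wh\Psi_t|_{\cN^s})$ contracts uniformly, I would combine two estimates according to whether an orbit segment spends its time inside or outside $V$. Outside $V$, item (ii) of Definition~\ref{Def:multi-singular-hyperbolic} gives $\|\Psi_t|_{\cN^s}\|<e^{-\eta t}$ directly, while each $h_\sigma^t$ with $\sigma\in S_+$ is bounded away from $0$ and $+\infty$ (second clause of Definition~\ref{Def:renormalization-cocycle}), so the product is controlled. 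Near a singularity $\sigma\in S_+$, Proposition~\ref{Prop:lorenz-cocycle} says precisely that $(h_\sigma^t\cdot\wh\Psi_t|_{\cN^s})$ contracts uniformly over $\PP E^{cs}_\sigma$, and the factors $h_\tau^t$ for the other $\tau\in S_+$ are bounded on a neighborhood of $\sigma$; the cocycle identity then lets me concatenate an orbit segment as a bounded number of "outside" pieces and "near-$\sigma$" pieces with a uniform loss at the transitions, yielding a uniform exponential estimate for $(h_+^t\cdot\wh\Psi_t|_{\cN^s})$ over all of $B(X,\Lambda)$. Item (iv) is the symmetric argument with $\Psi_{-t}$, $\cN^u$ and $S_-$.

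The main obstacle I expect is item (ii): carefully identifying the bundle $\cN^s$ of the \emph{extended} linear Poincar\'e flow over each projective center space $\PP^c_{\sigma,\Lambda}$ and checking that the dominated splitting there has the right index and is compatible with the extension from the regular part. The orthogonality normalization of the splitting $E^{ss}\oplus E^c\oplus E^{uu}$ at each singularity (as used in Section~\ref{s.multi-singular-hyperbolic}) together with Proposition~\ref{p.strong-stable-outside} should make the index bookkeeping work, but this is the step requiring genuine care rather than routine quotation of earlier results.
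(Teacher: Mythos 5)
Your reduction of items (i)--(ii) is in line with the paper, but your treatment of items (iii)--(iv) contains a step that fails: the assignment of $S_+$ and $S_-$ is backwards. Take a singularity $\sigma\in\Lambda$ of the ``attracting center'' type, $T_\sigma M=E^{ss}\oplus E^c\oplus E^{uu}$ with $E^c$ contracted, $\dim(E^{ss})=\dim(\cN^s)$, and (the typical case when $W^u(\sigma)\cap\Lambda\setminus\{\sigma\}\neq\emptyset$) $W^{ss}(\sigma)\cap\Lambda=\{\sigma\}$, so that $\PP^c_{\sigma,\Lambda}=\PP(E^c\oplus E^{uu})\subset B(X,\Lambda)$. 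For an invariant line $L\subset E^{uu}$ (with the metric made orthogonal at $\sigma$) one has $E^c\subset\cN^u(L)$, hence $\wh\Psi_t|_{\cN^u(L)}$ has a uniformly contracted direction; item (iv) of Definition~\ref{Def:multi-singular-hyperbolic-bonatti-da-luz} can therefore only hold if $h_-^t(L)$ grows like $\|D\varphi_t|_L\|$ along this orbit, i.e.\ if $\sigma\in S_-$. With your choice ($\dim E^s(\sigma)=\dim\cN^s+1\Rightarrow\sigma\in S_+$), $h_-^t$ is bounded along the orbit of $L$ (all its factors are cocycles of singularities away from $\sigma$), so item (iv) fails at $L$; dually, the singularities of stable index $\dim\cN^s$ must be put in $S_+$ (at a line $L_s\subset E^{ss}$ the bundle $\cN^s(L_s)$ contains the weakly expanded center, and only $h^t_\sigma(L_s)\approx\|D\varphi_t|_{L_s}\|$ can compensate it), so your item (iii) fails there as well. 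The correct bookkeeping --- the one used in the paper's proof and forced by the proof of Proposition~\ref{p.bonatti-da-luz-implies-our} --- is the opposite of yours: $S_+$ consists of the singularities with $\PP^c_{\sigma,\Lambda}\subset\PP E^{cs}_\sigma$ (stable index $\dim\cN^s$), $S_-$ of those with $\PP^c_{\sigma,\Lambda}\subset\PP E^{cu}_\sigma$. A related consequence of the swap is that your appeal to Proposition~\ref{Prop:lorenz-cocycle} ``near $\sigma\in S_+$'' is aimed at the wrong set: for your $S_+$ singularities the directions $\RR X(x)$, $x\in\Lambda$ near $\sigma$, accumulate on $\PP E^{cu}_\sigma$, not on $\PP E^{cs}_\sigma$, so the contraction over $\PP E^{cs}_\sigma$ says nothing about the passage.

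Beyond this, your route for (iii)--(iv) is genuinely different from the paper's and its key step is only gestured at. The paper does not concatenate orbit segments: it verifies $\int(\log h_+^T+\log\|\wh\Psi_T|_{\cN^s}\|)\ud\wh\mu<0$ for every ergodic measure $\wh\mu$ on $B(X,\Lambda)$, handling measures on the sets $\PP^c_{\sigma,\Lambda}$ by Proposition~\ref{Prop:lorenz-cocycle}, and regular measures by a chain absent from your plan: Theorem~\ref{thm.robustness-of-multi-singular-hyperbolicity} gives the star property near $\Lambda$, the proof of Theorem 5.6 in~\cite{SGW} approximates regular ergodic measures by periodic orbits, Birkhoff's theorem makes $\int\log h_+^T$ vanish on periodic orbits, and Liao's Theorem~\ref{thm:fundamental-property} gives the uniform negative bound for $\int\log\|\Psi_T|_{\cN^s}\|$. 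Your direct scheme could in principle replace this (after fixing $S_\pm$): split a segment into a middle piece whose endpoints lie outside $V$ --- controlled by item (ii) of Definition~\ref{Def:multi-singular-hyperbolic} together with the second bullet of Definition~\ref{Def:renormalization-cocycle}, both of which only constrain the endpoints --- and two end pieces contained in $V$. But the end pieces are exactly where the content lies: you must prove that along a piece of $\Lambda$-orbit inside $V_\sigma$ the lines $\RR X(\varphi_s(x))$ stay in a small neighborhood of $\PP^c_{\sigma,\Lambda}$ (this uses the escaping condition, as in the compactification claim in the proof of Theorem~\ref{t.Aprime}), and then transfer the uniform contraction of the renormalized cocycle from the invariant set $\PP^c_{\sigma,\Lambda}$ to nearby non-invariant segments of $B(X,\Lambda)$ by a fixed-large-time continuity and submultiplicativity argument; ``continuity plus the cocycle identity'' as stated does not yet deliver this uniform estimate. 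So as written the proposal both makes a wrong choice that falsifies (iii)--(iv) and leaves its crucial near-singularity estimate unproved.
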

 \proof
 Let $\cN^s\oplus \cN^u$ be the dominated splitting for the linear Poincar\'e flow over $\Lambda\setminus \Sing(X)$ as in Definition~\ref{Def:multi-singular-hyperbolic}.
 It extends to the closure in $\cG^1$, hence to $\overline{\{\RR X(x): x\in \Lambda\setminus\sing(X) \}}$. Let $i$ be its index.
 Let us consider a singularity $\sigma\in\Lambda$, and let us assume that it has the splitting $T_\sigma M=E^{ss}\oplus E^c\oplus E^u$, with
 $\dim(E^{ss})=i$ and $\dim(E^c)=1$, and   $W^{ss}(\sigma)\cap \Lambda\setminus \{\sigma\}=\emptyset$   (the other cases  can be addressed analogously).
 Then $\PP^c_{\sigma,\Lambda}$ is contained in the projective space
 associated to $E^c\oplus E^u$. By Proposition~\ref{Prop:lorenz-cocycle}, there exists a dominated splitting of index $i$ for the extended normal flow over $\PP^c_{\sigma,\Lambda}$.
 Hence the dominated splitting extends to $B(X,\Lambda)$.
 
 We now check the item (iii) of Definition~\ref{Def:multi-singular-hyperbolic-bonatti-da-luz}
 (the item (iv) is checked analogously). The set $S_+$ is the set of singularities in $\Lambda$ with a dominated splitting
 $T_\sigma M=E^{s}\oplus E^c\oplus E^{uu}$ and let $(h_+^t)$ be the associated cocycle.
 In order to prove that $(h_+^t\cdot\wh\Psi_t|_{\cN^s})$ is uniformly contracting, we have to prove that for any ergodic probability $\wh\mu$
 on $B(X,\Lambda)$, there exists $T>0$ such that
 \begin{equation}\label{e.contraction}
 \int \log h_+^T+\log \|\wh\Psi_T|_{\cN^s}\|\ud\wh\mu< 0.
 \end{equation}
 Proposition~\ref{Prop:lorenz-cocycle} proves that it is the case for the measures supported on the invariant sets
 $\PP^c_{\sigma,\Lambda}\subset \PP E^{cs}_\sigma$ associated to singularities $\sigma\in S_+$.
 For singularities $\sigma\in S_-$, $\PP^c_{\sigma,\Lambda}$ is contained in the projective space of $\PP E^{cu}_\sigma$,
 above which the cocycle $(\wh\Psi_t|_{\cN^s})$ is uniformly contracting; since $(h_+^t)$ is bounded, the property~\eqref{e.contraction}
 holds for measures supported $\PP^c_{\sigma,\Lambda}$ also in this case.
 
 It remains to consider ergodic measures $\wh \mu$ which projects on a regular measure $\nu$ on $\Lambda$.
 For each $T>0$ we have
 \begin{eqnarray}~\label{equ:non-hyperbolic}
 \int\log h_+^T+\log \|\wh\Psi_T|_{\cN^s}\|\ud\wh\mu=\int \log h_+^T+\log \|\Psi_T|_{\cN^s}\|\ud\nu.
 \end{eqnarray}
 By Theorem~\ref{thm.robustness-of-multi-singular-hyperbolicity}, $X$ satisfies the star property on a neighborhood of $\Lambda$.
 By the proof of Theorem 5.6 in \cite{SGW}, each regular ergodic measure $\nu$ supported on $\Lambda$ is accumulated by periodic measures
 $\delta_{\gamma_n}$ supported on periodic orbits $\gamma_n$ contained in a small neighborhood of $\Lambda$.
 Hence, there exists a sequence of periodic orbits $\gamma_n$ such that $\delta_{\gamma_n}$ tends to $\nu.$
 Notice that the singular domination over $\Lambda$ can be extended continuously to the maximal invariant set of $X$ in $U$ by Theorem~\ref{p.robust-singular-domination}, which implies 
 \begin{equation}\label{e.equ:non-hyperbolic2}
 \int \log h_+^T+\log \|\Psi_T|_{\cN^s}\|\ud\nu=\lim_{n\rightarrow\infty}\int \log h_+^T+\log \|\Psi_T|_{\cN^s}\|\ud\delta_{\gamma_n}.
 \end{equation}
 On $\gamma_n$, the cocycle $(h_+^t)$ is bounded away from $0$ and $+\infty$. 
 Birkhoff ergodic theorem gives
 \begin{equation}\label{e.equ:non-hyperbolic3}
 \int \log h_+^T \ud\delta_{\gamma_n}=\int\lim_{k\rightarrow\infty}\frac{1}{k}\sum_{i=0}^{k-1}\log h_+^T(\varphi_{iT}(p))\ud\delta_{\gamma_n}
 = \int\lim_{k\rightarrow\infty}\frac{1}{k}h_+^{kT}(p)\ud\delta_{\gamma_n}
 =0.
 \end{equation}
 The star property and Theorem~\ref{thm:fundamental-property} give $T>0 $ and $\eta>0$ so that
 \begin{equation}\label{e.equ:non-hyperbolic4}
 \int \log \|\Psi_T|_{\cN^s}\| \ud\delta_{\gamma_n}= \int\lim_{k\rightarrow\infty}\frac{1}{k}\log\prod_{i=0}^{k-1}\|\Psi_T(\varphi_{iT}(p))|_{\cN^s}\|\ud\delta_{\gamma_n}
 \leq -\eta.
 \end{equation}
 The Equations~\eqref{equ:non-hyperbolic}, \eqref{e.equ:non-hyperbolic2}, \eqref{e.equ:non-hyperbolic3} and \eqref{e.equ:non-hyperbolic4}
 together imply~\eqref{e.contraction} for the regular measure $\wh \mu$.
 \endproof
 
 Then we show the converse under a mild condition.
 \begin{theoremalph}~\label{thm.bonatti-da-luz-implies-our}
 	Let $X\in\cX^1(M)$ and $\Lambda$ be an invariant compact set whose singularities $\sigma$ satisfy  $W^s(\sigma)\cap\Lambda\setminus\{\sigma\}\neq\emptyset$ and $W^u(\sigma)\cap\Lambda\setminus\{\sigma\}\neq\emptyset$.
 	If $\Lambda$ satisfies Definition~\ref{Def:multi-singular-hyperbolic-bonatti-da-luz}, it also satisfies Definition~\ref{Def:multi-singular-hyperbolic}.
 \end{theoremalph}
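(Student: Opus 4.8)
The plan is to recover, from the Bonatti--da Luz data, the three ingredients of Definition~\ref{Def:multi-singular-hyperbolic}. The dominated splitting $\cN^s\oplus\cN^u$ over $B(X,\Lambda)$ provided by item (ii) of Definition~\ref{Def:multi-singular-hyperbolic-bonatti-da-luz} restricts to a dominated splitting of the linear Poincar\'e flow over $\Lambda\setminus\Sing(X)$; set $i=\dim(\cN^s)$. This is the domination asked for in Definition~\ref{d.singular-domination}; it remains to produce the uniform estimates away from the singularities and the correct behaviour at the singularities.

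For the uniform hyperbolicity away from the singularities (item (ii) of Definition~\ref{Def:multi-singular-hyperbolic}), I would fix a compact isolating neighbourhood $V$ of $\Sing(X)\cap\Lambda$ and then fix, for each $\sigma\in\Sing(X)\cap\Lambda$, a neighbourhood $W_\sigma\subset V$ as in Definition~\ref{Def:renormalization-cocycle}; the second property of the renormalization cocycles then bounds $h^t_+$ and $h^t_-$ from below on every orbit segment with endpoints in $\Lambda\setminus V$. Since $(h^t_+\cdot\wh\Psi_t|_{\cN^s})$ is uniformly contracting over the compact invariant set $B(X,\Lambda)$ and $\RR X(x)\in B(X,\Lambda)$ for $x\in\Lambda\setminus V$, dividing by $h^t_+$ transfers this contraction to $\Psi_t|_{\cN^s}$ along such segments, and $\cN^u$ is handled symmetrically via item (iv). This step is essentially bookkeeping.

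The heart of the proof is the analysis at a fixed singularity $\sigma$. The key observation is that $\PP^c_{\sigma,\Lambda}$ lies entirely inside $\cG^1(\sigma)$, so by the first property of the renormalization cocycles $h^t_\sigma(L)$ is comparable to $\|D\varphi_t|_L\|$ there while each $h^t_\tau$ ($\tau\neq\sigma$) stays bounded; hence, restricted to $\PP^c_{\sigma,\Lambda}$, items (iii) and (iv) become statements about the tangent cocycle $D\varphi_t$ on $T_\sigma M$. Because $W^s(\sigma)$ and $W^u(\sigma)$ meet $\Lambda\setminus\{\sigma\}$, the escaping stable and unstable spaces are proper in $E^s(\sigma)$, $E^u(\sigma)$, so $E^c_{\sigma,\Lambda}$ meets the weakest stable and weakest unstable directions, which are fixed points of $\wh\varphi_t$ in $\PP^c_{\sigma,\Lambda}$. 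Reading the index-$i$ dominated splitting together with the contraction/expansion conditions at these fixed points, and along the $\wh\varphi_t$-orbits in $\PP^c_{\sigma,\Lambda}$ joining the weakest stable to the weakest unstable direction, I expect to obtain: $\ind(\sigma)\in\{i,i+1\}$; that $S_+$ and $S_-$ are forced to coincide with the two classes of singularities, distinguished by whether the one-dimensional centre direction is stable or unstable (equivalently by whether $\ind(\sigma)$ equals $i+1$ or $i$; cf. the Remark preceding the statement); that the escaping stable space has dimension $i$ when $\ind(\sigma)=i+1$ and the escaping unstable space has dimension $\dim(M)-1-i$ when $\ind(\sigma)=i$; and, using Proposition~\ref{p.strong-stable-outside}, that in the first case $E^s(\sigma)=E^{ss}\oplus E^c$ with $\dim(E^c)=1$ and $W^{ss}(\sigma)\cap\Lambda=\{\sigma\}$, and symmetrically in the second. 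This yields item (ii) of Definition~\ref{d.singular-domination} and the dominated splitting $T_\sigma M=E^{ss}\oplus E^c\oplus E^{uu}$ in item (iii) of Definition~\ref{Def:multi-singular-hyperbolic}. The Lorenz-like eigenvalue inequality should fall out of the same computation: along such an orbit in $\PP^c_{\sigma,\Lambda}$, one of $\cN^s,\cN^u$ carries a one-dimensional subbundle that near one endpoint is tangent to $E^c$, on which $\wh\Psi_t$ moves at the exponent of $E^c$, while the renormalization factor $h^t_{\pm}$ moves at the exponent of the weakest direction of opposite sign; the hyperbolicity of the renormalized cocycle forces the sum of these two exponents to have the correct sign, i.e. $\lambda^s+\lambda^u>0$ when $\ind(\sigma)=i+1$ and $\lambda^s+\lambda^u<0$ when $\ind(\sigma)=i$.

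The step I expect to be the main obstacle is exactly this singularity analysis: converting the abstract hyperbolicity of the renormalized cocycles over the projectivized centre bundle into the concrete data $\ind(\sigma)\in\{i,i+1\}$, the identification of $S_\pm$, the dimensions of the escaping spaces, and the disjointness $W^{ss}(\sigma)\cap\Lambda=\{\sigma\}$ (resp. $W^{uu}$). This requires a careful case analysis of how an index-$i$ dominated splitting can sit over $\PP^c_{\sigma,\Lambda}$ and over its fixed directions, ruling out in particular configurations in which the centre space $E^c_{\sigma,\Lambda}$ is too large; I would organise it around the behaviour at the two extreme fixed directions of $\PP^c_{\sigma,\Lambda}$ and apply Proposition~\ref{p.strong-stable-outside} systematically to exile the strong stable and unstable manifolds.
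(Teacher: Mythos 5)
Your proposal follows essentially the same route as the paper's proof: restrict the dominated splitting from $B(X,\Lambda)$, transfer the contraction/expansion of the renormalized cocycles to $\Psi_t$ on orbit segments with endpoints outside a neighborhood of $\Sing(X)\cap\Lambda$ using the boundedness of the cocycles there, and reduce the singularity analysis to the tangent cocycle via $h^t_\sigma(L)\asymp\|D\varphi_t|_L\|$ on $\cG^1(\sigma)$ together with the fact that the weak stable and unstable directions lie in $\PP^c_{\sigma,\Lambda}$. The case analysis you flag as the main obstacle is exactly what the paper isolates as Proposition~\ref{p.bonatti-da-luz-implies-our}: it applies Proposition~\ref{p.strong-stable-outside} to get $E^{ss}\oplus E^{cs}\oplus E^u$ and to exile $W^{ss}(\sigma)$ from $\Lambda$, and then evaluates the renormalized cocycle at invariant lines $L_u\subset E^u(\sigma)$ (forcing $\sigma\in S_-$) and $L_s\subset E^{cs}(\sigma)$ (forcing $\dim E^{cs}=1$ and $\lambda^c+\lambda^u>0$), which is precisely the mechanism you describe.
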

 We need an auxiliary result before proving Theorem~\ref{thm.bonatti-da-luz-implies-our}.
 \begin{Proposition}~\label{p.bonatti-da-luz-implies-our}
 	Let $X\in\cX^1(M)$ and $\Lambda$ be an invariant compact set satisfying Definition~\ref{Def:multi-singular-hyperbolic-bonatti-da-luz}.
 	Then any singularity $\sigma\in \Lambda$ such that $W^s(\sigma)\cap\Lambda\setminus\{\sigma\}\neq\emptyset$ and $W^u(\sigma)\cap\Lambda\setminus\{\sigma\}\neq\emptyset$
 	is Lorenz-like.  Moreover if $\sigma$ has the splitting $T_\sigma M=E^{ss}\oplus E^{cs}\oplus E^u$,
 	then $W^{ss}(\sigma)\cap \Lambda=\{\sigma\}$.
 \end{Proposition}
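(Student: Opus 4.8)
The plan is to combine the (robustness of the) Bonatti--da Luz definition with Proposition~\ref{p.robust-domination-imply-singular domination}, and then to read off the Lorenz-like property from the renormalization cocycles.

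\emph{Step 1: a singular dominated splitting at $\sigma$.} First I would note that Definition~\ref{Def:multi-singular-hyperbolic-bonatti-da-luz} is an open condition: by Proposition~38 of~\cite{BdL} the family $Y\mapsto B(Y,\Lambda_{U,X})$ is upper semi-continuous, a dominated splitting of the extended linear Poincar\'e flow over such a family is robust, and, the renormalization cocycles varying continuously with $Y$, conditions (iii)--(iv) persist as well. Restricting the dominated splitting over $B(X,\Lambda)$ to the regular part $\{\RR X(x):x\in\Lambda\setminus\sing(X)\}$ shows that $X$ admits a \emph{robust} dominated splitting of index $i:=\dim(\cN^s)$ for the linear Poincar\'e flow near $\Lambda$. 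Since the singularities of $\Lambda$ satisfy the assumption of Proposition~\ref{p.robust-domination-imply-singular domination}, that proposition yields a singular dominated splitting $\cN^s\oplus\cN^u$ of index $i$ on $\Lambda$. Up to reversing time we may assume $\ind(\sigma)\ge i+1$; then Definition~\ref{d.singular-domination}(ii) must hold in its first form at $\sigma$, i.e.\ $T_\sigma M=E^{ss}(\sigma)\oplus F$ with $E^{ss}(\sigma)$ uniformly contracted, $\dim E^{ss}(\sigma)=i$, and $W^{ss}(\sigma)\cap\Lambda=\{\sigma\}$ (the second form would force $W^u(\sigma)\cap\Lambda=\{\sigma\}$). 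In particular $E^{ss}(\sigma)\subsetneq E^s(\sigma)$, so $E^s(\sigma)=E^{ss}(\sigma)\oplus E^{cs}(\sigma)$ with $\dim E^{cs}(\sigma)=\ind(\sigma)-i\ge1$. This already settles the ``moreover'' part once we know $\dim E^{cs}(\sigma)=1$, since then $E^{ss}(\sigma)$ coincides with the strong stable bundle of the splitting $E^{ss}\oplus E^{cs}\oplus E^u$.

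\emph{Step 2: $\sigma$ is Lorenz-like.} By Section~\ref{ss.lorenz-like} it remains to prove $\dim E^{cs}(\sigma)=1$ and $\lambda^s+\lambda^u>0$, where $\lambda^s$ (resp. $\lambda^u$) is the largest negative (resp. smallest positive) Lyapunov exponent of $\sigma$. Choose the Riemannian metric so that the finest dominated splitting of $T_\sigma M$ is orthogonal. Because $W^{ss}(\sigma)\cap\Lambda=\{\sigma\}$ the escaping stable space $E^{ss}_{\sigma,\Lambda}$ contains $E^{ss}(\sigma)$, and because $W^s(\sigma)\cap\Lambda\ne\{\sigma\}$ it is a proper subspace of $E^s(\sigma)$; similarly $E^{uu}_{\sigma,\Lambda}\subsetneq E^u(\sigma)$. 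Hence $E^c_{\sigma,\Lambda}\cap E^s(\sigma)$ is a nonzero subspace of $E^{cs}(\sigma)$ containing the weakest stable bundle $E^s_1(\sigma)$, and $E^c_{\sigma,\Lambda}\cap E^u(\sigma)$ is nonzero and contains the weakest unstable bundle $E^u_1(\sigma)$; thus $\PP^c_{\sigma,\Lambda}\subseteq B(X,\Lambda)$ has positive dimension and contains the $\wh\varphi_t$-fixed lines spanned by these eigendirections together with the invariant projective segment $\PP\!\big(E^s_1(\sigma)\oplus E^u_1(\sigma)\big)$. The set $B(X,\Lambda)\cap\cG^1(\sigma)$ is a $\wh\varphi_t$-invariant compact subset of $\cG^1(\sigma)$ which meets $\PP E^u(\sigma)$ (take limits of $\RR X(\varphi_{-t}(z))$ along the orbit of a point $z\in W^u(\sigma)\cap\Lambda\setminus\{\sigma\}$), so Proposition~\ref{p.strong-stable-outside} applies inside $\cG^1(\sigma)$ and shows in particular that over $\PP^c_{\sigma,\Lambda}$ the bundle $\cN^s$ contains $E^{ss}(\sigma)$, while it controls which lines of $E^{ss}(\sigma)\oplus E^u(\sigma)$ can lie in $B(X,\Lambda)$. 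Along a $\wh\varphi_t$-fixed line $L\in\PP^c_{\sigma,\Lambda}$ the renormalization cocycle $h^t_\sigma$ is comparable to $\|D\varphi_t|_L\|$ while each $h^t_{\sigma'}$ with $\sigma'\ne\sigma$ stays bounded; feeding this into the uniform contraction of $(h^t_+\cdot\wh\Psi_t|_{\cN^s})$ in (iii) and the uniform expansion of $(h^t_-\cdot\wh\Psi_t|_{\cN^u})$ in (iv) turns these conditions into a finite list of inequalities among the Lyapunov exponents of $\sigma$. Spelled out at the fixed lines carried by $E^s_1(\sigma)$ and by $E^u_1(\sigma)$ (using that (iii)--(iv) leave no room for a contracting direction inside $\cN^u$ or an expanding one inside $\cN^s$ over these lines), these force $\dim E^{cs}(\sigma)=1$, hence $\ind(\sigma)=i+1$ and $\sigma\in S_+$, and then give exactly $\lambda^s+\lambda^u>0$ — equivalently the spectral-radii inequality of Definition~\ref{Def:multi-singular-hyperbolic}(iii). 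So $\sigma$ is Lorenz-like of the first type with $T_\sigma M=E^{ss}(\sigma)\oplus E^{cs}(\sigma)\oplus E^u(\sigma)$, and $W^{ss}(\sigma)\cap\Lambda=\{\sigma\}$ by Step~1. Reversing time covers the case $\ind(\sigma)\le i$ and produces the second type of Lorenz-like singularity.

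\emph{Expected main obstacle.} The delicate part is Step~2: controlling precisely how the dominated splitting $\cN^s\oplus\cN^u$ restricts over the (possibly higher-dimensional) set $\PP^c_{\sigma,\Lambda}$, deciding whether $\sigma$ belongs to $S_+$ or $S_-$, and extracting from the renormalized contraction/expansion both the sharp inequality $\lambda^s+\lambda^u>0$ and the bound $\dim E^{cs}(\sigma)=1$ (ruling out a second weak stable direction). Step~1 is comparatively soft, using only the openness of Definition~\ref{Def:multi-singular-hyperbolic-bonatti-da-luz} and Proposition~\ref{p.robust-domination-imply-singular domination}.
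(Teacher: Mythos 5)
Your Step 2 is where the statement actually lives, and it is not a proof: you assert that conditions (iii)--(iv) of Definition~\ref{Def:multi-singular-hyperbolic-bonatti-da-luz}, read at the fixed lines of $\PP^c_{\sigma,\Lambda}$, ``turn into a finite list of inequalities'' forcing $\dim E^{cs}(\sigma)=1$ and $\lambda^s+\lambda^u>0$, but you never exhibit the mechanism, and the one concrete assignment you make is wrong: when $\ind(\sigma)=i+1$ the singularity is forced into $S_-$, not $S_+$. Indeed, with an orthogonal metric, over a line $L_u\subset E^u(\sigma)$ lying in $\PP^c_{\sigma,\Lambda}$ the bundle $\cN^u(L_u)$ contains the contracted space $E^{cs}(\sigma)$, so the unrenormalized $\wh\Psi_t|_{\cN^u}$ cannot be uniformly expanding there; since every $h^t_{\sigma'}$ with $\sigma'\neq\sigma$ is bounded near $\sigma$, item (iv) forces $h^t_\sigma$ to enter $h^t_-$, i.e.\ $\sigma\in S_-$ (this is also what the Lorenz singularity does: contraction of $\cN^s$ needs no rescaling, expansion of $\cN^u$ does). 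Your parenthetical ``(iii)--(iv) leave no room for a contracting direction inside $\cN^u$'' is exactly what fails once the renormalization is present, and with $\sigma\in S_+$ item (iii) at those lines only compares $\lambda^u$ with the exponents of $E^{ss}$ (since $\cN^s=E^{ss}$ there), which says nothing about $\lambda^s$ and cannot yield either conclusion. The paper's argument is precisely the $S_-$ computation you skipped: a line $L_s\subset E^{cs}(\sigma)$ belongs to $\PP^c_{\sigma,\Lambda}$ (because $W^s(\sigma)\cap\Lambda\setminus\{\sigma\}\neq\emptyset$ while $E^{ss}$ escapes), the first item of Definition~\ref{Def:renormalization-cocycle} gives $h^t_\sigma(L_s)\asymp\|D\varphi_t|_{L_s}\|$, so item (iv) makes $\|D\varphi_t|_{L_s}\|\cdot\wh\Psi_t|_{\cN^u(L_s)}$ uniformly expanding; since $\cN^u(L_s)$ contains both $E^{cs}\cap L_s^{\perp}$ and $E^u$, this forces $E^{cs}=L_s$ to be one-dimensional and $\lambda^s+\lambda^u>0$. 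Without this step your proposal restates the goal.

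Step 1 is also shakier than you present it, and heavier than necessary. Proposition~\ref{p.robust-domination-imply-singular domination} assumes $W^s(\sigma')\cap\Lambda\setminus\{\sigma'\}\neq\emptyset$ and $W^u(\sigma')\cap\Lambda\setminus\{\sigma'\}\neq\emptyset$ at \emph{every} singularity of $\Lambda$, whereas the proposition you are proving assumes this only at the given $\sigma$; and the robustness of Definition~\ref{Def:multi-singular-hyperbolic-bonatti-da-luz} (in particular the persistence of (iii)--(iv), which relies on continuity of the renormalization cocycles) is not established in this paper --- only the upper semi-continuity of $Y\mapsto B(Y,\Lambda_{U,X})$ is quoted, which does give robustness of the domination (ii), the only part you actually need, but you should argue only that. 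The paper's proof avoids all perturbative input: it applies Proposition~\ref{p.strong-stable-outside} directly to the invariant compact set $\PP^c_{\sigma,\Lambda}\subset B(X,\Lambda)$ (which meets $\PP E^u(\sigma)$ because $W^u(\sigma)\cap\Lambda\setminus\{\sigma\}\neq\emptyset$), getting the splitting $E^{ss}\oplus E^{cs}$ from its first item and $W^{ss}(\sigma)\cap\Lambda=\{\sigma\}$ from its second item, since otherwise the linear space $E^c_{\sigma,\Lambda}$ would contain a line of $E^{ss}\oplus E^u$ outside $E^{ss}\cup E^u$.
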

 \proof
 Let $\cN^s\oplus \cN^u$ be the domination over $B(X,\Lambda)$ for the extended linear Poincar\'e flow  as in Definition~\ref{Def:multi-singular-hyperbolic-bonatti-da-luz}
 and let $\sigma\in U$ be a singularity as in the statement of the proposition.
 By assumption, the center space  $\PP^c(\sigma,U)$ contains lines  $L_s\subset E^s(\sigma)$ and $L_u\subset E^u(\sigma)$. Without loss of generality, one  assumes that $\dim(E^s(\sigma))>\dim(\cN^s)$. Applying  Proposition~\ref{p.strong-stable-outside} to the domination over $\PP^c(\sigma,U)$ for the extended linear Poincar\'e flow, there exists a dominated splitting $T_\sigma M=E^{ss}\oplus E^{cs}\oplus E^u$ for the tangent flow with $\dim(E^{ss})=\dim(\cN^s)$.
 
 We claim that $W^{ss}(\sigma)\cap\Lambda=\{\sigma\}$.
 If one assumes by contradiction that this does not hold,
 there exists a line $L^s\subset E^{ss}$ which belongs to $\PP^c_{\sigma,\Lambda}$, consequently, there exists
 a line in $E^{ss}\oplus E^u$ which is not contained in $E^{ss}\cup E^u$ and belongs to $\PP^c_{\sigma,\Lambda}\subset B(X,\Lambda)$.
 This contradicts the second item of Proposition~\ref{p.strong-stable-outside}.
 
 In particular $L_s\subset E^{cs}(\sigma)$.
 Up to changing the metric, one can assume that the splitting $T_\sigma M=E^{ss}\oplus E^{cs}\oplus E^u$ is orthogonal.
 Then $E^{cs}(\sigma)\subset \cN^u(L_u)$. In order to satisfy the item (iv) of Definition~\ref{Def:multi-singular-hyperbolic-bonatti-da-luz},
 the bundle $\cN^u$ over  $\PP^c_{\sigma,\Lambda}$ has to be renormalized by the cocycle $(h^t_\sigma)$,
 proving that $\sigma$ belongs to $S_-$.
 
 From the first item of Definition~\ref{Def:renormalization-cocycle} and item (iv) in Definition~\ref{Def:multi-singular-hyperbolic-bonatti-da-luz},
 $(\|D\varphi_t|_{L_s}\|\cdot \wh\Psi_{t}|_{\cN^u(L_s)})$ is uniformly expanding along the orbit of $L_s\in \PP^c_{\sigma,\Lambda}$.
 This implies that $E^{cs}=L_s$ is one-di\-men\-sional and that the Lyapunov exponent $\lambda^c$ along $E^{cs}$ and the smallest Lyapunov exponent $\lambda^u$ along $E^u$
 satisfy $\lambda^c+\lambda^u>0$. Hence $\sigma$ is Lorenz-like.
 \endproof
 
 \proof[Proof of Theorem~\ref{thm.bonatti-da-luz-implies-our}]
 Let $\cN^s\oplus \cN^u$ be the dominated splitting given by Definition~\ref{Def:multi-singular-hyperbolic-bonatti-da-luz} and let $i$ be its index.
 By  Proposition~\ref{p.bonatti-da-luz-implies-our}, each singularity in $\Lambda$ has a dominated splitting
 $T_\sigma M=E^{ss}\oplus E^c\oplus E^{uu}$ with $\dim(E^c)=1$, $\dim(E^{ss})=i$
 and either $W^{ss}(\sigma)\cap \Lambda=\{\sigma\}$ or $W^{uu}(\sigma)\cap \Lambda=\{\sigma\}$.
 This proves that $\Lambda$ admits a singular domination of index $i$.
 
 For each singularity $\sigma\in \Lambda$, let $(h^t_\sigma)$ be the renormalization cocycle at $\sigma$ and consider a small closed neighborhood $V_\sigma$ of $\sigma$ such that
 \begin{itemize}
 	\item the maximal invariant set of $(\varphi_t)_{t\in\RR}$ in $V_\sigma$ is $\sigma$;
 	\item  $V_\sigma$ is contained in the neighborhood of $\sigma$ given in the first item of   Definition~\ref{Def:renormalization-cocycle}.
 \end{itemize}
 Taking the neighborhoods $V_\sigma$ small enough, one can assume that they are pairwise disjoint
 and let $C(\sigma)>1$ be the constant associated to $V_\sigma$ by the  second item in Definition~\ref{Def:renormalization-cocycle} .   
 Take $$C=  \prod_{\sigma\in \sing(X)\cap \Lambda} C(\sigma).$$
 By Definition~\ref{Def:multi-singular-hyperbolic-bonatti-da-luz}, there exists $\eta>0$ such that for each $L\in B(X,\Lambda)$, one has for any $t$ large
 $$\|h_+^t(L)\cdot \wh\Psi_t|_{\cN^s(L)}\|<e^{-2\eta t}\textrm { and } \|h_-^{-t}\cdot \wh\Psi_{-t}|_{\cN^u(L)}\|<e^{-2\eta t}.$$
 Fix $T$ large such that $C\cdot e^{-\eta T}<1.$ Now, for any $x\in\Lambda$ and $t>T$ such that $x$ and $\varphi_t(x)$ are disjoint from $V:=\cup_{\sigma\in\sing(X)\cap U} V_\sigma$, denoting $L=\RR X(x)$, then one has
 $$
 \|\Psi_t|_{\cN^s(x)}\|
 <C\cdot \|h_+^t(L)\cdot \wh\Psi_t|_{\cN^s(L)}\|\leq e^{-\eta t}.
 $$
 Similarly, one can show $\|\Psi_{-t}|_{\cN^u(x)}\|<e^{-\eta t}.$
 \endproof

\noindent{\it Acknowledgments.}
We would like to thank Christian Bonatti, Lan Wen, Shaobo Gan and  Yi Shi  for useful comments and discussions. This paper was prepared during the stay of Jinhua Zhang at Universit\'e Paris-sud, and Jinhua Zhang would like to thank Universit\'e Paris-sud for hospitality.

\begin{tabular}{l l l}
	\emph{\normalsize Sylvain Crovisier}
	& \quad &
	\emph{\normalsize Adriana da Luz}
	\medskip\\
	
	\small Laboratoire de Math\'ematiques d'Orsay
	&& \small Beijing International Center for Mathematical Research\\
	\small CNRS - Universit\'e Paris-Sud
	&& \small Peking University\\
	\small Orsay 91405, France
	&& \small Peking, 100871, P.R. China\\
	\small \texttt{Sylvain.Crovisier@math.u-psud.fr}
	&& \small \texttt{adaluz@cmat.edu.uy}\\
	&& \small
	\bigskip\\
	\emph{\normalsize Dawei Yang}
	& \quad &
	\emph{\normalsize Jinhua Zhang}
	\medskip\\
	
	\small School of Mathematical Sciences
	&& \small School of Mathematical Sciences\\
\small Soochow University
&& \small Beihang University\\
\small Suzhou, 215006, P.R. China
&& \small Beijing 100191, P.R. China\\
\small \texttt{yangdw1981@gmail.com}
&&\small \texttt{zjh200889@gmail.com}\\
\small \texttt{yangdw@suda.edu.cn}
&&\small \texttt{jinhua$\_$zhang@buaa.edu.cn}
	
\end{tabular}

\end{document}